\theoremstyle{plain}
\newtheorem{thrm}{Theorem}[section]
\newtheorem{lmm}[thrm]{Lemma}
\newtheorem{prpstn}[thrm]{Proposition}
\theoremstyle{definition}
\newtheorem{dfntn}[thrm]{Definition}
\newtheorem{cnjctr}[thrm]{Conjecture}
\newtheorem{rmrk}[thrm]{Remark}
\theoremstyle{plain}
  \def\xR{\mathbb{R}}
\def\xCzero{{\rm C}^{0}}
\def\xCone{{\rm C}^{1}} 
\def\xCtwo{{\rm C}^{2}} 
\def\xCinfty{{\rm C}^{\infty}} 
\def\xCn#1{{\rm C}^#1}
\def\xHone{{\rm H}^{1}}
\def\xHtwo{{\rm H}^{2}}
\def\xLone{{\rm L}^{1}}
\def\xLtwo{{\rm L}^{2}} 
\def\xLinfty{{\rm L}^{\infty}} 
\def\xLn#1{{\rm L}^#1}
\def\xHmone{{\rm H}^{-1}} 
\def\xLoneFtwo{{\rm L}_{1,{\rm F}}^2} 
\newtheorem{lemma}{Lemma}[subsection]
\crefname{equation}{}{} 
\Crefname{equation}{}{} 
\title[The turnpike property in semilinear control] 
{The turnpike property in semilinear control}
\subjclass[2010]{Primary: 49N99; Secondary: 35K91.}
\keywords{Optimal control problems, long time behavior, the turnpike property, semilinear parabolic equations.}
\thanks{This project has received funding from the European Research Council (ERC) under the European Union’s Horizon 2020 research and innovation programme (grant agreement No 694126-DYCON).\\
	We acknowledge professor Enrique Zuazua for his helpful remarks on the manuscript. We thank the referees for their interesting comments.}
\begin{document}
	\maketitle
	
	\centerline{\scshape Dario Pighin}
	\medskip
	{\footnotesize
		\centerline{Departamento de Matem\'aticas, Universidad Aut\'onoma de Madrid}
		\centerline{28049 Madrid, Spain}
	} 
	\medskip
	{\footnotesize
		\centerline{Chair of Computational Mathematics, Fundaci\'on Deusto}
		\centerline{University of Deusto, 48007, Bilbao, Basque Country, Spain}
	} 

	\begin{abstract}
		An exponential turnpike property for a semilinear control problem is proved. The state-target is assumed to be small, whereas the initial datum
		can be arbitrary.
		
		Turnpike results are also obtained for large targets, requiring that the control acts everywhere. In this case, we prove the convergence of the infimum of the averaged time-evolution functional towards the steady one.
		
		Numerical simulations are performed.
	\end{abstract}
	
\section*{Introduction}

In this manuscript, the long time behaviour of semilinear optimal control problems as the time-horizon tends to infinity is analyzed. Our results are global, meaning that we do not require smallness of the initial datum for the governing state equation.

In \cite{PZ2}, A. Porretta and E. Zuazua studied turnpike property for control problems governed by a semilinear heat equation, with dissipative nonlinearity. In particular, \cite[Theorem 1]{PZ2} yields the existence of a solution to the optimality system fulfilling the turnpike property, under smallness conditions on the initial datum and the target. Our first goal is to
\begin{enumerate}
	\item prove that in fact the (exponential) turnpike property is satisfied by the optimal control and state;
	\item remove the smallness assumption on the initial datum.
\end{enumerate}
We keep the smallness assumption on the target. This leads to the smallness and uniqueness of the steady optima (see \cite[subsection 3.2]{PZ2}), whence existence and uniqueness of the turnpike follows. We also treat the case of large targets, under the added assumption that control acts everywhere. In this case, we prove a weak turnpike result, which stipulates that the averaged infimum of the time-evolution functional converges towards the steady one. We also provide an $\xLtwo$ bound of the time derivative of optimal states, uniformly in the time horizon.

Generally speaking, in turnpike theory a time-evolution optimal control problem is considered together with its steady version. The ``turnpike property'' is verified if the time-evolution optima remain close to the steady optima up to some thin initial and final boundary layers.

An extensive literature is available on this subject. A pioneer on the topic has been John von Neumann \cite{von1945model}. In econometrics turnpike phenomena have been widely investigated by several scholars including P. Samuelson and L.W. McKenzie \cite{Samuelson1,Samuelson1972,LS,mckenzie1963turnpike,mckenzie1976turnpike,carlson2012infinite,haurie1976optimal}. Long time behaviour of optimal control problems has been studied by P. Kokotovic and collaborators \cite{wilde1972dichotomy,anderson1987optimal}, by  R.T. Rockafellar \cite{rockafellar1973saddle} and by A. Rapaport and P. Cartigny \cite{rapaport2004turnpike,rapaport2005competition}. A.J. Zaslavski wrote a book \cite{zaslavski2006turnpike} on the subject. A turnpike-like asymptotic simplification has been obtained in the context of optimal design of the diffusivity matrix for the heat equation \cite{allaire2010long}. In the papers \cite{damm2014exponential,grune2016relation,grune2018turnpike,trelat2018integral}, the concept of (measure) turnpike is related to the dissipativity of the control problem.

Recent papers on long time behaviour of Mean Field games \cite{cardaliaguet2012long,cardaliaguet2013long,GPT} motivated new research on the topic. A special attention has been paid in providing an exponential estimate, as in the work \cite{porretta2013long} by A. Porretta and E. Zuazua, where linear quadratic control problems were considered. These results have later been extended in \cite{trelat2015turnpike,PZ2,zamorano2018turnpike,trelat2018steady,gruneexponential,grune2019sensitivity} to control problems governed by a nonlinear state equation and applied to optimal control of the Lotka-Volterra system \cite{ibanez2017optimal}. Recently, turnpike property has been studied around nonsteady trajectories \cite{trelat2018steady,faulwasser2019towards,Grune2018,TLA}. The turnpike property is intimately related to asymptotic behaviour of the Hamilton-Jacobi equation \cite{kouhkouh2018dynamic,THJ}.

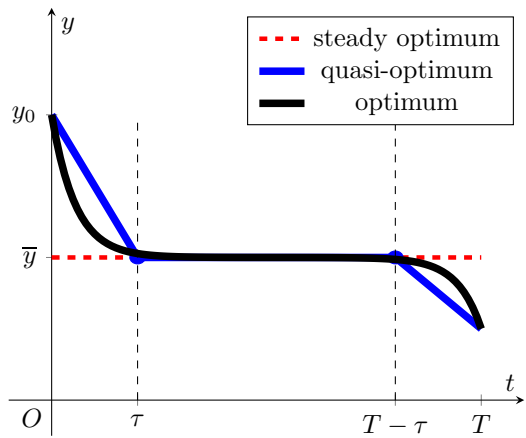
\begin{figure}
	\begin{center}
		\begin{tikzpicture}[
		declare function={
			func(\x)= (\x < 2) * (2*(2-\x)+4)   +
			and(\x >= 2, \x < 8) * (4)     +
			(\x >= 8) * (-(\x-8) + 4)
			;
		}
		]
		\begin{axis}[
		axis x line=middle, axis y line=middle,
		ymin=-1, ymax=11, ytick={4, 8}, yticklabels={$\overline{y}$, $y_0$}, ylabel=$y$,
		xmin=-1, xmax=11, xtick={2,8,10}, xticklabels={$\tau$,$T-\tau$,$T$}, xlabel=$t$,
		domain=0:10,samples=101, 
		]
		
		{\addplot [dashed, red,line width=0.06cm] {4};}
		\addlegendentry{steady optimum}
		{\addplot [blue,line width=0.1cm] {func(x)};}
		\addlegendentry{quasi-optimum}
		\draw [fill, blue] (30,50) circle [radius=1.8];
		\draw [dashed] (30,10) -- (30,90);
		\draw [fill, blue] (90,50) circle [radius=1.8];
		\draw [dashed] (90,10) -- (90,90);
		{\addplot [black,line width=0.1cm] {4*exp(-1.8*\x)-2*exp(-1.8*(10-\x))+4};}
		\addlegendentry{optimum}
		\node [below left, black] at (10,10) {$O$};
		\end{axis}
		\end{tikzpicture}
		\caption{quasi-optimal turnpike strategies}
		\label{quasi-optimal_intro}
	\end{center}
\end{figure}

Note that for a general optimal control problem, even in absence of a turnpike result, we can construct turnpike strategies (see \cite[Remark 7]{hernandez2017greedy}) as in \cref{quasi-optimal_intro}:
\begin{enumerate}
	\item in a short time interval $[0,\tau]$ drive the state from the initial configuration $y_0$ to a turnpike $\overline{y}$;
	\item in a long time arc $[\tau,T-\tau]$, remain on $\overline{y}$;
	\item in a short final arc $[T-\tau,T]$, use to control to match the required terminal condition at time $t=T$.
\end{enumerate}
In general, the corresponding control and state are not optimal, being not smooth. However, they are easy to construct.

The proof of turnpike results is harder than the above construction. In fact, to prove turnpike results, one has to ensure that there is not another time-evolving strategy which is significantly better than the above one. In case the turnpike property is verified, the above strategy is quasi-optimal.

\subsection*{Statement of the main results}

We consider the semilinear optimal control problem:
\begin{equation}\label{functional_slt}
\min_{u\in \xLtwo((0,T)\times \omega)}J_{T}(u)=\frac12 \int_0^T\int_{\omega} |u|^2 dxdt+\frac{\beta}{2}\int_0^T\int_{\omega_0} |y-z|^2 dxdt,
\end{equation}
where:
\begin{equation}\label{semilinear_internal_1_slt}
\begin{dcases}
y_t-\Delta y+f\left(y\right)=u\chi_{\omega}\hspace{2.8 cm} & \mbox{in} \hspace{0.10 cm}(0,T)\times\Omega\\
y=0  & \mbox{on}\hspace{0.10 cm} (0,T)\times \partial \Omega\\
y(0,x)=y_0(x)  & \mbox{in}\hspace{0.10 cm}  \Omega.
\end{dcases}
\end{equation}
As usual, $\Omega$ is a regular bounded open subset of $\xR^n$, with $n=1,2,3$. The nonlinearity $f$ is $\xCn{3}$ nondecreasing, with $f\left(0\right)=0$. The action of the control is localized by multiplication by $\chi_{\omega}$, characteristic function of the open subregion $\omega\subseteq \Omega$. The target $z$ is assumed to be in $\xLinfty(\omega_0)$. The well poosedeness and regularity properties of the state equation are studied in \Cref{appendixsec:sml_heat_well_posedeness}.
$\omega_0\subseteq \Omega$ is an open subset and $\beta\geq 0$ is a weighting parameter. As $\beta$ increases, the distance between the optimal state and the target decreases.

By the direct method in the calculus of variations \cite{ESC,troltzsch2010optimal}, there exists a global minimizer of \cref{functional_slt}. As we shall see, uniqueness can be guaranteed, provided that the initial datum and the target are small enough in the uniform norm.

Taking the G\^ateaux differential of the functional \cref{functional_slt} and imposing the Fermat stationary condition, we realize that any optimal control reads as $u^T=-q^T\chi_{\omega}$, where $\left(y^T,q^T\right)$ solves
\begin{equation}\label{semilinear_internal_parabolic_1}
\begin{dcases}
y^T_t-\Delta y^T+f(y^T)=-q^T\chi_{\omega}\hspace{2.8 cm} & \mbox{in} \hspace{0.10 cm}(0,T)\times\Omega\\
y^T=0  & \mbox{on}\hspace{0.10 cm} (0,T)\times \partial \Omega\\
y^T(0,x)=y_0(x)  & \mbox{in}\hspace{0.10 cm}  \Omega\\
-q^T_t-\Delta q^T+f^{\prime}(y^T)q^T=\beta(y^T-z)\chi_{\omega_0}\hspace{2.8 cm} & \mbox{in} \hspace{0.10 cm}(0,T)\times \Omega\\
q^T=0  & \mbox{on}\hspace{0.10 cm} (0,T)\times\partial \Omega\\
q^T(T,x)=0 & \mbox{in} \hspace{0.10 cm}\Omega.\\
\end{dcases}
\end{equation}

In order to study the turnpike, we need to study the steady version of \cref{semilinear_internal_1_slt}-\cref{functional_slt}:
\begin{equation}\label{steady_functional_slt}
\min_{u_s\in \xLtwo\left(\Omega\right)}J_{s}(u_s)=\frac12 \int_{\omega} |u_s|^2 dx+\frac{\beta}{2}\int_{\omega_0} |y_s-z|^2 dx,
\end{equation}
where:
\begin{equation}\label{semilinear_internal_elliptic_1_slt}
\begin{dcases}
-\Delta y_s+f(y_s)=u_s\chi_{\omega}\hspace{2.8 cm} & \mbox{in} \hspace{0.10 cm}\Omega\\
y_s=0  & \mbox{on}\hspace{0.10 cm} \partial \Omega.
\end{dcases}
\end{equation}
Under the same assumptions required for the problem \cref{semilinear_internal_1_slt}-\cref{functional_slt}, for any given control $u_s\in \xLtwo\left(\Omega\right)$, there exists a unique state $y_s\in \xHtwo\left(\Omega\right)\cap \xHone_0\left(\Omega\right)$ solution to \cref{semilinear_internal_elliptic_1_slt} (see e.g. \cite{boccardo2013elliptic}).

By adapting the techniques of \cite{ESC}, we have the existence of a global minimizer $\overline{u}$ for \cref{steady_functional_slt}. The corresponding optimal state is denoted by $\overline{y}$. If the target is sufficiently small in the uniform norm, the optimal control is unique (see \cite[subsection 3.2]{PZ2}). Furthermore any optimal control satisfies $\overline{u}=-\overline{q}\chi_{\omega}$, where the pair $\left(\overline{y},\overline{q}\right)$ solves the steady optimality system
\begin{equation}\label{semilinear_internal_elliptic_2}
\begin{dcases}
-\Delta \overline{y}+f(\overline{y})=-\overline{q}\chi_{\omega}\hspace{2.8 cm} & \mbox{in} \hspace{0.10 cm}\Omega\\
\overline{y}=0  & \mbox{on}\hspace{0.10 cm} \partial \Omega\\
-\Delta \overline{q}+f^{\prime}(\overline{y})\overline{q}=\beta(\overline{y}-z)\chi_{\omega_0}\hspace{2.8 cm} & \mbox{in} \hspace{0.10 cm}\Omega\\
\overline{q}=0  & \mbox{on}\hspace{0.10 cm} \partial \Omega.
\end{dcases}
\end{equation}

Consider the control problem \cref{semilinear_internal_elliptic_1_slt}-\cref{steady_functional_slt}. By \cite[section 3]{PZ2}, there exists $\delta >0$ such that if the \textbf{initial datum} and the target fulfill the \textbf{smallness condition}
\begin{equation}\label{smallness_condition}
\|y_0\|_{\xLinfty\left(\Omega\right)}\leq \delta \hspace{0.3 cm} \mbox{and} \hspace{0.3 cm} \|z\|_{\xLinfty\left(\omega_0\right)}\leq \delta,
\end{equation}
there exists a solution $\left(y^T,q^T\right)$ to the Optimality System
\begin{equation*}\label{semilinear_internal_parabolic_1_PZ2}
\begin{dcases}
y^T_t-\Delta y^T+f(y^T)=-q^T\chi_{\omega}\hspace{2.8 cm} & \mbox{in} \hspace{0.10 cm}(0,T)\times\Omega\\
y^T=0  & \mbox{on}\hspace{0.10 cm} (0,T)\times \partial \Omega\\
y^T(0,x)=y_0(x)  & \mbox{in}\hspace{0.10 cm}  \Omega\\
-q^T_t-\Delta q^T+f^{\prime}(y^T)q^T=\beta(y^T-z)\chi_{\omega_0}\hspace{2.8 cm} & \mbox{in} \hspace{0.10 cm}(0,T)\times \Omega\\
q^T=0  & \mbox{on}\hspace{0.10 cm} (0,T)\times\partial \Omega\\
q^T(T,x)=0 & \mbox{in} \hspace{0.10 cm}\Omega\\
\end{dcases}
\end{equation*}
satisfying for any $t\in [0,T]$
\begin{equation*}
\|q^T(t)-\overline{q}\|_{\xLinfty\left(\Omega\right)}+\|y^T(t)-\overline{y}\|_{\xLinfty\left(\Omega\right)}\leq K\left[\exp\left(-\mu t\right)+\exp\left(-\mu(T-t)\right)\right],
\end{equation*}
where $K$ and $\mu$ are $T$-independent.

In the aforementioned result, the turnpike property is satisfied by one solution to the optimality system. Since our problem may be not convex, we cannot directly assert that such solution of the optimality system is the unique minimizer (optimal control) for \cref{semilinear_internal_elliptic_1_slt}-\cref{steady_functional_slt}.


\subsubsection*{Large initial data and small targets}

We start by keeping the running target small, but allowing the initial datum for \cref{semilinear_internal_1_slt} to be large.

\begin{thrm}\label{th_slt_1}
	Consider the control problem \cref{semilinear_internal_1_slt}-\cref{functional_slt}, with nondecreasing nonlinearity $f$. Let $u^T$ be a minimizer of \cref{functional_slt}. For any $\varepsilon >0$, there exists $\rho_{\varepsilon}>0$ such that for every initial datum $y_0\in \xLinfty\left(\Omega\right)$ and target $z$ verifying
	\begin{equation}\label{smallness_target_any_initial_data}
	\|z\|_{\xLinfty\left(\omega_0\right)}\leq \rho_{\varepsilon},
	\end{equation}
	we have
	\begin{equation}\label{exp_turnpike_2}
	\|u^T(t)-\overline{u}\|_{\xLinfty\left(\omega\right)}+\|y^T(t)-\overline{y}\|_{\xLinfty\left(\Omega\right)}\leq K_{\varepsilon}\exp\left(-\mu t \right)+\varepsilon\exp\left(-\mu(T-t)\right),\hspace{0.6 cm}\forall t\in [0,T],
	\end{equation}
	for some $T$-independent constants $K_{\varepsilon}=K_{\varepsilon}\left(\Omega,\omega,\omega_0,\left\|y_0\right\|_{\xLinfty\left(\Omega\right)},\varepsilon\right)$ and $\mu=\mu\left(\Omega,\omega,\omega_0\right)>0$, for any choice of $R\geq \left\|y_0\right\|_{\xLinfty\left(\Omega\right)}$.
\end{thrm}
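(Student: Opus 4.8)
The plan is to reduce, after a bounded transient whose length depends only on $\|y_0\|_{\xLinfty(\Omega)}$, to the small-data regime covered by \cite[Theorem~1]{PZ2}, and then to patch the estimates. The engine of the reduction is a time‑averaged (``integral'') turnpike estimate, combined with the dissipativity furnished by the monotonicity of $f$. Throughout I will freely use the uniform-in-$T$ bound $\|y^T\|_{\xLinfty((0,T)\times\Omega)}+\|q^T\|_{\xLinfty((0,T)\times\Omega)}\le C(\|y_0\|_{\xLinfty},\|z\|_{\xLinfty})$ for $n\le 3$ (a preliminary a priori bound for optimal pairs), and the fact that for data below the threshold of \cite{PZ2} the optimality system has a unique solution, so that this solution is \emph{the} minimizer.

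\textbf{Upper bound on the optimal cost.} First I would take as a competitor the constant-in-time control $\widehat u(t,\cdot)\equiv\overline u$. Writing $w:=\widehat y-\overline y$ for the corresponding state minus the steady optimum, the monotonicity of $f$ and Poincar\'e give $\tfrac12\tfrac{d}{dt}\|w\|_{\xLtwo(\Omega)}^2+\lambda_1\|w\|_{\xLtwo(\Omega)}^2\le 0$, hence $\int_0^\infty\|w(t)\|_{\xLtwo(\Omega)}^2\,dt$ and $\int_0^\infty\|w(t)\|_{\xLtwo(\Omega)}\,dt$ are bounded by $C(\|y_0\|_{\xLtwo(\Omega)})$. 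Expanding $|\widehat y-z|^2=|w|^2+2w(\overline y-z)+|\overline y-z|^2$ on $\omega_0$ and using these two bounds yields $J_T(u^T)\le J_T(\widehat u)\le T\,J_s(\overline u)+C$, with $C$ independent of $T$.

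\textbf{Integral turnpike.} Set $Y:=y^T-\overline y$. Expanding the functional around the steady optimum gives $J_T(u^T)-T\,J_s(\overline u)=Q+L$, where $Q:=\tfrac12\int_0^T\!\!\int_\omega|u^T-\overline u|^2+\tfrac\beta2\int_0^T\!\!\int_{\omega_0}|Y|^2\ge0$ and $L$ is the cross term. The key is to estimate $L$ without producing a contribution that grows with $T$: I would test the equation for $Y$ against $\overline q\,\eta(t)$, with $\eta$ a cutoff equal to $1$ on $[0,T-1]$ and vanishing at $T$, and substitute $-\Delta\overline q=\beta(\overline y-z)\chi_{\omega_0}-f'(\overline y)\overline q$ from the steady optimality system \cref{semilinear_internal_elliptic_2}. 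The terminal value $Y(T)$ disappears thanks to the cutoff; what remains is $\int_\Omega\overline q\,Y(0)$ (bounded), a term supported on $[T-1,T]$ of size $\le C\|\overline q\|_{\xLinfty}Q^{1/2}$, and the Taylor remainder $\int_0^T\!\!\int_\Omega[f(y^T)-f(\overline y)-f'(\overline y)Y]\overline q\,\eta$, which by $|f''|\le C$ on the (uniformly bounded) range of the states is $\le C\|\overline q\|_{\xLinfty}\int_0^T\|Y\|_{\xLtwo(\Omega)}^2$. An independent energy estimate for $Y$ gives $\int_0^T\|Y\|_{\xLtwo(\Omega)}^2\le C(\|y_0\|)+CQ$ with $T$-independent constants. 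Provided $\|\overline q\|_{\xLinfty(\Omega)}$ — equivalently $\|z\|_{\xLinfty(\omega_0)}$ — is small, the terms proportional to $\|\overline q\|_{\xLinfty}Q$ are absorbed into $Q$, leaving $Q+\int_0^T\|Y\|_{\xLtwo(\Omega)}^2\le C(\|y_0\|_{\xLinfty},\|z\|_{\xLinfty})$ uniformly in $T$; in particular $\|y^T(T)-\overline y\|_{\xLtwo(\Omega)}$ is bounded uniformly in $T$. This is precisely the step that forces the smallness of the target and cannot be dropped.

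\textbf{Reduction and patching.} Since $\int_0^T\|y^T-\overline y\|_{\xLtwo(\Omega)}^2\le C(\|y_0\|_{\xLinfty})$, there is $t^\ast$, bounded by $T_1=T_1(\|y_0\|_{\xLinfty})$, at which $\|y^T(t^\ast)-\overline y\|_{\xLtwo(\Omega)}$ is below any prescribed threshold (at the price of enlarging $T_1$); parabolic smoothing on $[t^\ast,t^\ast+1]$, together with the uniform bounds and $n\le3$, upgrades this to $\|y^T(t^{\ast\ast})-\overline y\|_{\xLinfty(\Omega)}\le\delta$ at some $t^{\ast\ast}\le T_1+1$, with $\delta$ the threshold of \cite[Theorem~1]{PZ2}. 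By the dynamic programming principle, $u^T|_{[t^{\ast\ast},T]}$ minimizes the problem on $[t^{\ast\ast},T]$ with initial datum $y^T(t^{\ast\ast})$ and target $z$, both below $\delta$, so it coincides with the (unique) solution of the optimality system provided by \cite{PZ2}; hence $(y^T,q^T)$ obeys the exponential turnpike estimate on $[t^{\ast\ast},T]$, with initial-layer coefficient $\le K\delta$ and final-layer coefficient $\le K\|\overline q\|_{\xLinfty(\Omega)}\le KC\|z\|_{\xLinfty(\omega_0)}$ (the scaling of the final-layer constant with $\|q^T(T)-\overline q\|=\|\overline q\|$ being read off from the perturbative construction in \cite{PZ2}). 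Choosing $\rho_\varepsilon$ so that $KC\rho_\varepsilon\le\varepsilon$, replacing $e^{-\mu(t-t^{\ast\ast})}$ by $e^{\mu T_1}e^{-\mu t}$, and bounding $\|y^T(t)-\overline y\|_{\xLinfty(\Omega)}$ and $\|q^T(t)-\overline q\|_{\xLinfty(\Omega)}$ crudely by $C(\|y_0\|_{\xLinfty})\le C(\|y_0\|_{\xLinfty})e^{\mu T_1}e^{-\mu t}$ on $[0,t^{\ast\ast}]$, one obtains \cref{exp_turnpike_2}; the control estimate follows from $u^T=-q^T\chi_\omega$, $\overline u=-\overline q\chi_\omega$. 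All constants depend on $\|y_0\|_{\xLinfty}$ only through an upper bound, so $K_\varepsilon$ may be taken uniform for $\|y_0\|_{\xLinfty(\Omega)}\le R$, and the range $T\le T_1$ is trivial after enlarging $K_\varepsilon$.

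\textbf{Main obstacle.} The decisive difficulty is the uniform-in-$T$ integral turnpike bound in the absence of convexity: the cross term must be controlled with the terminal contribution neutralized by the time cutoff and the Taylor remainder absorbed using the smallness of $\|\overline q\|_{\xLinfty}$. Secondary technical points are the preliminary uniform-in-$T$ $\xLinfty$ bounds on $y^T$ and $q^T$ and the identification, within \cite{PZ2}, of the dependence of the final boundary-layer constant on the target, which is what ultimately produces the factor $\varepsilon$ in \cref{exp_turnpike_2}.
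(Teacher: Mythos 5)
Your overall architecture coincides with the paper's: a uniform-in-$T$ bound on the optimal pair, a proof that the optimal state enters a $\delta_\varepsilon$-neighbourhood of the turnpike within a time bounded independently of $T$, Bellman's principle plus the small-data local result on the remaining interval, and a crude exponential patch on the initial transient. Where you genuinely diverge is in the middle step. The paper (Lemma \ref{lemma_opt_est}) argues by cost comparison: while $\|y^T(t)\|_{\xLinfty\left(\Omega\right)}\geq\delta_\varepsilon$, Lemma \ref{lemma_reg_nonnegativepotential} forces a running cost of at least $K_1\delta_\varepsilon^2$ per unit time, the cost on $[t_s,T]$ is at least $(T-t_s)\inf J_s-K_2$, and since $\inf J_s\leq K_1\delta_\varepsilon^2/2$ for small targets the overshoot cannot be recovered, so $t_s$ is bounded. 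You instead derive an integral turnpike bound $\int_0^T\|y^T-\overline y\|_{\xLtwo\left(\Omega\right)}^2\,dt\leq C$ by expanding $J_T(u^T)-TJ_s(\overline u)=Q+L$ and controlling the cross term via the steady adjoint, absorbing the Taylor remainder using the smallness of $\|\overline q\|_{\xLinfty\left(\Omega\right)}$. This is a legitimate alternative that yields a quantitative estimate the paper never states explicitly.

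There are, however, two concrete gaps. First, your absorption step needs $C(R)\,\|\overline q\|_{\xLinfty\left(\Omega\right)}\leq 1/2$, where $C(R)$ involves $\max|f''|$ over the range of the uniformly bounded states; hence your smallness threshold on $\|z\|_{\xLinfty\left(\omega_0\right)}$ depends on $R\geq\|y_0\|_{\xLinfty\left(\Omega\right)}$. The theorem asserts $\rho_\varepsilon$ independent of the initial datum (the quantifier order is $\forall\varepsilon\ \exists\rho_\varepsilon\ \forall y_0$), and the paper achieves this because the only smallness requirement in Lemma \ref{lemma_opt_est} is $\inf J_s\leq K_1\delta_\varepsilon^2/2$ with $K_1=K_1(\Omega,\beta)$, the $R$-dependence being confined to $\tau_\varepsilon$ and $K_\varepsilon$. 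Second, the upgrade from $\|y^T(t^\ast)-\overline y\|_{\xLtwo\left(\Omega\right)}\leq\sigma$ to an $\xLinfty$ smallness at some $t^{\ast\ast}\in[t^\ast,t^\ast+1]$ by parabolic smoothing is not justified for $n=2,3$: the Duhamel term over that unit interval carries the forcing $(u^T-\overline u)\chi_\omega-\bigl(f(y^T)-f(\overline y)\bigr)$, which is only $O(1)$ in $\xLtwo$ there, so the smoothed solution need not be small in $\xLinfty$. One must either pigeonhole a unit interval on which the forcing is itself small and then interpolate (the naive estimate fails since $\int_0^1\tau^{-n/2}\,d\tau$ diverges for $n\geq2$), or, as the paper does, work directly with a bound on $y^T-\overline y$ in $\xLtwo$ of time with values in $\xLinfty\left(\Omega\right)$ via Lemma \ref{lemma_reg_nonnegativepotential}. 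Finally, the two facts you ``freely use'' --- the uniform $\xLinfty$ bound and the identification of the small-data optimality-system solution with the unique minimizer --- are not available in \cite{PZ2}; they are precisely Lemmas \ref{lemma_bound_optima} and \ref{lemma_3} of the paper (the latter proved via strict convexity of $J_T$ on a critical ball), and constitute a substantial part of the work being assessed.
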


Note that $\rho$ is smaller than the smallness parameter $\delta$ in \eqref{smallness_condition}. Furthermore, the smallness of the target yields the smallness of the final arc, when the state leaves the turnpike to match the final condition for the adjoint.

The main  ingredients our proofs require are:
\begin{enumerate}
	\item prove a $\xLinfty$ bound of the norm of the optimal control, uniform in the time horizon $T>0$ (Lemma \ref{lemma_bound_optima} in \cref{subsec:5.2.1});
	\item proof of the turnpike property for \textit{small data} and \textit{small targets}. Note that, in \cite[section 3]{PZ2}, the authors prove the existence of a solution to the optimality system enjoying the turnpike property. In this preliminary step, for \textit{small data} and \textit{small targets}, we prove that any optimal control verifies the turnpike property (Lemma \ref{lemma_3} in \cref{subsec:5.2.1});
	\item for \textit{small targets} and \textit{any data}, proof of the smallness of $\|y^T(t)\|_{\xLinfty\left(\Omega\right)}$ in time $t$ large (\cref{subsec:5.2.2}). This is done by estimating the critical time $t_s$ needed to approach the turnpike;
	\item conclude concatenating the two former steps (\cref{subsec:5.2.2}).
\end{enumerate}

\Cref{th_slt_1} ensures that the conclusion of \cite[section 3]{PZ2} holds for the optimal pair.

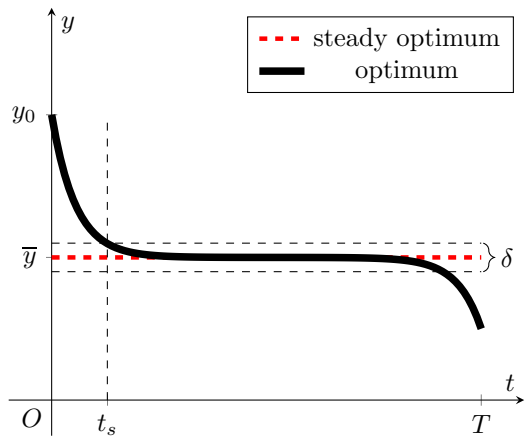
\begin{figure}
	\begin{center}
		\begin{tikzpicture}
		\begin{axis}[
		axis x line=middle, axis y line=middle,
		ymin=-1, ymax=11, ytick={4, 8}, yticklabels={$\overline{y}$, $y_0$}, ylabel=$y$,
		xmin=-1, xmax=11, xtick={1.30,10}, xticklabels={$t_s$,$T$}, xlabel=$t$,
		domain=0:10,samples=101, 
		]
		
		{\addplot [dashed, red,line width=0.06cm] {4};}
		\addlegendentry{steady optimum}
		{\addplot [black,line width=0.1cm] {4*exp(-1.8*\x)-2*exp(-1.8*(10-\x))+4};}
		\addlegendentry{optimum}
		\draw [dashed] (10,54) -- (110,54);
		\draw [dashed] (10,46) -- (110,46);
		\draw [dashed] (22.963,10) -- (22.963,90);
		\node [below left, black] at (10,10) {$O$};
		\draw [decorate,decoration={brace,amplitude=4.2pt,mirror,raise=4pt},yshift=0pt]
		(108,46) -- (108,54) node [black,midway,xshift=0.46cm] {$\delta$};
		\end{axis}
		\end{tikzpicture}
		\caption{global-local argument}\label{Global-local argument_intro}
	\end{center}
\end{figure}

Let us outline the proof of $\left(3\right)$ (\cref{Global-local argument_intro}), the existence of $\tau$ upper bound for the minimal time needed to approach the turnpike $t_s$.\\
Suppose, by contradiction, that the critical time $t_s$ to approach the turnpike is very large. Accordingly, the time-evolution optimal strategy obeys the following plan:
\begin{enumerate}
	\item stay away from the turnpike for long time;
	\item move close to the turnpike;
	\item enjoy a final time-evolution performance, cheaper than the steady one.
\end{enumerate}
Then, in phase 1, with respect to the steady performance, an extra cost is generated, which should be regained in phase 3. At this point, we realize that this is prevented by validity of the local turnpike property. Indeed, once the time-evolution optima approach the turnpike at some time $t_s$, the optimal pair satisfies the turnpike property for larger times $t\geq t_s$. Hence, for $t\geq t_s$, the time-evolution performance cannot be significantly cheaper than the steady one. Accordingly, we cannot regain the extra-cost generated in phase 1, so obtaining a contradiction.
\begin{rmrk}\label{remark_est_adj_state}
	All estimates we have obtained carry over to the adjoint state. This can be obtained by using the adjoint equation in \eqref{semilinear_internal_parabolic_1} and the equation satisfied by the difference $\varphi^T\coloneqq q^T-\overline{q}$
	\begin{equation*}
	\begin{cases}
	-\varphi^T_t-\Delta \varphi^T+f^{\prime}\left(\overline{y}\right)\varphi^T=\beta\chi_{\omega_0}\left(y^T-\overline{y}\right)+\left(f^{\prime}\left(\overline{y}\right)-f^{\prime}\left(y^T\right)\right)q^T\hspace{2.8 cm} & \mbox{in} \hspace{0.10 cm}(0,T)\times \Omega\\
	\varphi^T=0  & \mbox{on}\hspace{0.10 cm} (0,T)\times\partial \Omega\\
	\varphi^T(T,x)=-\overline{q}(x) & \mbox{in} \hspace{0.10 cm}\Omega.\\
	\end{cases}
	\end{equation*}
	Note that the aforementioned adjoint equations are stable since $f$ is increasing, whence $f^{\prime}\geq 0$.
\end{rmrk}

\begin{rmrk}\label{remark_est_stabilizable_system}
	In this manuscript we addressed a model case, where the state equation \eqref{semilinear_internal_1_slt} is stable with null control. Our analysis is applicable to more general stabilizable systems. Namely, it suffices the existence of a control such that the system stabilizes to zero. This can be seen by combining our techniques with turnpike theory for linear quadratic control \cite{porretta2013long,grune2019sensitivity}.
\end{rmrk}

\subsubsection*{Control acting everywhere: convergence of averages for arbitrary targets}

In \cref{sec:5.3} we deal with large targets, supposing the control acts everywhere (i.e. $\omega=\Omega$). We prove that the averages converge. Furthermore, we obtain an $\xLtwo$ bound for the time derivative of optimal states. The bound is uniform independent of the time horizon $T$, meaning that, if $T$ is large, the time derivative of the optimal state is small for most of the time.

\begin{thrm}\label{theorem_convergence_averages}
	Take an arbitrary initial datum $y_0\in \xLinfty\left(\Omega\right)$ and an arbitrary target $z\in \xLinfty(\omega_0)$. Consider the time-evolution control problem \cref{semilinear_internal_1_slt}-\cref{functional_slt} and its steady version \cref{semilinear_internal_elliptic_1_slt}-\cref{steady_functional_slt}. Assume the nonlinearity $f$ is nondecreasing and $\omega=\Omega$. Then, averages converge
	\begin{equation}\label{theorem_convergence_averages_eq1}
	\frac{1}{T}\inf_{\xLtwo((0,T)\times \Omega)} J_{T}\underset{T\to +\infty}{\longrightarrow}\inf_{\xLtwo\left(\Omega\right)}J_s.
	\end{equation}
	Suppose in addition $y_0\in \xLinfty\left(\Omega\right)\cap \xHone_0\left(\Omega\right)$. Let $u^T$ be an optimal control for \cref{semilinear_internal_1_slt}-\cref{functional_slt} and let $y^T$ be the corresponding state, solution to \cref{semilinear_internal_1_slt}, with control $u^T$ and initial datum $y_0$. Then, the $\xLtwo$ norm of the time derivative of the optimal state is bounded uniformly in $T$
	\begin{equation}\label{theorem_convergence_averages_eq2}
	\left\|y_t^T\right\|_{\xLtwo((0,T)\times \Omega)}\leq K,
	\end{equation}
	the constant $K$ being $T$-independent.
\end{thrm}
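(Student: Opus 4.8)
The plan is to sandwich $\tfrac1T\inf J_T$ between $\inf J_s-o(1)$ and $\inf J_s+o(1)$ by comparing the evolution problem with its steady version in the two directions, and then to read off \eqref{theorem_convergence_averages_eq2} from the sharp lower bound. The decisive remark, and the only place where $\omega=\Omega$ enters, is that at almost every fixed time $t$ the pair $\bigl(y^T(t),\,u^T(t)-y^T_t(t)\bigr)$ is an admissible pair for the \emph{stationary} control problem \eqref{semilinear_internal_elliptic_1_slt}--\eqref{steady_functional_slt}: indeed $y^T(t)\in \xHtwo(\Omega)\cap\xHone_0(\Omega)$ solves $-\Delta y^T(t)+f\bigl(y^T(t)\bigr)=u^T(t)-y^T_t(t)$ in $\Omega$, so $y^T(t)$ is exactly the steady state associated with the steady control $u^T(t)-y^T_t(t)$, whence $J_s\bigl(u^T(t)-y^T_t(t)\bigr)\ge \inf J_s$ for a.e.\ $t$. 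For the upper bound (valid for any $y_0\in\xLinfty(\Omega)$) I would use the time-independent steady optimal control $\overline u$ as a competitor in \eqref{semilinear_internal_1_slt}; call $\widehat y$ its state with datum $y_0$. Since $f$ is nondecreasing, the difference $\widehat y-\overline y$ solves a parabolic equation with nonnegative zero-order term, so an energy estimate together with Poincar\'e's inequality yields $\|\widehat y(t)-\overline y\|_{\xLtwo(\Omega)}\le e^{-\lambda_1 t}\|y_0-\overline y\|_{\xLtwo(\Omega)}$, $\lambda_1$ being the first Dirichlet eigenvalue of $-\Delta$ on $\Omega$. Expanding $\|\widehat y-z\|^2_{\xLtwo(\omega_0)}$ around $\overline y-z$ and integrating the exponentially small remainder, one gets $\inf J_T\le J_T(\overline u)\le T\,J_s(\overline u)+C=T\inf J_s+C$ with $C$ independent of $T$; hence $\limsup_{T\to\infty}\tfrac1T\inf J_T\le \inf J_s$.

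For the lower bound, first take $y_0\in\xLinfty(\Omega)\cap\xHone_0(\Omega)$, so that $y^T\in \xLtwo(0,T;\xHtwo(\Omega))\cap \xHone(0,T;\xLtwo(\Omega))$ with $f(y^T)\in\xLtwo((0,T)\times\Omega)$ (by \Cref{appendixsec:sml_heat_well_posedeness}), which legitimizes the slice-wise argument. Completing the square in $\tfrac12\int|u^T|^2$ and using $u^T-y^T_t=-\Delta y^T+f(y^T)$ together with the identity $\langle -\Delta y^T+f(y^T),\,y^T_t\rangle_{\xLtwo(\Omega)}=\tfrac{d}{dt}\mathcal E(y^T)$, where $\mathcal E(v):=\tfrac12\|\nabla v\|^2_{\xLtwo(\Omega)}+\int_\Omega F(v)$ and $F(s):=\int_0^s f\ge 0$, one rewrites
\begin{equation*}
J_T(u^T)=\int_0^T J_s\bigl(u^T(t)-y^T_t(t)\bigr)\,dt+\tfrac12\|y^T_t\|^2_{\xLtwo((0,T)\times\Omega)}+\mathcal E(y^T(T))-\mathcal E(y_0).
\end{equation*}
Since $J_s\bigl(u^T(t)-y^T_t(t)\bigr)\ge\inf J_s$ a.e.\ and $\mathcal E(y^T(T))\ge0$, this yields $J_T(u^T)\ge T\inf J_s-\mathcal E(y_0)$, with $\mathcal E(y_0)<\infty$ because $y_0\in\xHone_0(\Omega)\cap\xLinfty(\Omega)$; dividing by $T$ gives $\liminf_{T\to\infty}\tfrac1T\inf J_T\ge\inf J_s$, which together with the upper bound proves \eqref{theorem_convergence_averages_eq1} for such $y_0$. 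Combining instead the displayed identity with $J_T(u^T)=\inf J_T\le T\inf J_s+C$, using $\int_0^T J_s\bigl(u^T(t)-y^T_t(t)\bigr)\,dt\ge T\inf J_s$ and discarding $\mathcal E(y^T(T))\ge0$, the terms $T\inf J_s$ cancel and one is left with $\tfrac12\|y^T_t\|^2_{\xLtwo((0,T)\times\Omega)}\le C+\mathcal E(y_0)$, i.e.\ \eqref{theorem_convergence_averages_eq2}.

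To remove the $\xHone_0$ assumption in \eqref{theorem_convergence_averages_eq1}, I would establish a $T$-uniform continuous-dependence bound: for $y_0,\widetilde y_0\in\xLinfty(\Omega)$, inserting the optimal control for $\widetilde y_0$ into the problem with datum $y_0$, the difference of the two states decays like $e^{-\lambda_1 t}\|y_0-\widetilde y_0\|_{\xLtwo(\Omega)}$ (again by monotonicity of $f$), while each $\int_0^T\|\cdot-z\|^2_{\xLtwo(\omega_0)}$ is $O(T)$ by the upper bound above; Cauchy--Schwarz then gives $|\inf J_T(y_0)-\inf J_T(\widetilde y_0)|\le C(1+\sqrt T)\,\|y_0-\widetilde y_0\|_{\xLtwo(\Omega)}$ with $C$ independent of $T$. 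Choosing $\widetilde y_0=0\in\xHone_0(\Omega)\cap\xLinfty(\Omega)$ and dividing by $T$, $\tfrac1T\inf J_T(y_0)$ has the same limit as $\tfrac1T\inf J_T(0)$, namely $\inf J_s$ by the previous step.

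The argument is conceptually short; the points that need care are the justification of the slice-wise identity --- where $y_0\in\xHone_0(\Omega)$ and the $\xLinfty$ regularity of $y^T$ from \Cref{appendixsec:sml_heat_well_posedeness} are used, so that $-\Delta y^T(t)\in\xLtwo(\Omega)$ and $u^T(t)-y^T_t(t)\in\xLtwo(\Omega)$ for a.e.\ $t$ --- the monotonicity-based exponential stabilization in the upper bound, and the continuous-dependence step for arbitrary $\xLinfty$ data, which is the main (but still routine) technical obstacle.
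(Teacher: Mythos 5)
Your proof is correct and follows essentially the same route as the paper: your slice-wise identity is precisely the representation formula of Lemma \ref{lemma_functional rewritten} (resting on the PDE identity of Lemma \ref{lemma_parab_L2_equality}), your upper bound is Lemma \ref{lemma_upper_bound_value}, and your extraction of the bound on $y^T_t$ matches Lemma \ref{lemma_bound_value_and_statetimederivative}. The only variation is your continuous-dependence argument for removing the $\xHone_0\left(\Omega\right)$ assumption on the initial datum, where the paper instead invokes the regularizing effect of the state equation together with Lemma \ref{lemma_bound_optima}; both reductions are valid.
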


The proof of \Cref{theorem_convergence_averages}, available in \cref{sec:5.3}, is based on the following representation formula for the time-evolving functional (Lemma \ref{lemma_functional rewritten}):
\begin{align}\label{lemma_functional rewritten_eq1_intro}
J_{T}(u)&= \int_0^T J_s\big(-\Delta y(t,\cdot) +f\left(y(t,\cdot)\right)\big) dt\nonumber\\
&\;\hspace{0.33 cm}+\frac12 \int_0^T\int_{\Omega} \left|y_t(t,x)\right|^2 dxdt\nonumber\\
&\;\hspace{0.33 cm}	+\frac12\int_{\Omega}\left[\left\|\nabla y(T,x)\right\|^2+2F\left(y(T,x)\right)-\left\|\nabla y_0(x)\right\|^2-2F\left(y_0(x)\right)\right]dx,
\end{align}
where $F\left(y\right)\coloneqq \int_0^yf\left(\xi\right)d\xi$ and for a.e. $t\in (0,T)$, $J_s\big(-\Delta y(t,\cdot) +f\left(y(t,\cdot)\right)\big)$ denotes the evaluation of the steady functional $J_s$ at control $u_s(\cdot)\coloneqq -\Delta y(t,\cdot) +f\left(y(t,\cdot)\right)$ and $y$ is the state associated to control $u$ solving
\begin{equation}\label{semilinear_internal_6_slt_intro}
\begin{dcases}
y_t-\Delta y+f\left(y\right)=u\hspace{2.8 cm} & \mbox{in} \hspace{0.10 cm}(0,T)\times\Omega\\
y=0  & \mbox{on}\hspace{0.10 cm} (0,T)\times \partial \Omega\\
y(0,x)=y_0(x)  & \mbox{in}\hspace{0.10 cm}  \Omega.
\end{dcases}
\end{equation}
Note that the above formula is valid for initial data $y_0\in \xLinfty\left(\Omega\right)\cap \xHone_0\left(\Omega\right)$. However, by the regularizing effect of \cref{semilinear_internal_6_slt_intro} and the properties of the control problem, one can reduce to the case of smooth initial data.

By means of \cref{lemma_functional rewritten_eq1_intro}, the functional $J_T$ can be seen as the sum of three terms:
\begin{enumerate}
	\item $\int_0^T J_s\big(-\Delta y(t,\cdot) +f\left(y(t,\cdot)\right)\big) dt$, which stands for the ``steady'' cost at a.e. time $t\in (0,T)$ integrated over $(0,T)$;
	\item $\frac12 \int_0^T\int_{\Omega} \left|y_t(t,x)\right|^2 dxdt$, which penalizes the time derivative of the functional;
	\item $\frac12\int_{\Omega}\left[\left\|\nabla y(T,x)\right\|^2+2F\left(y(T,x)\right)-\left\|\nabla y_0(x)\right\|^2-2F\left(y_0(x)\right)\right]dx$, which depends on the terminal values of the state.
\end{enumerate}
Choose now an optimal control $u^T$ for \cref{semilinear_internal_1_slt}-\cref{functional_slt} and plug it in \cref{lemma_functional rewritten_eq1_intro}. By Lemma \ref{lemma_bound_optima}, the term\\
$\frac12\int_{\Omega}\left[\left\|\nabla y(T,x)\right\|^2+2F\left(y(T,x)\right)-\left\|\nabla y_0(x)\right\|^2-2F\left(y_0(x)\right)\right]dx$ can be estimated uniformly in the time horizon. At the optimal control, the term $\int_0^T\int_{\Omega} \left|y_t(t,x)\right|^2 dxdt$ has to be small and the ``steady'' cost\\
$\int_0^T J_s\big(-\Delta y(t,\cdot) +f\left(y(t,\cdot)\right)\big) dt$ is the dominant addendum. This is the basic idea of our approach to prove turnpike results for large targets.

The rest of the manuscript is organized as follows. In \cref{sec:5.2} we prove \Cref{th_slt_1}. In \cref{sec:5.3}, we prove \Cref{theorem_convergence_averages}. In \cref{sec:5.4} we perform some numerical simulations. The appendix is mainly devoted to the proof of the uniform bound of the optima (Lemma \ref{lemma_bound_optima}) and a PDE result needed for Lemma \ref{lemma_functional rewritten}.

\section{Proof of \Cref{th_slt_1}}
\label{sec:5.2}

\subsection{Preliminary Lemmas}
\label{subsec:5.2.1}

As announced, we firstly exhibit an upper bound of the norms of the optima in terms of the data. Note that the Lemma below yields a uniform bound for large targets as well.

\begin{lmm}\label{lemma_bound_optima}
	Consider the control problem \cref{semilinear_internal_1_slt}-\cref{functional_slt}. Let $R>0$, $y_0\in \xLinfty\left(\Omega\right)$ and $z\in \xLinfty(\omega_0)$, satisfying $\|y_0\|_{\xLinfty\left(\Omega\right)}\leq R$ and $\|z\|_{\xLinfty(\omega_0)}\leq R$. Let $u^T$ be an optimal control for \cref{semilinear_internal_1_slt}-\cref{functional_slt}. Then, $u^T$ and $y^T$ are bounded and
	\begin{equation}\label{lemma_bound_optima_estimate_1}
	\|u^T\|_{\xLinfty((0,T)\times\omega)}+\left\|y^T\right\|_{\xLinfty((0,T)\times\Omega)}\leq K\left[\|y_0\|_{\xLinfty\left(\Omega\right)}+\|z\|_{\xLinfty(\omega_0)}\right],
	\end{equation}
	where the constant $K$ is independent of the time horizon $T$, but it depends on $R$.
\end{lmm}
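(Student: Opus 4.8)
The plan is to compare the performance of the true optimum $u^T$ with a cheap, explicitly constructed competitor: namely the control that first steers the system from $y_0$ to the origin (or to a small neighborhood of it) in bounded time, and is zero thereafter. Since $u\equiv 0$ drives the state equation \eqref{semilinear_internal_1_slt} to zero exponentially (because $f$ is nondecreasing with $f(0)=0$, so the equation is dissipative), the associated cost of such a competitor is bounded by a constant depending only on $R$ (through $\|y_0\|_{\xLinfty(\Omega)}$ and $\|z\|_{\xLinfty(\omega_0)}$) and \emph{not} on $T$: the control part is a fixed finite-time cost, and the tracking part $\tfrac{\beta}{2}\int_0^T\int_{\omega_0}|y-z|^2$ stays bounded because $y$ decays exponentially to $0$ while $z$ is a fixed $\xLinfty$ target on a bounded domain with $\int_0^\infty e^{-2\lambda t}\,dt<\infty$ for the contribution of $y$, and the $\int |z|^2$ contribution on the tail is dominated by first moving $z$ out — more precisely one splits the target tracking term using the triangle inequality and absorbs the finite part. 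Wait, the $\int_0^T\int_{\omega_0}|z|^2$ piece actually grows linearly in $T$ unless $z$ is small, so this naive competitor does \emph{not} have $T$-independent cost for large $z$. The right competitor is instead the steady optimal control $\overline{u}$ applied for all time after a bounded transient: first steer $y_0$ to $\overline{y}$ in bounded time at cost $\le C(R)$, then apply $\overline{u}$ (equivalently stay at $\overline{y}$), which costs exactly $T\,J_s(\overline{u})$ over $[0,T]$ plus the bounded transient. This gives $J_T(u^T)\le J_T(\text{competitor})\le T\,J_s(\overline{u})+C(R)$.

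Next I would use this upper bound on $J_T(u^T)$ to extract $\xLtwo$ bounds on $u^T$ and then bootstrap to $\xLinfty$. From $J_T(u^T)\le T J_s(\overline u)+C(R)$ one does \emph{not} immediately get a $T$-uniform $\xLtwo((0,T)\times\omega)$ bound, since the dominant term is linear in $T$; rather one argues locally. The key is to exploit the optimality system \eqref{semilinear_internal_parabolic_1}: $u^T=-q^T\chi_\omega$, so bounding $u^T$ in $\xLinfty$ amounts to bounding $q^T(t)$ in $\xLinfty(\Omega)$ uniformly in $t$ and $T$. For the adjoint, the terminal condition $q^T(T)=0$ and the backward parabolic equation $-q^T_t-\Delta q^T+f'(y^T)q^T=\beta(y^T-z)\chi_{\omega_0}$ together with $f'\ge 0$ give, by the maximum principle / energy estimates run backward from $t=T$, a bound of the form $\|q^T(t)\|_{\xLinfty}\le C\sup_{s\in[t,T]}\|y^T(s)-z\|_{\xLinfty}$, i.e. $q^T$ is controlled by $y^T$. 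For the state, $y^T_t-\Delta y^T+f(y^T)=-q^T\chi_\omega$ with $f$ nondecreasing gives, again by comparison, $\|y^T(t)\|_{\xLinfty}\le \|y_0\|_{\xLinfty}+C\sup_{s\le t}\|q^T(s)\|_{\xLinfty}$. Closing this loop naively produces a circular estimate; the honest route, carried out in the appendix, is to first get $\xLtwo_{\mathrm{loc}}$ in time bounds on $(y^T,q^T)$ from the energy identity / the cost bound on time windows of fixed length (using that $J_T(u^T)$ restricted to any subinterval is controlled), then use parabolic smoothing ($\xLtwo\to\xLinfty$ via $\xLp$ estimates and Sobolev embedding in dimension $n\le 3$, exactly where $n=1,2,3$ is used) on each window, and finally a covering-of-$[0,T]$ argument to make the constant uniform in $t$ and $T$.

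The main obstacle is precisely breaking the apparent circularity between the bound on $y^T$ and the bound on $q^T=-u^T$, and doing so with constants independent of $T$. The dissipativity of the state equation ($f$ nondecreasing) is what saves us: it means a large control cannot be "stored" — its effect decays — so a large $q^T$ forces a large tracking cost that contradicts the near-optimality encoded in $J_T(u^T)\le TJ_s(\overline u)+C(R)$. Concretely, I expect the argument to run by contradiction on the size of $\|u^T\|_{\xLinfty}$ over a window, combined with the $\xLtwo\to\xLinfty$ parabolic regularity in $n\le 3$, and the linear-in-$T$ terms on both sides of the optimality inequality cancelling so that only the $C(R)$ remainder governs the bound. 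The dependence of $K$ on $R$ (and not just linear scaling) is unavoidable because $f$, though monotone, is only $\xCn 3$ and nonlinear, so the comparison/energy constants degrade with the size of the data through $\sup_{|\xi|\le CR}|f'(\xi)|$ and $\sup_{|\xi|\le CR}|f''(\xi)|$.
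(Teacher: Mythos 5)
Your opening move (the upper bound $J_T(u^T)\le T\,J_s(\overline{u})+C(R)$ via the steady competitor, after correctly discarding the null-control competitor) matches Lemma \ref{lemma_upper_bound_value}, and you correctly identify both the circularity between the bounds on $y^T$ and on $q^T=-u^T\chi_\omega$ and the need to work on time windows of fixed length with parabolic $\xLtwo\to\xLinfty$ smoothing (Lemmas \ref{lemma_reg_nonnegativepotential} and \ref{lemma_subinterval_Linf}). However, there is a genuine gap at the central step: you assert that ``$J_T(u^T)$ restricted to any subinterval is controlled,'' but this is precisely the nontrivial content of the lemma, not a consequence of the global bound. The inequality $J_T(u^T)\le T\,J_s(\overline{u})+C(R)$ only tells you that the cost on a fixed window is at most $T\,J_s(\overline{u})+C(R)$, which grows with $T$; to localize it you would need a matching lower bound on the cost over the complement of the window, and that is a turnpike-type statement you do not have at this stage. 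Your closing paragraph gestures at a contradiction argument and a cancellation of linear-in-$T$ terms, but at the global level this cancellation does not produce window-local bounds, so the loop is never actually closed.

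The missing ideas, which constitute the bulk of the paper's proof, are: (i) by Bellman's principle, $u^T$ restricted to $[t_1,t_2]$ is optimal for the fixed-endpoint problem \cref{semilinear_internal_2}--\cref{functional_specified_terminal_states} with prescribed terminal state $y^T(t_2)$; (ii) exact controllability to trajectories of the dissipative semilinear heat equation (Lemma \ref{lemma_controllab_semil}) is used to build a competitor for \emph{that} restricted problem --- null control for most of the window, then a steering control on a final subinterval of length $T_R$ --- whose cost is $K_\theta\left[\|y_0\|_{\xLinfty(\Omega)}^2+\|z\|_{\xLinfty(\omega_0)}^2\right]+\gamma\,(t_2-t_1)\,\|z\|_{\xLinfty(\omega_0)}^2$ with $\gamma$ independent of the threshold; (iii) the windows are classified as good or bad according to a threshold $\theta$, and the mean-value Lemma \ref{lemma_int_point} selects sampling times $t_{a,c},t_{b,d}$ in adjacent good windows where $\left(y^T,q^T\right)$ is pointwise controlled, so that the competitor's endpoints are admissible and cheap; (iv) on a bad window the very definition of the threshold gives a lower bound $\frac{\theta}{3T_R}(t_2-t_1-6T_R)\left[\|y_0\|^2_{\xLinfty(\Omega)}+\|z\|^2_{\xLinfty(\omega_0)}\right]$ for the same quantity, and choosing $\theta>6T_R\gamma$ makes the linear-in-$(t_2-t_1)$ terms cancel with a favorable sign, yielding the $T$-uniform window bound. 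Without steps (i)--(iv) the dissipativity heuristic (``a large control cannot be stored'') does not convert into an estimate, and the constant in your $y^T\leftrightarrow q^T$ comparison loop cannot be made small enough to close by iteration.
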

The proof is postponed to the Appendix.

The second ingredient for the proof of \Cref{th_slt_1} is the following Lemma.

\begin{lmm}\label{lemma_3}
	Consider the control problem \cref{semilinear_internal_1_slt}-\cref{functional_slt}. Let $y_0\in \xLinfty\left(\Omega\right)$ and $z\in \xLinfty(\omega_0)$. There exists $\delta>0$  such that, if
	\begin{equation}\label{smallness_target_datum_0}
	\|z\|_{\xLinfty\left(\omega_0\right)}\leq \delta\hspace{1 cm}\mbox{and}\hspace{1 cm}\|y_0\|_{\xLinfty\left(\Omega\right)}\leq \delta,
	\end{equation}
	the functional \cref{functional_slt} admits a unique global minimizer $u^T$. Furthermore, for every $\varepsilon >0$ there exists $\delta_{\varepsilon}>0$ such that, if
	\begin{equation}\label{smallness_target_datum_varepsilon}
	\|z\|_{\xLinfty\left(\omega_0\right)}\leq \delta_{\varepsilon}\hspace{1 cm}\mbox{and}\hspace{1 cm}\|y_0\|_{\xLinfty\left(\Omega\right)}\leq \delta_{\varepsilon},
	\end{equation}
	the functional \cref{functional_slt} admits a unique global minimizer $u^T$ and
	\begin{equation}\label{lemma_3_turnpike_estimate_varepsilon}
	\|u^T(t)-\overline{u}\|_{\xLinfty\left(\omega\right)}+\|y^T(t)-\overline{y}\|_{\xLinfty\left(\Omega\right)}\leq \varepsilon\left[\exp\left(-\mu t\right)+\exp\left(-\mu(T-t)\right)\right],\hspace{0.6 cm}\forall t\in [0,T],
	\end{equation}
	$(\overline{u},\overline{y})$ being the optimal pair for \cref{steady_functional_slt}. The constants $\delta_{\varepsilon}$ and $\mu>0$ are independent of the time horizon.
\end{lmm}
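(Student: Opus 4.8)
\emph{Plan.} A global minimizer $u^T$ exists by the direct method; the new content is that, for data and target small enough, this minimizer coincides with \emph{the} small solution of the optimality system \cref{semilinear_internal_parabolic_1} built in \cite[section~3]{PZ2}, which furnishes at once uniqueness and the turnpike estimate, plus a tracking of constants to reach the $\varepsilon$-version from \cref{smallness_target_datum_0}. The key remark is that \cite{PZ2} only produces \emph{a} solution of \cref{semilinear_internal_parabolic_1} with the turnpike behaviour, not a priori a minimizer, and that \cref{lemma_bound_optima} is what bridges this gap.

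\emph{Step 1 (every optimal pair is small, uniformly in $T$).} Apply \cref{lemma_bound_optima} with $R=1$ and restrict to $\delta\le1$: any optimal control and state satisfy $\|u^T\|_{\xLinfty((0,T)\times\omega)}+\|y^T\|_{\xLinfty((0,T)\times\Omega)}\le K_0\delta$, with $K_0$ independent of $T$. Writing $u^T=-q^T\chi_\omega$, the adjoint equation in \cref{semilinear_internal_parabolic_1} has right-hand side $\beta(y^T-z)\chi_{\omega_0}$ of size $\lesssim\delta$, zero terminal datum and nonnegative zero-order coefficient $f'(y^T)\ge0$; since the Dirichlet heat semigroup decays exponentially in $\xLinfty$, Duhamel's formula gives a $T$-independent bound $\|q^T\|_{\xLinfty((0,T)\times\Omega)}\le C_0\delta$ (also implicit in the proof of \cref{lemma_bound_optima}). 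Thus every optimal pair $(y^T,q^T)$ solves \cref{semilinear_internal_parabolic_1} and lies in the $\xLinfty$-ball of radius $(K_0+C_0)\delta$.

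\emph{Step 2 (identification).} By \cite[subsection~3.2, section~3]{PZ2}, for $\|z\|_{\xLinfty(\omega_0)}$ small the steady optima $\overline y,\overline q$ are unique and of size $\lesssim\|z\|_{\xLinfty(\omega_0)}$, and for data bounded by some $\delta_0$ the system \cref{semilinear_internal_parabolic_1} has a solution with
\begin{equation*}
\|q^T(t)-\overline q\|_{\xLinfty(\Omega)}+\|y^T(t)-\overline y\|_{\xLinfty(\Omega)}\le K\big[\exp(-\mu t)+\exp(-\mu(T-t))\big],\qquad t\in[0,T],
\end{equation*}
$\mu=\mu(\Omega,\omega,\omega_0)>0$, which is moreover the unique solution of \cref{semilinear_internal_parabolic_1} in a fixed small $\xLinfty$-ball (the contraction ball used to construct it; otherwise it is reproved by the contraction argument below). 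The difference $(y^T-\overline y,q^T-\overline q)$ solves the system of \cref{remark_est_adj_state}, with data $y_0-\overline y$ and $-\overline q$; being a small dissipative ($f'\ge0$) perturbation of the linear turnpike system, it obeys the estimate above with $K\le C_1(\|y_0\|_{\xLinfty(\Omega)}+\|z\|_{\xLinfty(\omega_0)})$. Now pick $\delta\le\delta_0$ small enough that $(K_0+C_0)\delta$ lies below the uniqueness radius. By Step~1 the minimizer's pair is in that ball and, by the Fermat optimality condition, solves \cref{semilinear_internal_parabolic_1}; hence it equals the solution above, so $u^T=-q^T\chi_\omega$ is uniquely determined (uniqueness of the minimizer) and, since $\overline u=-\overline q\chi_\omega$, the estimate for $\|u^T(t)-\overline u\|_{\xLinfty(\omega)}+\|y^T(t)-\overline y\|_{\xLinfty(\Omega)}$ holds with constant $K\le2C_1\delta$. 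Choosing $\delta_\varepsilon=\min\{\delta,\varepsilon/(2C_1)\}$ gives \cref{lemma_3_turnpike_estimate_varepsilon}; the uniqueness under \cref{smallness_target_datum_0} holds with this $\delta$.

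\emph{Main obstacle.} The delicate point is the uniqueness of small solutions of \cref{semilinear_internal_parabolic_1}, with smallness radius and (if the turnpike bound is rederived) constants \emph{independent of $T$}. One recasts \cref{semilinear_internal_parabolic_1} as a fixed point $(y^T,q^T)=\Phi_T(y^T,q^T)$ — solve the forward semilinear heat equation for $y^T$ from $q^T$, then the backward adjoint for $q^T$ from $y^T$ — and proves $\Phi_T$ is a contraction on a small ball of a space weighted by $e^{-\mu t}$ and $e^{-\mu(T-t)}$; the crucial ingredient is the $T$-uniform exponential dichotomy of the linearization around $(\overline y,\overline q)$, which rests on the stabilizability of \cref{semilinear_internal_1_slt} (here by the null control, $f'\ge0$) in the spirit of \cite{porretta2013long}, and lets one absorb the quadratic remainders $f(y^T)-f(\overline y)-f'(\overline y)(y^T-\overline y)$ and $(f'(\overline y)-f'(y^T))q^T$. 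A self-contained alternative for uniqueness alone: along $u_s=u_1^T+s(u_2^T-u_1^T)$ the second derivative of $J_T$ is $\|u_2^T-u_1^T\|_{\xLtwo((0,T)\times\omega)}^2+\beta\|\dot y\|_{\xLtwo((0,T)\times\omega_0)}^2+\beta\langle(y-z)\chi_{\omega_0},\ddot y\rangle$, and pairing the equation for $\ddot y$ with $\phi$ solving $-\phi_t-\Delta\phi+f'(y)\phi=(y-z)\chi_{\omega_0}$, $\phi(T)=0$, yields $|\langle(y-z)\chi_{\omega_0},\ddot y\rangle|\le\|f''\|_{\xLinfty}\|\phi\|_{\xLinfty}\|\dot y\|_{\xLtwo}^2$ with $\|\phi\|_{\xLinfty}\lesssim\delta$ and $\|\dot y\|_{\xLtwo((0,T)\times\Omega)}^2\lesssim\|u_2^T-u_1^T\|_{\xLtwo}^2$ uniformly in $T$ (dissipative energy estimate and Poincaré); for $\delta$ small this second derivative is strictly positive, forcing $u_1^T=u_2^T$.
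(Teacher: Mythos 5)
Your proposal is correct and follows essentially the same strategy as the paper: Lemma \ref{lemma_bound_optima} confines every minimizer to a small $\xLinfty$-ball, uniqueness of critical points of $J_T$ in that ball identifies the minimizer with the turnpike solution of \cite{PZ2}, and shrinking the data scales the turnpike constant down to $\varepsilon$. One caveat on your Step 2: the uniqueness you need is among \emph{all} solutions of \cref{semilinear_internal_parabolic_1} that are merely small in $\xLinfty$ --- not only those lying in the weighted contraction ball of the fixed-point construction, since the minimizer's pair is not a priori known to satisfy the exponential decay --- so the load-bearing argument is your ``self-contained alternative'' (strict positivity of $d^2J_T$ on the small ball, via the smallness of the adjoint state multiplying $f''$), which is exactly the paper's Step 1.
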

\begin{rmrk}\label{remark_Riccati}
	In \eqref{lemma_3_turnpike_estimate_varepsilon} the rate $\mu$ is given by
	\begin{eqnarray}\label{formulas_22_PZ2}
	\left\|\mathscr{E}(t)-\widehat{E}\right\|_{\mathscr{L}\left(\xLtwo\left(\Omega\right),\xLtwo\left(\Omega\right)\right)}&\leq&C\exp\left(-\mu t\right),\nonumber\\
	\left\|\exp\left(-t M\right)\right\|_{\mathscr{L}(\xLtwo\left(\Omega\right),\xLtwo\left(\Omega\right))}&\leq &\exp\left(-\mu t\right),\hspace{0.3 cm}M\coloneqq -\Delta +f^{\prime}\left(\overline{y}\right)+\widehat{E}\chi_{\omega}.
	\end{eqnarray}
	where $\mathscr{E}$ and $\widehat{E}$ denote respectively the differential and algebraic Riccati operators (see \cite[equation (22)]{PZ2}) and $\Delta : \xHone_0\left(\Omega\right)\longrightarrow \xHmone\left(\Omega\right)$ is the Dirichlet Laplacian.
\end{rmrk}
\begin{proof}[Proof of Lemma \ref{lemma_3}]
	Consider initial data $y_0$ and target $z$, such that $\left\|y_0\right\|_{\xLinfty\left(\Omega\right)}\leq 1$ and $\left\|z\right\|_{\xLinfty\left(\omega_0\right)}\leq 1$. We introduce the critical ball
	\begin{equation}\label{critical_ball_36}
	B\coloneqq \left\{u\in \xLinfty((0,T)\times {\omega}) \ \Big| \ \|u\|_{\xLinfty\left((0,T)\times \omega\right)}\leq K\left[\|y_0\|_{\xLinfty\left(\Omega\right)}+\|z\|_{\xLinfty\left(\omega_0\right)}\right] \right\},
	\end{equation}
	where $K$ is the constant appearing in \cref{lemma_bound_optima_estimate_1}.\\
	\textit{Step 1} \ \textbf{Strict convexity in $B$ for small data}\\
	By \cite[section 5]{casas2012second} or \cite{ESC}, the second order G\^ateaux differential of $J$ reads as
	\begin{eqnarray*}\label{sec_Frechet_differential}
		\langle d^2J_{T}(u)w,w\rangle&=&\int_0^T\int_{\omega} w^2 dxdt+\beta\int_0^T\int_{\omega_0}|\psi_w|^2 dxdt-\int_0^T\int_{\Omega}f^{\prime\prime}(y)q|\psi_w|^2 dxdt,
	\end{eqnarray*}
	where $y$ solves \cref{semilinear_internal_1_slt} with control $u$ and initial datum $y_0$, $\psi_w$ solves the linearized problem
	\begin{equation}\label{linearized_internal_1}
	\begin{dcases}
	(\psi_w)_t-\Delta \psi_w+f^{\prime}(y)\psi_w=w\chi_{\omega}\hspace{2.8 cm} & \mbox{in} \hspace{0.10 cm}(0,T)\times\Omega\\
	\psi_w=0  & \mbox{on}\hspace{0.10 cm} (0,T)\times \partial \Omega\\
	\psi_w(0,x)=0  & \mbox{in}\hspace{0.10 cm}  \Omega
	\end{dcases}
	\end{equation}
	and
	\begin{equation}\label{adjoint_parabolic_1}
	\begin{dcases}
	-q_t-\Delta q+f^{\prime}(y)q=\beta(y-z)\chi_{\omega_0}\hspace{2.8 cm} & \mbox{in} \hspace{0.10 cm}(0,T)\times \Omega\\
	q=0  & \mbox{on}\hspace{0.10 cm} (0,T)\times\partial \Omega\\
	q(T,x)=0 & \mbox{in} \hspace{0.10 cm}\Omega.\\
	\end{dcases}
	\end{equation}
	Since $f^{\prime}(y)\geq 0$,
	\begin{equation*}
	\|\psi_w\|_{\xLtwo((0,T)\times \Omega)}\leq C_0\|w\|_{\xLtwo((0,T)\times \omega)},
	\end{equation*}
	for some constant $C_0=C_0\left(\Omega\right)$.\\
	Let $u\in B$. By applying a comparison argument to \cref{semilinear_internal_1_slt} and \cref{adjoint_parabolic_1},
	\begin{equation*}
	\|y\|_{\xLinfty\left((0,T)\times \Omega\right)}+\|q\|_{\xLinfty\left((0,T)\times \Omega\right)}\leq C_1\left[\|y_0\|_{\xLinfty\left(\Omega\right)}+\|z\|_{\xLinfty\left(\omega_0\right)}\right],
	\end{equation*}
	with $C_1=C_1\left(\Omega,\beta\right)$. Hence,
	\begin{equation*}
	\int_0^T\int_{\Omega}\left|f^{\prime\prime}(y)q|\psi_w|^2\right| dxdt\leq C_0^2C_1C_2\left[\|y_0\|_{\xLinfty\left(\Omega\right)}+\|z\|_{\xLinfty\left(\omega_0\right)}\right]\int_0^T\int_{\omega} w^2 dxdt,
	\end{equation*}
	where $C_2\coloneqq \max_{\left[-2C_1,2C_1\right]}\left|f^{\prime\prime}\right|$ and we have used that $\left\|y_0\right\|_{\xLinfty\left(\Omega\right)}\leq 1$ and $\left\|z\right\|_{\xLinfty\left(\omega_0\right)}\leq 1$. Therefore
	\begin{equation*}
	\langle d^2J_{T}(u)w,w\rangle\geq\beta\int_0^T\int_{\omega_0}|\psi_w|^2 dxdt+\left\{ 1-C_0^2C_1C_2\left[\|y_0\|_{\xLinfty\left(\Omega\right)}+\|z\|_{\xLinfty\left(\omega_0\right)}\right]\right\}\int_0^T\int_{\omega}|w|^2 dxdt,
	\end{equation*}
	If $\|y_0\|_{\xLinfty\left(\Omega\right)}$ and $\|z\|_{\xLinfty\left(\omega_0\right)}$ are small enough, we have
	\begin{equation*}
	\langle d^2J_{T}(u)w,w\rangle\geq\frac12\int_0^T\int_{\omega}|w|^2 dxdt,
	\end{equation*}
	whence the strict convexity of $J$ in the critical ball $B$.
	Now, by \cref{lemma_bound_optima_estimate_1} and \cref{critical_ball_36}, if $\|y_0\|_{\xLinfty\left(\Omega\right)}$ and $\|z\|_{\xLinfty\left(\omega_0\right)}$ are small enough, any optimal control $u^T$ belongs to $B$. Then, there exists a unique solution to the optimality system, with control in the critical ball $B$ and such control coincides with $u^T$ the unique global minimizer of \cref{functional_slt}.\\
	\textit{Step 2} \ \textbf{Conclusion}\\ 
	First of all, by \cite[subsection 3.2]{PZ2}, if $\|z\|_{\xLinfty\left(\omega_0\right)}$ are small enough, the linearized optimality system satisfies the turnpike property. Now, let $\varepsilon >0$. We apply the fixed-point argument developed in the proof of \cite[Theorem 1 subsection 3.1]{PZ2} to the convex set
	\begin{equation}
	X\coloneqq \left\{\left(\eta, \varphi\right)\in \xLinfty\left(\left(0,T\right)\times \Omega\right)^2 \ \Big| \ \left\|\eta\left(t,\cdot\right)\right\|_{\xLinfty\left(\Omega\right)}+\left\|\eta\left(t,\cdot\right)\right\|_{\xLinfty\left(\Omega\right)}\leq \theta\left[\exp\left(-\mu t\right)+\exp\left(-\mu(T-t)\right)\right] \right\},
	\end{equation}
	for some $\theta \in \left(0, \varepsilon\right]$. Then, we can find $\delta_{\varepsilon}>0$ such that, if
	\begin{equation*}
	\|z\|_{\xLinfty\left(\omega_0\right)}\leq \delta_{\varepsilon}\hspace{1 cm}\mbox{and}\hspace{1 cm}\|y_0\|_{\xLinfty\left(\Omega\right)}\leq \delta_{\varepsilon},
	\end{equation*}
	there exists a solution $\left(y^T,q^T\right)$ to the optimality system such that
	\begin{equation*}
	\|u^T\|_{\xLinfty\left((0,T)\times \omega\right)}<\varepsilon
	\end{equation*}
	and
	\begin{equation*}
	\|u^T(t)-\overline{u}\|_{\xLinfty\left(\omega\right)}+\|y^T(t)-\overline{y}\|_{\xLinfty\left(\Omega\right)}\leq \varepsilon \left[\exp\left(-\mu t\right)+\exp\left(-\mu(T-t)\right)\right],\hspace{0.6 cm}\forall t\in [0,T].
	\end{equation*}
	By Step 1, if $\varepsilon$ is small enough, $u^T\coloneqq -q^T\chi_{\omega}$ is a strict global minimizer for $J_T$. Then, being strict, it is the unique one. This finishes the proof.
	
\end{proof}

In the following Lemma, we compare the value of the time evolution functional \cref{functional_slt} at a control $u$, with the value of the steady functional \cref{steady_functional_slt} at control $\overline{u}$, supposing that $u$ and $\overline{u}$ satisfy a turnpike-like estimate.

\begin{lmm}\label{lemma_estimate_functional_slt}
	Consider the time-evolution control problem \cref{semilinear_internal_1_slt}-\cref{functional_slt} and its steady version \cref{semilinear_internal_elliptic_1_slt}-\cref{steady_functional_slt}. Fix $y_0\in \xLtwo\left(\Omega\right)$ an initial datum and $z\in \xLtwo(\omega_0)$ a target. Let $\overline{u}\in \xLinfty\left(\Omega\right)$ be a control and let $\overline{y}$ be the corresponding solution to \cref{semilinear_internal_elliptic_1_slt}. Let $u\in \xLinfty((0,T)\times \omega)$ be a control and $y$ the solution to \cref{semilinear_internal_1_slt}, with control $u$. Assume
	\begin{equation}\label{lemma_estimate_functional_turnpike_estimate}
	\|u(t)-\overline{u}\|_{\xLinfty\left(\Omega\right)}+\|y(t)-\overline{y}\|_{\xLinfty\left(\Omega\right)}\leq K\left[\exp\left(-\mu t\right)+\exp\left(-\mu(T-t)\right)\right],\hspace{0.6 cm}\forall t\in [0,T],
	\end{equation}
	with $K=K(\Omega,\beta,y_0)$ and $\mu=\mu(\Omega,\beta)$.
	Then,
	\begin{equation}\label{values_ineq}
	\left| J_{T}(u)-TJ_s\left(\overline{u}\right)\right|\leq C\left[1+\|\overline{u}\|_{\xLinfty\left(\omega\right)}+\|z\|_{\xLinfty\left(\omega_0\right)}\right],
	\end{equation}
	the constant $C$ depending only on the above constant $K$, $\beta$ and $\mu$.
\end{lmm}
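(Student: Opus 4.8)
The plan is to regard $J_{T}(u)-TJ_s(\overline u)$ as a single time integral and to estimate its integrand pointwise in $t$ via the turnpike hypothesis \eqref{lemma_estimate_functional_turnpike_estimate}. Writing $TJ_s(\overline u)=\int_0^T J_s(\overline u)\,dt$ and subtracting term by term gives
\begin{equation*}
J_{T}(u)-TJ_s(\overline u)=\frac12\int_0^T\int_{\omega}\big(|u(t)|^2-|\overline u|^2\big)\,dx\,dt+\frac\beta2\int_0^T\int_{\omega_0}\big(|y(t)-z|^2-|\overline y-z|^2\big)\,dx\,dt .
\end{equation*}
I would then factor each difference of squares, $|u(t)|^2-|\overline u|^2=(u(t)-\overline u)(u(t)+\overline u)$ and $|y(t)-z|^2-|\overline y-z|^2=(y(t)-\overline y)(y(t)+\overline y-2z)$, so that in each product one factor is the small quantity $K[\exp(-\mu t)+\exp(-\mu(T-t))]$ supplied by \eqref{lemma_estimate_functional_turnpike_estimate}.

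For the remaining ``sum'' factors I would use the triangle inequality together with an a priori bound on $\overline y$. A comparison argument applied to \cref{semilinear_internal_elliptic_1_slt} — legitimate since $f$ is nondecreasing with $f(0)=0$, so $f(\overline y)$ has the sign of $\overline y$ — yields $\|\overline y\|_{\xLinfty(\Omega)}\le C\|\overline u\|_{\xLinfty(\omega)}$ with $C=C(\Omega)$; hence also $\|y(t)\|_{\xLinfty(\Omega)}\le C\|\overline u\|_{\xLinfty(\omega)}+K[\exp(-\mu t)+\exp(-\mu(T-t))]$ by \eqref{lemma_estimate_functional_turnpike_estimate}. Consequently, for a.e.\ $t\in(0,T)$,
\begin{equation*}
\|u(t)+\overline u\|_{\xLinfty(\omega)}\le 2\|\overline u\|_{\xLinfty(\omega)}+K\big[\exp(-\mu t)+\exp(-\mu(T-t))\big],
\end{equation*}
\begin{equation*}
\|y(t)+\overline y-2z\|_{\xLinfty(\omega_0)}\le 2C\|\overline u\|_{\xLinfty(\omega)}+2\|z\|_{\xLinfty(\omega_0)}+K\big[\exp(-\mu t)+\exp(-\mu(T-t))\big],
\end{equation*}
while \eqref{lemma_estimate_functional_turnpike_estimate} bounds $\|u(t)-\overline u\|_{\xLinfty(\omega)}$ and $\|y(t)-\overline y\|_{\xLinfty(\omega_0)}$ by $K[\exp(-\mu t)+\exp(-\mu(T-t))]$.

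Multiplying the small factors by the sum factors, integrating over $\omega$ (resp.\ $\omega_0$) and then over $(0,T)$, and using the $T$-independent bounds $\int_0^T[\exp(-\mu t)+\exp(-\mu(T-t))]\,dt\le 2/\mu$ and $\int_0^T[\exp(-\mu t)+\exp(-\mu(T-t))]^2\,dt\le 2/\mu$, one sees that every resulting term is at most affine in $\|\overline u\|_{\xLinfty(\omega)}$ and $\|z\|_{\xLinfty(\omega_0)}$: the recurring pattern ``integrable small factor $\times$ (constant $+$ data)'' never produces a term quadratic in the data. Collecting constants — which depend only on $K$, $\mu$, $\beta$ and on $\Omega,\omega,\omega_0$ through $|\omega|$, $|\omega_0|$ and the elliptic comparison constant — gives \eqref{values_ineq}.

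I do not expect a genuine obstacle here; the argument is essentially bookkeeping. The two points needing a little care are the maximum-principle estimate $\|\overline y\|_{\xLinfty(\Omega)}\lesssim\|\overline u\|_{\xLinfty(\omega)}$ for the semilinear elliptic equation, which is exactly where the monotonicity of $f$ and $f(0)=0$ are used, and the verification that no cross term of the form $\|\overline u\|_{\xLinfty(\omega)}^2$ or $\|\overline u\|_{\xLinfty(\omega)}\|z\|_{\xLinfty(\omega_0)}$ survives — guaranteed because the exponentially decaying turnpike factor is always one of the two factors and its (squared) time integral is finite uniformly in $T$.
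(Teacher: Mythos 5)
Your proposal is correct and follows essentially the same route as the paper's proof: the paper likewise writes $J_T(u)-TJ_s(\overline u)$ as a sum of quadratic terms in $u-\overline u$, $y-\overline y$ plus cross terms against $\overline u$ and $\overline y-z$ (your difference-of-squares factorization is the same algebra), bounds the non-small factors affinely in $\|\overline u\|_{\xLinfty(\omega)}$ and $\|z\|_{\xLinfty(\omega_0)}$, and integrates the turnpike exponentials uniformly in $T$. Your explicit comparison-principle bound $\|\overline y\|_{\xLinfty(\Omega)}\le C\|\overline u\|_{\xLinfty(\omega)}$ is a point the paper leaves implicit, and it is valid.
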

\begin{proof}[Proof of Lemma \ref{lemma_estimate_functional_slt}]
	We estimate
	\begin{eqnarray*}
		&\;&\left| J_{T}(u)-TJ_s\left(\overline{u}\right)\right|\nonumber\\
		&=&\left|\frac12 \|u\|_{\xLtwo((0,T)\times \omega)}^2+\frac{\beta}{2}\|y-z\|_{\xLtwo((0,T)\times \omega_0)}^2-T\left[\frac12\|\overline{u}\|_{\xLtwo\left(\Omega\right)}^2+\frac{\beta}{2}\|\overline{y}-z\|_{\xLtwo(\omega_0)}^2\right]\right|\nonumber\\
		&=&\left|\frac12\|u-\overline{u}\|_{\xLtwo((0,T)\times \omega)}^2+\frac{\beta}{2}\|y-\overline{y}\|_{\xLtwo((0,T)\times \omega_0)}^2\right.\nonumber\\
		&\;&+\left.\int_0^T\int_{\omega}(u-\overline{u})\overline{u} dxdt+\beta \int_0^T\int_{\omega_0}(y-\overline{y})(\overline{y}-z)dxdt\right|\nonumber\\
		&\leq &C\left[1+\|\overline{u}\|_{\xLinfty\left(\omega\right)}+\|z\|_{\xLinfty(\omega_0)}\right]\left\{\int_0^T\left[\|u-\overline{u}\|_{\xLinfty\left(\omega\right)}^2+\|u-\overline{u}\|_{\xLinfty\left(\omega\right)}\right] dt\right.\nonumber\\
		&\;&\left.+\int_0^T\left[\|y-\overline{y}\|_{\xLinfty(\omega_0)}^2+\|y-\overline{y}\|_{\xLinfty(\omega_0)}\right]dt\right\}\nonumber\\
		&\leq &C\left[1+\|\overline{u}\|_{\xLinfty(\omega)}+\|z\|_{\xLinfty(\omega_0)}\right],
	\end{eqnarray*}
	where the last inequality follows from \cref{lemma_estimate_functional_turnpike_estimate}.
\end{proof}

\begin{lmm}\label{lemma_upper_bound_value}
	Consider the time-evolution control problem \cref{semilinear_internal_1_slt}-\cref{functional_slt} and its steady version \cref{semilinear_internal_elliptic_1_slt}-\cref{steady_functional_slt}. Arbitrarily fix $y_0\in \xLinfty\left(\Omega\right)$ an initial datum and $z\in \xLinfty(\omega_0)$ a target. We have
	\begin{equation}\label{value_function_ineq}
	\inf_{\xLtwo((0,T)\times \omega)} J_{T}\leq T\inf_{\xLtwo\left(\omega\right)}J_s + K,
	\end{equation}
	the constant $K$ being independent of $T>0$.
\end{lmm}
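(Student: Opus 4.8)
The plan is to test the time-evolution functional against the steady optimal control held constant in time, and to show that the resulting cost overshoots $T\inf_{\xLtwo(\omega)}J_s$ only by a $T$-independent amount. Concretely, let $\overline u$ be an optimal control for the steady problem \cref{semilinear_internal_elliptic_1_slt}-\cref{steady_functional_slt} (its existence was recalled above) and let $\overline y\in\xHtwo(\Omega)\cap\xHone_0(\Omega)$ be the corresponding steady state, so that $J_s(\overline u)=\inf_{\xLtwo(\omega)}J_s$. Viewing $\overline u$ as a $t$-independent control in \cref{semilinear_internal_1_slt}, it belongs to $\xLtwo((0,T)\times\omega)$, hence is admissible, and if $y$ denotes the associated solution of \cref{semilinear_internal_1_slt} then $\inf_{\xLtwo((0,T)\times\omega)}J_T\le J_T(\overline u)$. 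It therefore suffices to prove $J_T(\overline u)\le TJ_s(\overline u)+K$ with $K$ independent of $T$.

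The key step is an exponential relaxation estimate: $y(t)\to\overline y$ exponentially in $\xLtwo(\Omega)$. Writing $w\coloneqq y-\overline y$, one checks that $w$ solves $w_t-\Delta w+a(t,x)w=0$ in $(0,T)\times\Omega$ with $w=0$ on $(0,T)\times\partial\Omega$ and $w(0,\cdot)=y_0-\overline y$, where $a(t,x)\coloneqq\int_0^1 f'\big(\overline y(x)+sw(t,x)\big)\,ds\ge 0$ because $f$ is nondecreasing. Multiplying by $w$, integrating over $\Omega$ and using $a\ge0$ together with the Poincar\'e inequality gives $\tfrac12\tfrac{d}{dt}\|w(t)\|_{\xLtwo(\Omega)}^2\le-\|\nabla w(t)\|_{\xLtwo(\Omega)}^2\le-\lambda_1\|w(t)\|_{\xLtwo(\Omega)}^2$, $\lambda_1$ being the first Dirichlet eigenvalue of $-\Delta$ on $\Omega$, whence $\|w(t)\|_{\xLtwo(\Omega)}\le e^{-\lambda_1 t}\|y_0-\overline y\|_{\xLtwo(\Omega)}$. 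In particular both $\int_0^T\|w(t)\|_{\xLtwo(\Omega)}\,dt$ and $\int_0^T\|w(t)\|_{\xLtwo(\Omega)}^2\,dt$ are bounded by a constant depending only on $\lambda_1$ and $\|y_0-\overline y\|_{\xLtwo(\Omega)}$, and not on $T$.

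To conclude, I would expand $J_T(\overline u)$. Since $\overline u$ does not depend on $t$, $\tfrac12\|\overline u\|_{\xLtwo((0,T)\times\omega)}^2=\tfrac T2\|\overline u\|_{\xLtwo(\omega)}^2$. Writing $y-z=w+(\overline y-z)$ inside $\tfrac\beta2\int_0^T\|y(t)-z\|_{\xLtwo(\omega_0)}^2\,dt$ and expanding the square, the term $\|\overline y-z\|_{\xLtwo(\omega_0)}^2$ produces exactly $\tfrac\beta2 T\|\overline y-z\|_{\xLtwo(\omega_0)}^2$, while the cross term and the $\|w\|^2$ term are bounded, via $\|w(t)\|_{\xLtwo(\omega_0)}\le\|w(t)\|_{\xLtwo(\Omega)}$, the Cauchy--Schwarz inequality and the previous step, by a constant $C\big(\beta,\lambda_1,\|y_0-\overline y\|_{\xLtwo(\Omega)},\|\overline y-z\|_{\xLtwo(\omega_0)}\big)$, which is $T$-independent. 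Collecting the two leading terms gives $J_T(\overline u)=TJ_s(\overline u)+\mathcal R_T$ with $|\mathcal R_T|\le K$ uniformly in $T$, which together with $J_s(\overline u)=\inf_{\xLtwo(\omega)}J_s$ yields \cref{value_function_ineq}.

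The only nontrivial ingredient is the exponential relaxation of $y(t)$ to $\overline y$ in the second paragraph, and this is precisely where the monotonicity assumption $f'\ge 0$ is used: without it the equilibrium $\overline y$ need not be stable and the competitor cost could grow superlinearly in $T$. The remaining points (well-posedness of \cref{semilinear_internal_1_slt} with the $\xLtwo$ control $\overline u$, and $\overline y\in\xLtwo(\Omega)$ so that $\|\overline y-z\|_{\xLtwo(\omega_0)}<\infty$) are standard and already available from the material recalled in the introduction and the appendix.
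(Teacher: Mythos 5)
Your proposal is correct and follows essentially the same route as the paper: test $J_T$ against the steady optimal control $\overline u$ held constant in time, derive the exponential relaxation $\|y(t)-\overline y\|_{\xLtwo(\Omega)}\le e^{-\lambda_1 t}\|y_0-\overline y\|_{\xLtwo(\Omega)}$ from the monotonicity of $f$ (the paper works with the difference equation for $\eta=\hat y-\overline y$ directly, you with the equivalent linearized form $w_t-\Delta w+aw=0$, $a\ge0$), and then expand the tracking term to see that the excess over $TJ_s(\overline u)$ is $T$-independent. No gaps.
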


The proof is available in \Cref{appendixsec:Convergence of averages}.

The following Lemma (\cref{Global-local argument_lemma_opt_est}) plays a key role in the proof of \Cref{th_slt_1}.\\
Let $u^T$ be an optimal control for \cref{semilinear_internal_1_slt}-\cref{functional_slt}. Let $y^T$ be the corresponding optimal state. For any $\varepsilon >0$, let $\delta_{\varepsilon}$ be given by \cref{smallness_target_datum_varepsilon}. Set
\begin{equation}\label{t_s_definition}
t_s\coloneqq \inf\left\{t\in [0,T] \ | \ \|y^T(t)\|_{\xLinfty\left(\Omega\right)}\leq \delta_{\varepsilon}\right\},
\end{equation}
where we use the convention $\inf(\varnothing)=T$. In the next Lemma, we are going to estimate the minimal time $t_s$.\\

\begin{figure}
	\begin{center}
		\begin{tikzpicture}
		\begin{axis}[
		axis x line=middle, axis y line=middle,
		ymin=-1, ymax=11, ytick={4, 8}, yticklabels={$\overline{y}$, $y_0$}, ylabel=$y$,
		xmin=-1, xmax=11, xtick={1.30,10}, xticklabels={$t_s$,$T$}, xlabel=$t$,
		domain=0:10,samples=101, 
		]
		
		{\addplot [dashed, red,line width=0.06cm] {4};}
		\addlegendentry{steady optimum}
		{\addplot [black,line width=0.1cm] {4*exp(-1.8*\x)-2*exp(-1.8*(10-\x))+4};}
		\addlegendentry{optimum}
		\draw [dashed] (10,54) -- (110,54);
		\draw [dashed] (10,46) -- (110,46);
		\draw [dashed] (22.963,10) -- (22.963,90);
		\node [below left, black] at (10,10) {$O$};
		\draw [decorate,decoration={brace,amplitude=4.2pt,mirror,raise=4pt},yshift=0pt]
		(108,46) -- (108,54) node [black,midway,xshift=0.46cm] {$\delta$};
		\end{axis}
		\end{tikzpicture}
		\caption{global-local argument employed in the proof of Lemma \ref{lemma_opt_est}}\label{Global-local argument_lemma_opt_est}
	\end{center}
\end{figure}
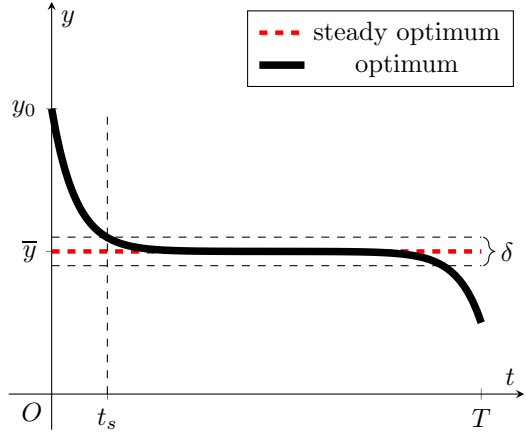

\begin{lmm}[Global attractor property]\label{lemma_opt_est}
	Consider the control problem \cref{semilinear_internal_1_slt}-\cref{functional_slt}. Let $y_0\in \xLinfty\left(\Omega\right)$ and $z\in \xLinfty(\omega_0)$. Let $u^T$ be an optimal control for \cref{semilinear_internal_1_slt}-\cref{functional_slt} and let $y^T$ be the corresponding optimal state. For any $\varepsilon >0$, there exist $\rho_{\varepsilon}=\rho_{\varepsilon}(\Omega,\beta,\varepsilon)$ and $\tau_{\varepsilon}=\tau_{\varepsilon}(\Omega,\beta,y_0,\varepsilon)$, such that if $\|z\|_{\xLinfty\left(\omega_0\right)}\leq \rho_{\varepsilon}$ and $T\geq \tau_{\varepsilon}$,
	\begin{equation}\label{lemma_opt_est_eq1}
	t_s\leq \tau_{\varepsilon}
	\end{equation}
	and
	\begin{equation}\label{lemma_opt_est_eq2}
	\|u^T(t)-\overline{u}\|_{\xLinfty\left(\omega\right)}+\|y^T(t)-\overline{y}\|_{\xLinfty\left(\Omega\right)}\leq \varepsilon\left[\exp\left(-\mu(t-t_s)\right)+\exp\left(-\mu(T-(t-t_s))\right)\right],\hspace{0.6 cm}\forall t\in [t_s,T].
	\end{equation}
	The minimal time $t_s$ is defined in \eqref{t_s_definition}, the constant $\mu$ is given by \eqref{formulas_22_PZ2} and $\delta_{\varepsilon}$ is given by \cref{smallness_target_datum_varepsilon}.
\end{lmm}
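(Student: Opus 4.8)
The plan is to prove \eqref{lemma_opt_est_eq2} first — it does not use the bound on $t_s$ — and then \eqref{lemma_opt_est_eq1} by a global--local contradiction. We may assume $t_s>0$ (otherwise $\|y_0\|_{\xLinfty(\Omega)}\leq\delta_\varepsilon$ and both claims follow at once from Lemma \ref{lemma_3}); since $t\mapsto\|y^T(t)\|_{\xLinfty(\Omega)}$ is continuous on $(0,T]$ we then have $\|y^T(t_s)\|_{\xLinfty(\Omega)}\leq\delta_\varepsilon$. For \eqref{lemma_opt_est_eq2}: the functional \cref{functional_slt} splits additively into its restrictions to $[0,t_s]$ and $[t_s,T]$, and the tail piece depends on the past only through $y^T(t_s)$; by optimality of $u^T$ and the dynamic programming principle, $u^T|_{[t_s,T]}$ is an optimal control for \cref{semilinear_internal_1_slt}-\cref{functional_slt} posed on $[t_s,T]$ with initial datum $y^T(t_s)$. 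Imposing $\rho_\varepsilon\leq\delta_\varepsilon$, both $\|y^T(t_s)\|_{\xLinfty(\Omega)}$ and $\|z\|_{\xLinfty(\omega_0)}$ are $\leq\delta_\varepsilon$, so Lemma \ref{lemma_3} (after the time shift $t\mapsto t-t_s$) gives uniqueness of that optimal control — hence it coincides with $u^T|_{[t_s,T]}$ — and the turnpike estimate \cref{lemma_3_turnpike_estimate_varepsilon} on $[t_s,T]$; shifting back yields \eqref{lemma_opt_est_eq2}, with $\mu$ the rate of \eqref{formulas_22_PZ2}.

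For \eqref{lemma_opt_est_eq1}, write $J_T(u^T)=A+B$ where $A:=\tfrac12\int_0^{t_s}\!\int_\omega|u^T|^2+\tfrac\beta2\int_0^{t_s}\!\int_{\omega_0}|y^T-z|^2$ is the cost incurred on $[0,t_s]$ and $B$ the cost on $[t_s,T]$. By the previous step $(u^T,y^T)$ satisfies a turnpike estimate on $[t_s,T]$, so Lemma \ref{lemma_estimate_functional_slt} applied there gives $B\geq(T-t_s)\,J_s(\overline u)-C_1$, where $C_1$ depends only on $\varepsilon,\beta,\mu$ and on $\|\overline u\|_{\xLinfty(\omega)},\|z\|_{\xLinfty(\omega_0)}$ (both $\lesssim\delta_\varepsilon$), hence is $T$-independent; here $J_s(\overline u)=\inf_{\xLtwo(\omega)}J_s$ is the steady value of \cref{semilinear_internal_elliptic_1_slt}-\cref{steady_functional_slt}. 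By Lemma \ref{lemma_upper_bound_value}, $J_T(u^T)=\inf_{\xLtwo((0,T)\times\omega)}J_T\leq T\,J_s(\overline u)+C_2$ with $C_2=C_2(\Omega,\beta,y_0,\varepsilon)$ $T$-independent. Subtracting, $A\leq t_s\,J_s(\overline u)+C_1+C_2$: the excess of $A$ over the steady cost $t_s\,J_s(\overline u)$ is $T$-uniformly bounded.

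The crux is the opposite inequality — staying away from the turnpike is expensive at a $T$-uniform rate. By definition of $t_s$, $\|y^T(t)\|_{\xLinfty(\Omega)}>\delta_\varepsilon$ for $t\in[0,t_s)$. Testing the state equation \cref{semilinear_internal_1_slt} with $y^T$, using $f(s)s\geq0$ and Poincaré's inequality with constant $\lambda_1$, gives $\frac{d}{dt}\|y^T\|_{\xLtwo(\Omega)}^2+\lambda_1\|y^T\|_{\xLtwo(\Omega)}^2\leq\lambda_1^{-1}\|u^T\|_{\xLtwo(\omega)}^2$; integrating on $[0,t_s]$ and discarding the nonnegative endpoint term,
\[
A\ \geq\ \tfrac12\int_0^{t_s}\|u^T(t)\|_{\xLtwo(\omega)}^2\,dt\ \geq\ \tfrac{\lambda_1^2}{2}\int_0^{t_s}\|y^T(t)\|_{\xLtwo(\Omega)}^2\,dt-\tfrac{\lambda_1}{2}\|y_0\|_{\xLtwo(\Omega)}^2 .
\]
To bound the first integral from below I would invoke Lemma \ref{lemma_bound_optima}: it makes the forcing $u^T\chi_\omega$ uniformly $\xLinfty$-bounded in $T$, so parabolic regularity yields a $T$-independent bound $\|y^T(t)\|_{C^1(\overline\Omega)}\leq C$ for $t\geq1$; since $y^T|_{\partial\Omega}=0$, the pointwise bound $\|y^T(t)\|_{\xLinfty(\Omega)}>\delta_\varepsilon$ then forces $|y^T(t,\cdot)|>\delta_\varepsilon/2$ on a ball of radius $\sim\delta_\varepsilon/C$ contained in $\Omega$, whence $\|y^T(t)\|_{\xLtwo(\Omega)}^2\geq b_\varepsilon^2>0$ for all $t\in[1,t_s)$ with $b_\varepsilon$ $T$-independent. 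Thus $A\geq c_\varepsilon(t_s-1)-C_3$ with $c_\varepsilon:=\lambda_1^2b_\varepsilon^2/2>0$ and $C_3:=\tfrac{\lambda_1}{2}\|y_0\|_{\xLtwo(\Omega)}^2$ (if $t_s\leq1$ the bound \eqref{lemma_opt_est_eq1} is trivially in hand).

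Combining the two estimates for $A$ gives $t_s\,(c_\varepsilon-J_s(\overline u))\leq c_\varepsilon+C_1+C_2+C_3$. Since $J_s(\overline u)\leq J_s(0)=\tfrac\beta2\|z\|_{\xLtwo(\omega_0)}^2\leq\tfrac{\beta|\omega_0|}{2}\rho_\varepsilon^2$, the choice $\rho_\varepsilon:=\min\{\delta_\varepsilon,(c_\varepsilon/(\beta|\omega_0|))^{1/2}\}$ forces $c_\varepsilon-J_s(\overline u)\geq c_\varepsilon/2$, hence $t_s\leq2c_\varepsilon^{-1}(c_\varepsilon+C_1+C_2+C_3)=:\tau_\varepsilon$, a $T$-independent constant depending on $\Omega,\beta,y_0,\varepsilon$; replacing $\tau_\varepsilon$ by $\max\{\tau_\varepsilon,1\}$ absorbs the case $t_s\leq1$, and the hypothesis $T\geq\tau_\varepsilon$ guarantees the statement is nonvacuous. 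The step I expect to be the main obstacle is exactly this last lower bound — in particular securing a rate $c_\varepsilon>0$ independent of the horizon $T$, which requires marrying the energy identity with the $T$-uniform regularity of the optimal state provided by Lemma \ref{lemma_bound_optima}; the rest is careful bookkeeping of $T$-independent constants.
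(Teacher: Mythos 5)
Your overall architecture is the paper's: establish \eqref{lemma_opt_est_eq2} via Bellman's principle and Lemma \ref{lemma_3} applied on $[t_s,T]$ with initial datum $y^T(t_s)$, then run the global--local contradiction by splitting $J_T(u^T)$ into the costs $A$ on $[0,t_s]$ and $B$ on $[t_s,T]$, bounding $B$ from below by $(T-t_s)\inf J_s - C$ via Lemma \ref{lemma_estimate_functional_slt}, bounding $J_T(u^T)$ from above by $T\inf J_s + C$ via Lemma \ref{lemma_upper_bound_value}, and forcing $t_s$ to be finite by showing that $A$ grows linearly in $t_s$ at a rate that beats $\inf J_s$ once $\rho_\varepsilon$ is small. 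All of that matches the paper's Steps 2--3, including the final choice of $\rho_\varepsilon$ so that $\inf J_s$ is at most half the linear rate.

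The one step you do differently is the crucial lower bound on $A$. The paper applies Lemma \ref{lemma_reg_nonnegativepotential} (with potential $c_y=f(y^T)/y^T\geq 0$ and source $u^T\chi_\omega$), which gives $\int_0^{t_s}\|y^T(t)\|_{\xLinfty(\Omega)}^2\,dt\leq K\bigl[\|y_0\|_{\xLinfty(\Omega)}^2+\int_0^{t_s}\int_\omega|u^T|^2\bigr]$ with $K=K(\Omega)$; since $\|y^T(t)\|_{\xLinfty(\Omega)}\geq\delta_\varepsilon$ on $[0,t_s)$ by definition of $t_s$, this yields $A\geq K_1 t_s\delta_\varepsilon^2-K_2$ directly, with a rate $K_1\delta_\varepsilon^2$ depending only on $\Omega,\beta,\varepsilon$. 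You instead run an $\xLtwo$ energy estimate and then convert the $\xLinfty$ lower bound into an $\xLtwo$ lower bound through a $T$-uniform $C^1(\overline\Omega)$ bound on $y^T(t)$ (obtained from Lemma \ref{lemma_bound_optima} plus parabolic regularity) and a geometric argument near the maximum point. This works, and the regularity claim is standard for $n\leq 3$, but it is not free of consequences: the gradient constant $C$, hence $b_\varepsilon$ and your rate $c_\varepsilon$, depend on the $\xLinfty$ size of the optima, i.e.\ on $R\geq\|y_0\|_{\xLinfty(\Omega)}$. Your choice $\rho_\varepsilon=\min\{\delta_\varepsilon,(c_\varepsilon/(\beta|\omega_0|))^{1/2}\}$ therefore depends on $y_0$, whereas the statement (and the paper's proof, precisely because $K$ in Lemma \ref{lemma_reg_nonnegativepotential} depends only on $\Omega$) requires $\rho_\varepsilon=\rho_\varepsilon(\Omega,\beta,\varepsilon)$ uniform in the initial datum; this uniformity is what lets Theorem \ref{th_slt_1} fix the smallness threshold for the target before the initial datum is chosen. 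To recover it you should replace your $\xLtwo$-based lower bound by the $\xLtwo_t\xLinfty_x$ estimate of Lemma \ref{lemma_reg_nonnegativepotential}, which is exactly the tool the paper prepared for this purpose. The remaining bookkeeping of $T$-independent constants in your argument is correct.
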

\begin{proof}[Proof of Lemma \ref{lemma_opt_est}]
	Let $R\geq \left\|y_0\right\|_{\xLinfty\left(\Omega\right)}$ be arbitrary. Throughout the proof, constant $K_1=K_1(\Omega,\beta)$ is chosen as small as needed, whereas constant $K_2=K_2\left(\Omega,\beta,R\right)$ is chosen as large as needed.\\
	\textit{Step 1} \ \textbf{Estimate of the $\xLinfty$ norm of steady optimal controls}\\
	In this step, we follow the arguments of \cite[subsection 3.2]{PZ2}. Let $\overline{u}\in \xLtwo\left(\Omega\right)$ be an optimal control for \cref{semilinear_internal_elliptic_1_slt}-\cref{steady_functional_slt}. By definition of minimizer (optimal control),
	\begin{equation*}
	\frac12\|\overline{u}\|_{\xLtwo\left(\omega\right)}^2\leq J_{s}\left(\overline{u}\right)\leq J_{s}(0)=\frac{\beta}{2}\|z\|_{\xLtwo\left(\omega_0\right)}^2\leq \frac{\beta\mu_{leb}(\omega_0)}{2}\|z\|_{\xLinfty(\omega_0)}^2.
	\end{equation*}
	Now, any optimal control is of the form $\overline{u}=-\overline{q}\chi_{\omega}$, where the pair $(\overline{y},\overline{q})$ satisfies the optimality system \cref{semilinear_internal_elliptic_2}. Since $n=1,2,3$, by elliptic regularity (see, e.g. \cite[Theorem 4 subsection 6.3.2]{PDE}) and Sobolev embeddings (see e.g. \cite[Theorem 6 subsection 5.6.3]{PDE}), $\overline{q}\in \xCzero(\overline{\Omega})$ and $\|\overline{q}\|_{\xLinfty\left(\Omega\right)}\leq K\|z\|_{\xLinfty\left(\omega_0\right)}$, where $K=K\left(\Omega,\beta\right)$. This yields $\overline{u}\in \xCzero(\overline{\omega})$ and
	\begin{equation}\label{lemma_opt_est_eq3}
	\|\overline{u}\|_{\xLinfty\left(\omega\right)}\leq K\|z\|_{\xLinfty\left(\omega_0\right)}.
	\end{equation}
	\textit{Step 2} \ \textbf{There exist $\rho_{\varepsilon}=\rho_{\varepsilon}(\Omega,\beta,\varepsilon)$ and $\tau_{\varepsilon}=\tau_{\varepsilon}(\Omega,\beta,y_0,\varepsilon)$, such that if $\|z\|_{\xLinfty\left(\omega_0\right)}\leq \rho_{{\varepsilon}}$, then the critical time satisfies $t_s\leq \tau_{\varepsilon}$}\\
	Let $\overline{u}$ be an optimal control for the steady problem. Then, by definition of minimizer (optimal control),
	\begin{equation}\label{lemma_opt_est_eq7}
	J_{T}\left(u^T\right)\leq J_{T}\left(\overline{u}\right)
	\end{equation}
	and, by Lemma \ref{lemma_upper_bound_value},
	\begin{equation}\label{lemma_opt_est_eq8}
	J_T\left(\overline{u}\right) \leq T\inf_{\xLtwo\left(\omega\right)}J_s+K_2.
	\end{equation}
	Now, we split the integrals in $J_T$ into $[0,t_s]$ and $(t_s,T]$
	\begin{eqnarray}\label{lemma_opt_est_eq10}
	J_{T}\left(u^T\right)&=&\frac12\int_0^{t_s}\int_{\omega}|u^T|^2 dt+\frac{\beta}{2}\int_{0}^{t_s}\int_{\omega_0}|y^T-z|^2dxdt\nonumber\\
	&\;&+\frac12\int_{t_s}^{T}\int_{\omega}|u^T|^2 dt+\frac{\beta}{2}\int_{t_s}^{T}\int_{\omega_0}|y^T-z|^2dxdt.
	\end{eqnarray}
	Set:
	\begin{equation*}c_y(t,x)\coloneqq 
	\begin{dcases}
	\frac{f(y^T(t,x))}{y^T(t,x)} \hspace{1.3 cm} & y^T(t,x)\neq 0\\
	f^{\prime}(0) & y^T(t,x)= 0.\\
	\end{dcases}
	\end{equation*}
	Since $f$ is nondecreasing and $f(0)=0$, we have $c_y\geq 0$. Then, Lemma \ref{lemma_reg_nonnegativepotential} (with potential $c_y$ and source term $h\coloneqq u^T\chi_{\omega}$) yields
	\begin{equation*}
	\frac12\int_0^{t_s}\int_{\omega}|u^T|^2 dt+\frac{\beta}{2}\int_{0}^{t_s}\int_{\omega_0}|y^T-z|^2dxdt\geq K_1\int_0^{t_s}\|y^T(t)\|_{\xLinfty\left(\Omega\right)}^2dt-K_2.
	\end{equation*}		
	Furthermore, by definition of $t_s$, for any $t\in [0,t_s]$, $\|y^T(t)\|_{\xLinfty\left(\Omega\right)}\geq \delta_{\varepsilon}$.
	Then,
	\begin{equation}\label{lemma_opt_est_eq11}
	\frac12\int_0^{t_s}\int_{\omega}|u^T|^2 dt+\frac{\beta}{2}\int_{0}^{t_s}\int_{\omega_0}|y^T-z|^2dxdt\geq K_1t_s \delta_{\varepsilon}^2-K_2.
	\end{equation}
	Once again, by definition of $t_s$ and since $\rho_{\varepsilon}\leq \delta_{\varepsilon}$,
	\begin{equation*}
	\|y^T(t_s)\|_{\xLinfty\left(\Omega\right)}\leq \delta_{\varepsilon}\hspace{0.3 cm}\mbox{and}\hspace{0.3 cm}\|z\|_{\xLinfty\left(\omega_0\right)}\leq \delta_{\varepsilon},
	\end{equation*}
	where $\delta_{\varepsilon}$ is given by \cref{smallness_target_datum_varepsilon}. Therefore, by Lemma \ref{lemma_3}, the turnpike estimate \cref{lemma_3_turnpike_estimate_varepsilon} is satisfied in $[t_s,T]$. Lemma \ref{lemma_estimate_functional_slt} applied in $[t_s,T]$ gives
	\begin{eqnarray}\label{lemma_opt_est_eq20}
	&\;&\frac12\int_{t_s}^{T}\int_{\omega}|u^T|^2 dt+\frac{\beta}{2}\int_{t_s}^{T}\int_{\omega_0}|y^T-z|^2dxdt\nonumber\\
	&\geq&(T-t_s)\inf_{\xLtwo\left(\omega\right)}J_s-K_2\left[1+\|\overline{u}\|_{\xLinfty\left(\omega\right)}+\|z\|_{\xLinfty\left(\omega_0\right)}\right]\nonumber\\
	&\geq &(T-t_s)\inf_{\xLtwo\left(\omega\right)}J_s-K_2,
	\end{eqnarray}
	where the last inequality is due to \cref{lemma_opt_est_eq3} and $\|z\|_{\xLinfty\left(\omega_0\right)}\leq \delta_{\varepsilon}$.
	
	At this point, by \cref{lemma_opt_est_eq10}, \cref{lemma_opt_est_eq11} and \cref{lemma_opt_est_eq20}
	\begin{equation}\label{lemma_opt_est_eq24}
	J_{T}\left(u^T\right)\geq  K_1t_s \delta_{\varepsilon}^2+(T-t_s)\inf_{\xLtwo\left(\omega\right)}J_s-K_2.
	\end{equation}
	
	Therefore, by \cref{lemma_opt_est_eq24}, \cref{lemma_opt_est_eq7} and \cref{lemma_opt_est_eq8}
	\begin{equation*}
	K_1t_s \delta_{\varepsilon}^2+(T-t_s)\inf_{\xLtwo\left(\omega\right)}J_s-K_2\leq T\inf_{\xLtwo\left(\omega\right)}J_s+K_2,
	\end{equation*}
	whence
	\begin{equation}\label{lemma_opt_est_estimate}
	t_s\left[K_1 \delta_{\varepsilon}^2-\inf_{\xLtwo\left(\omega\right)}J_s\right]\leq K_2.
	\end{equation}
	Now, by \cref{lemma_opt_est_eq3}, there exists $\rho_{\varepsilon}=\rho_{\varepsilon}(\Omega,\beta,\varepsilon)\leq \delta_{\varepsilon}$ such that, if the target $\|z\|_{\xLinfty(\omega_0)}\leq \rho_{\varepsilon}$, then $\inf_{\xLtwo\left(\omega\right)}J_s\leq \frac{K_1\delta_{\varepsilon}^2}{2}$. This, together with \cref{lemma_opt_est_estimate}, yields
	\begin{equation*}
	t_s\frac{K_1\delta_{\varepsilon}^2}{2}\leq K_2,
	\end{equation*}
	whence
	\begin{equation*}
	t_s\leq \frac{K_2}{\delta_{\varepsilon}^2}.
	\end{equation*}
	Set
	\begin{equation*}
	\tau_{\varepsilon}\coloneqq \frac{K_2}{\delta_{\varepsilon}^2}+1.
	\end{equation*}
	This finishes this step.\\
	\textit{Step 3} \ \textbf{Conclusion}\\
	By Step 2, for any $T\geq \tau_{\varepsilon}$, there exists $t_s\leq \tau_{\varepsilon}$ such that
	\begin{equation}\label{lemma_opt_est_eq_66}
	\|y^T(t_s)\|_{\xLinfty\left(\Omega\right)}\leq \delta_{\varepsilon},
	\end{equation}
	where $\delta_{\varepsilon}$ is given by \cref{lemma_3_turnpike_estimate_varepsilon}. Now, by Bellman's Principle of Optimality, $u^T\hspace{-0.1 cm}\restriction_{(t_s,T)}$ is optimal for \cref{semilinear_internal_1_slt}-\cref{functional_slt}, with initial datum $y^T(t_s)$ and target $z$. Since $\rho_{\varepsilon}\leq \delta_{\varepsilon}$, we also have
	\begin{equation}\label{lemma_opt_est_eq_86}
	\|z\|_{\xLinfty\left(\omega_0\right)}\leq \rho_{\varepsilon}  \leq \delta_{\varepsilon}.
	\end{equation}
	Then, we can apply Lemma \ref{lemma_3}, getting \cref{lemma_opt_est_eq2}. This completes the proof.
\end{proof}

\subsection{Proof of \Cref{th_slt_1}}
\label{subsec:5.2.2}
We now prove \Cref{th_slt_1}.

\begin{proof}[Proof of \Cref{th_slt_1}]
	Arbitrarily fix $\varepsilon>0$. By Lemma \ref{lemma_opt_est}, there exists $\rho_{\varepsilon}\left(\Omega,\beta,\varepsilon\right) >0$ such that if
	\begin{equation}\label{smallness_targetanddatum}
	\|z\|_{\xLinfty\left(\omega_0\right)}\leq \rho_{\varepsilon}\hspace{1 cm}\mbox{and}\hspace{1 cm}T\geq \tau_{\varepsilon},
	\end{equation}
	any optimal control satisfies the turnpike estimate
	\begin{equation}\label{prop1_eq66}
	\|u^T(t)-\overline{u}\|_{\xLinfty\left(\omega\right)}+\|y^T(t)-\overline{y}\|_{\xLinfty\left(\Omega\right)}\leq \varepsilon\left[\exp\left(-\mu(t-t_s)\right)+\exp\left(-\mu(T-(t-t_s))\right)\right],\hspace{0.6 cm}\forall t\in [t_s,T],
	\end{equation}
	with $t_s\leq \tau_{\varepsilon}$.
	
	Now, as in step 1 of the proof of Lemma \ref{lemma_opt_est}, we can follow the arguments of \cite[subsection 3.2]{PZ2} getting
	\begin{equation}\label{}
	\|\overline{u}\|_{\xLinfty\left(\omega\right)}+\|\overline{y}\|_{\xLinfty\left(\Omega\right)}\leq K_1\left\|z\right\|_{\xLinfty\left(\omega_0\right)},\hspace{0.6 cm}\forall t\in [t_s,T],
	\end{equation}
	with $K_1=K_1\left(\Omega,\beta\right)$.
	
	Set
	\begin{equation*}
	K_{\varepsilon}\coloneqq \exp\left(\mu \tau_{\varepsilon}\right)\max\left\{\left(K+K_1\right)\left[\|y_0\|_{\xLinfty\left(\Omega\right)}+\rho_{\varepsilon}\right], \varepsilon\right\},
	\end{equation*}
	with $\mu >0$ the exponential rate defined in \eqref{formulas_22_PZ2} and $K$ is given by \cref{lemma_bound_optima_estimate_1}. Note that $K_{\varepsilon}=K_{\varepsilon}\left(\Omega,\beta,\varepsilon\right)$ and, in particular, it is independent of the time horizon. By the above definition, for every $T>0$ and for each $t\in [0,\tau_{\varepsilon}]\cap [0,T]$
	\begin{equation}\label{th_slt_1_turnpike_1}
	\|u^T(t)-\overline{u}\|_{\xLinfty\left(\omega\right)}+\|y^T(t)-\overline{y}\|_{\xLinfty\left(\Omega\right)}\leq K_{\varepsilon}\exp\left(-\mu\tau_{\varepsilon}\right)\leq K_{\varepsilon}\exp\left(-\mu t\right).
	\end{equation}
	On the other hand, for $t\geq t_s$, \cref{prop1_eq66} holds, whence
	\begin{align}\label{}
	\|u^T(t)-\overline{u}\|_{\xLinfty\left(\omega\right)}+\|y^T(t)-\overline{y}\|_{\xLinfty\left(\Omega\right)}&\leq\varepsilon\left[\exp\left(-\mu(t-t_s)\right)+\exp\left(-\mu(T-(t-t_s))\right)\right]\nonumber\\
	&=\varepsilon\exp\left(-\mu(t-t_s)\right)+\varepsilon\exp\left(-\mu(T-(t-t_s))\right)\nonumber\\
	&=\varepsilon\exp\left(-\mu t \right)\exp\left(\mu t_s\right)+\varepsilon\exp\left(-\mu(T-t)\right)\exp\left(-\mu t_s\right)\nonumber\\
	&\leq\varepsilon\exp\left(-\mu t \right)\exp\left(\mu \tau_{\varepsilon}\right)+\varepsilon\exp\left(-\mu(T-t)\right)\nonumber\\
	&\leq K_{\varepsilon}\exp\left(-\mu t \right)+\varepsilon\exp\left(-\mu(T-t)\right).
	\end{align}
	Then, \cref{exp_turnpike_2} follows.
\end{proof}

\section{Control acting everywhere: convergence of averages}
\label{sec:5.3}

In this section, we suppose that the control acts everywhere, namely $\omega=\Omega$ in the state equation \cref{semilinear_internal_1_slt}. 
Our purpose is to prove \Cref{theorem_convergence_averages}, valid for any data and targets.

In Lemma \ref{lemma_upper_bound_value}, we observed that, even in the more general case $\omega\subsetneq \Omega$, we have an estimate from above of the infimum of the time-evolution functional in terms of the steady functional. This is the easier task obtained by plugging the steady optimal control in the time-evolution functional. The complicated task is to estimate from below the infimum of the time-evolution functional, in terms of the steady functional. Indeed, the lower bound indicates that the time-evolution strategies cannot perform significantly better than the steady one and this is in general the hardest task in the proof of turnpike results. The key idea is indicated in Lemma \ref{lemma_functional rewritten}.

The main idea for the proof of \Cref{theorem_convergence_averages} is in the following Lemma, where an alternative representation formula for the time-evolution functional is obtained.

\begin{lmm}\label{lemma_functional rewritten}
	Consider the functional introduced in \cref{functional_slt}-\cref{semilinear_internal_1_slt} and its steady version \cref{semilinear_internal_elliptic_1_slt}-\cref{steady_functional_slt}. Set $F\left(y\right)\coloneqq \int_0^yf\left(\xi\right)d\xi$. Assume $\omega=\Omega$. Suppose the initial datum $y_0\in \xLinfty\left(\Omega\right)\cap \xHone_0\left(\Omega\right)$. Then, for any control $u\in \xLtwo((0,T)\times \omega)$, we can rewrite the functional as
	\begin{align}\label{lemma_functional rewritten_eq1}
	J_{T}(u)&= \int_0^T J_s\big(-\Delta y(t,\cdot) +f\left(y(t,\cdot)\right)\big) dt\nonumber\\
	&\;\hspace{0.33 cm}+\frac12 \int_0^T\int_{\Omega} \left|y_t(t,x)\right|^2 dxdt\nonumber\\
	&\;\hspace{0.33 cm}	+\frac12\int_{\Omega}\left[\left\|\nabla y(T,x)\right\|^2+2F\left(y(T,x)\right)-\left\|\nabla y_0(x)\right\|^2-2F\left(y_0(x)\right)\right]dx,
	\end{align}
	where, for a.e. $t\in (0,T)$, $J_s\big(-\Delta y(t,\cdot) +f\left(y(t,\cdot)\right)\big)$ denotes the evaluation of the steady functional $J_s$ at control $u_s(\cdot)\coloneqq -\Delta y(t,\cdot) +f\left(y(t,\cdot)\right)$ and $y$ is the state associated to control $u$ solution to
	\begin{equation}\label{semilinear_internal_6_slt}
	\begin{dcases}
	y_t-\Delta y+f\left(y\right)=u\hspace{2.8 cm} & \mbox{in} \hspace{0.10 cm}(0,T)\times\Omega\\
	y=0  & \mbox{on}\hspace{0.10 cm} (0,T)\times \partial \Omega\\
	y(0,x)=y_0(x)  & \mbox{in}\hspace{0.10 cm}  \Omega.
	\end{dcases}
	\end{equation}
\end{lmm}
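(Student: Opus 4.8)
The plan is to use the state equation to eliminate the control from $J_T$. Since $\omega=\Omega$, the constraint in \cref{semilinear_internal_6_slt} is exactly $u=y_t-\Delta y+f(y)$, and the well-posedness theory of \Cref{appendixsec:sml_heat_well_posedeness} guarantees, for $y_0\in\xLinfty(\Omega)\cap\xHone_0(\Omega)$, a solution in the class $y\in\xLtwo\!\left(0,T;\xHtwo(\Omega)\cap\xHone_0(\Omega)\right)\cap\xCzero\!\left([0,T];\xHone_0(\Omega)\right)$ with $y_t\in\xLtwo((0,T)\times\Omega)$; in particular, using $\xHtwo(\Omega)\hookrightarrow\xLinfty(\Omega)$ for $n\le 3$, each of $y_t$, $-\Delta y$, $f(y)$ lies in $\xLtwo((0,T)\times\Omega)$ and the substitution is legitimate. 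Expanding the square $\left|y_t+({-}\Delta y+f(y))\right|^2=|y_t|^2+2\,y_t\,({-}\Delta y+f(y))+|{-}\Delta y+f(y)|^2$ and inserting into \cref{functional_slt}, I would write $J_T(u)=A+B+C$ with
\begin{align*}
A&\coloneqq\frac12\int_0^T\!\!\int_\Omega\left|{-}\Delta y+f(y)\right|^2dxdt+\frac\beta2\int_0^T\!\!\int_{\omega_0}|y-z|^2dxdt,\\
B&\coloneqq\frac12\int_0^T\!\!\int_\Omega|y_t|^2dxdt,\qquad C\coloneqq\int_0^T\!\!\int_\Omega y_t\left({-}\Delta y+f(y)\right)dxdt .
\end{align*}

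The term $B$ is already the second addendum in \cref{lemma_functional rewritten_eq1}. For $A$, I would note that for a.e.\ $t\in(0,T)$ the function $y(t,\cdot)\in\xHtwo(\Omega)\cap\xHone_0(\Omega)$ is, by uniqueness of the elliptic state recalled below \cref{semilinear_internal_elliptic_1_slt} and since $\omega=\Omega$, exactly the steady state associated with the control $u_s\coloneqq-\Delta y(t,\cdot)+f(y(t,\cdot))$; therefore $\frac12\|u_s\|^2_{\xLtwo(\Omega)}+\frac\beta2\|y(t,\cdot)-z\|^2_{\xLtwo(\omega_0)}=J_s\!\left({-}\Delta y(t,\cdot)+f(y(t,\cdot))\right)$, and integrating over $(0,T)$ gives $A=\int_0^T J_s\!\left({-}\Delta y(t,\cdot)+f(y(t,\cdot))\right)dt$. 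For $C$, I would invoke the auxiliary PDE lemma of the appendix, which states that for a solution in the above class $t\mapsto\frac12\|\nabla y(t)\|^2_{\xLtwo(\Omega)}$ is absolutely continuous on $[0,T]$ with $\frac{d}{dt}\,\frac12\|\nabla y(t)\|^2_{\xLtwo(\Omega)}=\int_\Omega y_t(t)\,({-}\Delta y(t))\,dx$ for a.e.\ $t$; and, since $F\in\xCone(\xR)$ with $F'=f$ and $t\mapsto y(t,x)$ is absolutely continuous for a.e.\ $x$ (because $y\in\xHone(0,T;\xLtwo(\Omega))$), the chain rule together with Fubini gives that $t\mapsto\int_\Omega F(y(t,x))\,dx$ is absolutely continuous with derivative $\int_\Omega f(y(t))\,y_t(t)\,dx$. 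Adding these two identities and applying the fundamental theorem of calculus on $[0,T]$,
\begin{equation*}
C=\left[\frac12\|\nabla y(t)\|^2_{\xLtwo(\Omega)}+\int_\Omega F(y(t,x))\,dx\right]_{t=0}^{t=T}=\frac12\int_\Omega\left[\|\nabla y(T,x)\|^2+2F(y(T,x))-\|\nabla y_0(x)\|^2-2F(y_0(x))\right]dx,
\end{equation*}
so that $J_T(u)=A+B+C$ is exactly \cref{lemma_functional rewritten_eq1}.

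The step I expect to be the genuine obstacle is the differentiation identity $\frac{d}{dt}\frac12\|\nabla y(t)\|^2_{\xLtwo(\Omega)}=\langle y_t(t),\,{-}\Delta y(t)\rangle_{\xLtwo(\Omega)}$ in the class $y\in\xLtwo(0,T;\xHtwo\cap\xHone_0)$, $y_t\in\xLtwo(0,T;\xLtwo)$: the right-hand side is an honest $\xLtwo(\Omega)$ pairing of two $\xLtwo$ functions, but $y_t(t)$ need not lie in $\xHone_0(\Omega)$, so one cannot integrate by parts in space naively; the identity has to be obtained by Steklov averaging in time (equivalently, via the spectral decomposition of the Dirichlet Laplacian), which is precisely the auxiliary PDE result prepared in the appendix. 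The alternative route, alluded to in the statement, is to prove \cref{lemma_functional rewritten_eq1} first for smooth $y_0$ — where $y$ is a classical solution and all manipulations are elementary — and then pass to the limit using continuous dependence of $y$, $\nabla y$, $y_t$ and $\Delta y$ on the data. Everything else is routine bookkeeping; note finally that in the application to \Cref{theorem_convergence_averages} the control inserted is the optimal one, bounded by Lemma \ref{lemma_bound_optima}, so that the right-hand side of \cref{lemma_functional rewritten_eq1} is finite and controlled uniformly in $T$.
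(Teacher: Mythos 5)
Your proposal is correct and follows essentially the same route as the paper: you expand $|u|^2=|y_t-\Delta y+f(y)|^2$, recognize the cross term $\int_0^T\int_\Omega y_t\left(-\Delta y+f(y)\right)dxdt$ as the exact time derivative of $\frac12\|\nabla y(t)\|_{\xLtwo(\Omega)}^2+\int_\Omega F(y(t,x))\,dx$, and identify the remaining spatial part with $J_s$ evaluated at $-\Delta y(t,\cdot)+f(y(t,\cdot))$, which is precisely the content of Lemma \ref{lemma_parab_L2_equality} combined with the paper's proof of Lemma \ref{lemma_functional rewritten}. Your closing remarks on justifying the differentiation identity match the paper's strategy of proving the identity first for smooth data and concluding by a density argument based on parabolic regularity.
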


In \cref{lemma_functional rewritten_eq1}, the term $\int_0^T\int_{\Omega} \left|y_t(t,x)\right|^2 dxdt$ emerges. This means that the time derivative of optimal states has to be small, whence the time-evolving optimal strategies for \cref{functional_slt}-\cref{semilinear_internal_1_slt} are in fact close to the steady ones.\\
The proof of Lemma \ref{lemma_functional rewritten} is based on the following PDE result, which basically asserts that the squared right hand side of the equation
\begin{equation*}
\begin{dcases}
y_t-\Delta y+f\left(y\right)=h\hspace{2.8 cm} & \mbox{in} \hspace{0.10 cm}(0,T)\times\Omega\\
y=0  & \mbox{on}\hspace{0.10 cm} (0,T)\times \partial \Omega
\end{dcases}
\end{equation*}
can be written as
\begin{equation}
\left\|h\right\|_{\xLtwo\left(\left(0,T\right)\times\Omega\right)}^2=\left\|y_t\right\|_{\xLtwo((0,T)\times \Omega)}^2+\left\|-\Delta y+f\left(y\right)\right\|_{\xLtwo((0,T)\times \Omega)}^2+\mbox{remainder},
\end{equation}
where the remainder depends on the value of the solution at times $t=0$ and $t=T$.

\begin{lmm}\label{lemma_parab_L2_equality}
	Let $\Omega$ be a bounded open set of $\xR^n$, $n \in \left\{1,2,3\right\}$, with $\xCinfty$ boundary. Let $f\in \xCn{3}\left(\xR;\xR\right)$ be nondecreasing, with $f(0)=0$.
	Set $F\left(y\right)\coloneqq \int_0^y f(\xi)d\xi$. Let $y_0\in \xLinfty\left(\Omega\right)\cap \xHone_0\left(\Omega\right)$ be an initial datum and let $h\in \xLinfty((0,T)\times \Omega)$ be a source term. Let $y$ be the solution to
	\begin{equation}\label{semilinear_general_1}
	\begin{dcases}
	y_t-\Delta y+f\left(y\right)=h\hspace{2.8 cm} & \mbox{in} \hspace{0.10 cm}(0,T)\times\Omega\\
	y=0  & \mbox{on}\hspace{0.10 cm} (0,T)\times \partial \Omega\\
	y(0,x)=y_0(x)  & \mbox{in}\hspace{0.10 cm}  \Omega.\\
	\end{dcases}
	\end{equation}
	Then, the following identity holds
	\begin{align}\label{lemma_parab_L2_equality_eq1}
	\int_0^T\int_{\Omega}\left|h\right|^2dxdt&=\int_0^T\int_{\Omega}\left[\left|y_t\right|^2+\left|-\Delta y+f\left(y\right)\right|^2\right]dxdt\\
	&\;\hspace{0.33 cm}+\int_{\Omega}\left[\left\|\nabla y(T,x)\right\|^2+2F\left(y(T,x)\right)-\left\|\nabla y_0(x)\right\|^2-2F\left(y_0(x)\right)\right]dx.\nonumber
	\end{align}
\end{lmm}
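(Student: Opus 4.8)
The plan is to expand the square on the left-hand side of \eqref{lemma_parab_L2_equality_eq1} and recognize the resulting cross term as an exact time derivative. Rewriting the equation in \eqref{semilinear_general_1} as $h = y_t + \bigl(-\Delta y + f(y)\bigr)$ pointwise a.e.\ and squaring, one obtains, after integration over $(0,T)\times\Omega$, that \eqref{lemma_parab_L2_equality_eq1} is equivalent to
\[
\int_0^T\!\int_{\Omega} 2\,y_t\bigl(-\Delta y + f(y)\bigr)\,dx\,dt = \int_{\Omega}\Bigl[\,\|\nabla y(T,x)\|^2 + 2F(y(T,x)) - \|\nabla y_0(x)\|^2 - 2F(y_0(x))\,\Bigr]dx.
\]
To prove this I would integrate by parts in space: since $y(t,\cdot)\in\xHtwo(\Omega)\cap\xHone_0(\Omega)$ and, differentiating the boundary condition in time, $y_t(t,\cdot)\in\xHone_0(\Omega)$ for a.e.\ $t$, the boundary term vanishes and $\int_\Omega y_t(-\Delta y)\,dx = \int_\Omega \nabla y_t\cdot\nabla y\,dx$. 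Next I recognize the integrand pointwise: $2\,\nabla y_t\cdot\nabla y = \partial_t\|\nabla y\|^2$, while $2\,f(y)\,y_t = \partial_t\bigl(2F(y)\bigr)$ because $F\in\xCn{3}$ with $F' = f$. Hence for a.e.\ $t$,
\[
\int_\Omega 2\,y_t\bigl(-\Delta y + f(y)\bigr)\,dx = \frac{d}{dt}\Bigl[\,\|\nabla y(t)\|_{\xLtwo(\Omega)}^2 + 2\!\int_\Omega F(y(t))\,dx\,\Bigr],
\]
and the claimed identity follows from the fundamental theorem of calculus applied to this (absolutely continuous) function of $t$, using $y(0)=y_0$.

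All the substance lies in justifying these manipulations, that is, in the regularity of $y$. Under the standing hypotheses ($y_0\in\xLinfty(\Omega)\cap\xHone_0(\Omega)$, $h\in\xLinfty((0,T)\times\Omega)$, $\partial\Omega$ of class $\xCinfty$, $f\in\xCn{3}$ nondecreasing, $n\le 3$), a comparison argument first gives $y\in\xLinfty((0,T)\times\Omega)$, hence $f(y)\in\xLinfty((0,T)\times\Omega)$; treating $f(y)$ as a known source and applying standard linear parabolic regularity to $y_t-\Delta y = h - f(y)$ with $y_0\in\xHone_0(\Omega)$ then yields $y\in\xLtwo(0,T;\xHtwo(\Omega)\cap\xHone_0(\Omega))$, $y_t\in\xLtwo((0,T)\times\Omega)$ and $y\in\xCzero([0,T];\xHone_0(\Omega))$. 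With this regularity every step above is legitimate: $\nabla y\cdot\nabla y_t\in\xLone((0,T)\times\Omega)$, the map $t\mapsto\|\nabla y(t)\|_{\xLtwo(\Omega)}^2$ is absolutely continuous with a.e.\ derivative $2\int_\Omega\nabla y\cdot\nabla y_t\,dx$, and $t\mapsto\int_\Omega F(y(t))\,dx$ is absolutely continuous with a.e.\ derivative $\int_\Omega f(y)\,y_t\,dx$ (using $F'=f$, the Lipschitz bound on $f$ over the range of $y$, and the uniform $\xLinfty$ bound on $y$); moreover these quantities are continuous up to $t=0,T$, which gives meaning to the boundary terms.

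The main obstacle I anticipate is precisely this regularity and the rigorous justification of the differentiations, not any computation — in particular, verifying that $y_t(t,\cdot)\in\xHone_0(\Omega)$ for a.e.\ $t$ (so that the spatial integration by parts carries no boundary contribution) and that the energy $t\mapsto\|\nabla y(t)\|_{\xLtwo(\Omega)}^2 + 2\int_\Omega F(y(t))\,dx$ is genuinely absolutely continuous. The cleanest route is an approximation argument: take smooth $h_k\to h$ in $\xLtwo$ and smooth $y_{0,k}\to y_0$ in $\xHone_0$, with $\{h_k\}$ and $\{y_{0,k}\}$ uniformly bounded in $\xLinfty$, write \eqref{lemma_parab_L2_equality_eq1} for the corresponding solutions $y_k$ — classical, since the $y_k$ are smooth — and pass to the limit using continuous dependence of $y_k$ on the data in $\xLtwo(0,T;\xHtwo(\Omega)\cap\xHone_0(\Omega))\cap\xHone(0,T;\xLtwo(\Omega))\cap\xCzero([0,T];\xHone_0(\Omega))$; the convergence of the terms $F(y_k(T))$ and $F(y_{0,k})$ in $\xLone(\Omega)$ uses $\xHone_0(\Omega)\hookrightarrow\xLn{6}(\Omega)$ for $n\le 3$ together with the uniform $\xLinfty$ bounds. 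Once the regularity is secured, the identity is immediate from the computation above.
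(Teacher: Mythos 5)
Your proposal is correct and follows essentially the same route as the paper: expand $|h|^2=|y_t-\Delta y+f(y)|^2$, recognize the cross term $2\int y_t(-\Delta y+f(y))$ as the time derivative of $\int_\Omega\left[\|\nabla y\|^2+2F(y)\right]dx$ via spatial integration by parts and the chain rule, and then justify the computation for general data by a density argument based on parabolic regularity. Your discussion of the limiting procedure is in fact more detailed than the paper's, which simply cites standard parabolic regularity references for that step.
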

\begin{proof}[Proof of Lemma \ref{lemma_parab_L2_equality}]
	We start by proving our assertion for $\xCinfty$-smooth data, with compact support. By \cref{semilinear_general_1}, we have
	\begin{align}\label{lemma_parab_L2_equality_eq2}
	\int_0^T\int_{\Omega}\left|h\right|^2dxdt&=\int_0^T\int_{\Omega}\left|y_t-\Delta y+f\left(y\right)\right|^2dxdt\nonumber\\
	&=\int_0^T\int_{\Omega}\left[\left|y_t\right|^2+\left|-\Delta y+f\left(y\right)\right|^2\right]dxdt\nonumber\\
	&\;\hspace{0.33 cm}+2\int_0^T\int_{\Omega} y_t\left[-\Delta y+f\left(y\right)\right]dxdt.
	\end{align}
	We now concentrate on the terms $2\int_0^T\int_{\Omega}y_t\left[-\Delta y\right]dxdt$ and $2\int_0^T\int_{\Omega}y_tf\left(y\right)dxdt$. Integrating by parts in space,
	we get
	\begin{align}\label{lemma_parab_L2_equality_eq12}
	2\int_0^T\int_{\Omega}y_t\left[-\Delta y\right]dxdt&=\int_0^T\int_{\Omega}2\frac{\partial \nabla y}{\partial t}\cdot \nabla y dxdt\nonumber\\
	&=\int_{\Omega}\left[\left\|\nabla y(T,x)\right\|^2-\left\|\nabla y_0(x)\right\|^2\right]dx.
	\end{align}
	By using the chain rule 
	and the definition $F\left(y\right)\coloneqq \int_0^y f(\xi)d\xi$, we have
	\begin{equation}\label{lemma_parab_L2_equality_eq14}
	\int_0^T\int_{\Omega}y_tf\left(y\right)dxdt=\int_0^T\int_{\Omega}\frac{\partial}{\partial t}\left[F\left(y\right)\right]dxdt=\int_{\Omega}\left[F\left(y(T,x)\right)-F\left(y_0(x)\right)\right]dx.
	\end{equation}
	By \cref{lemma_parab_L2_equality_eq2}, \cref{lemma_parab_L2_equality_eq12} and \cref{lemma_parab_L2_equality_eq14}, we get \cref{lemma_parab_L2_equality_eq1}.\\
	The conclusion for general data follows from a density argument based on parabolic regularity (see \cite[Theorem 7.32 page 182]{lieberman1996second}, \cite[Theorem 9.1 page 341]{PEL} or \cite[Theorem 9.2.5 page 275]{wu2006elliptic}).
\end{proof}

We proceed now with the proof of Lemma \ref{lemma_functional rewritten}.

\begin{proof}[Proof of Lemma \ref{lemma_functional rewritten}]
	For any control $u\in \xLtwo((0,T)\times \omega)$, by Lemma \ref{lemma_parab_L2_equality} applied to \cref{semilinear_internal_6_slt}, we have
	\begin{align*}\label{lemma_functional rewritten_eq2}
	\frac12 \int_{0}^T\int_{\omega}\left|u\right|^2dxdt&=\int_0^T\int_{\Omega}\left[\left|y_t\right|^2+\left|-\Delta y+f\left(y\right)\right|^2\right]dxdt\\
	&\;\hspace{0.33 cm}+\int_{\Omega}\left[\left\|\nabla y(T,x)\right\|^2+2F\left(y(T,x)\right)-\left\|\nabla y_0(x)\right\|^2-2F\left(y_0(x)\right)\right]dx.
	\end{align*}
	whence
	\begin{align*}
	J_{T}(u)&=\frac12 \int_0^T\int_{\Omega} \left|-\Delta y +f\left(y\right)\right|^2 dxdt+\frac{\beta}{2}\int_0^T\int_{\omega_0}\left|y-z\right|^2dxdt\nonumber\\
	&\;\hspace{0.33 cm}+\frac12 \int_0^T\int_{\Omega} \left|y_t\right|^2 dxdt\nonumber\\
	&\;\hspace{0.33 cm}+\frac12\int_{\Omega}\left[\left\|\nabla y(T,x)\right\|^2+2F\left(y(T,x)\right)-\left\|\nabla y_0(x)\right\|^2-2F\left(y_0(x)\right)\right]dx.
	\end{align*}
	By the above equality and the definition of $J_s$ \cref{semilinear_internal_elliptic_1_slt}-\cref{steady_functional_slt}, formula \cref{lemma_functional rewritten_eq1} follows.
\end{proof}

The last Lemma needed to prove \Cref{theorem_convergence_averages} is the following one.

\begin{lmm}\label{lemma_bound_value_and_statetimederivative}
	Consider the time-evolution control problem \cref{semilinear_internal_1_slt}-\cref{functional_slt} and its steady version \cref{semilinear_internal_elliptic_1_slt}-\cref{steady_functional_slt}. Assume $\omega=\Omega$. Arbitrarily fix $y_0\in \xLinfty\left(\Omega\right)\cap \xHone_0\left(\Omega\right)$ an initial datum and $z\in \xLinfty(\omega_0)$ a target. Let $u^T$ be an optimal control for \cref{semilinear_internal_1_slt}-\cref{functional_slt} and let $y^T$ be the corresponding state, solution to \cref{semilinear_internal_1_slt}, with control $u^T$ and initial datum $y_0$. Then,
	\begin{enumerate}
		\item there exists a $T$-independent constant $K$ such that
		\begin{equation}\label{lemma_bound_value_and_statetimederivative_eq1}
		\left|\inf_{\xLtwo((0,T)\times \Omega)} J_{T}-T\inf_{\xLtwo\left(\Omega\right)}J_s\right|\leq K;
		\end{equation}
		\item the $\xLtwo$ norm of the time derivative of the optimal state is bounded uniformly in $T$
		\begin{equation}\label{lemma_bound_value_and_statetimederivative_eq2}
		\left\|y^T_t\right\|_{\xLtwo((0,T)\times \Omega)}\leq K,
		\end{equation}
		with $K$ independent of $T>0$.
	\end{enumerate}
\end{lmm}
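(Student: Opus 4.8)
The plan is to evaluate the representation formula of Lemma~\ref{lemma_functional rewritten} at the optimal control $u^T$ and to read off both assertions from it, combined with the upper bound of Lemma~\ref{lemma_upper_bound_value} and the $T$-uniform $\xLinfty$ bound of Lemma~\ref{lemma_bound_optima}. Two elementary preliminary facts will be used throughout. First, since $f$ is nondecreasing with $f(0)=0$, its primitive $F(y)=\int_0^y f(\xi)\xdif\xi$ is nonnegative on $\xR$; hence in the terminal term of \cref{lemma_functional rewritten_eq1} the ``initial-data part'' $C_0\coloneqq\frac12\int_{\Omega}\big[\left\|\nabla y_0(x)\right\|^2+2F(y_0(x))\big]\xdif x$ is a finite, $T$-independent constant (finiteness uses $y_0\in\xHone_0(\Omega)\cap\xLinfty(\Omega)$), while the ``time-$T$ part'' $\frac12\int_{\Omega}\big[\left\|\nabla y^T(T,x)\right\|^2+2F(y^T(T,x))\big]\xdif x$ is nonnegative. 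Second, $\inf_{\xLtwo(\Omega)}J_s$ is a finite nonnegative number, since $J_s\geq 0$ and $J_s(0)=\frac{\beta}{2}\|z\|_{\xLtwo(\omega_0)}^2<\infty$.

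To prove (1), I would obtain the two-sided estimate separately. The upper bound $\inf_{\xLtwo((0,T)\times\Omega)}J_T\leq T\inf_{\xLtwo(\Omega)}J_s+K$ is precisely Lemma~\ref{lemma_upper_bound_value} with $\omega=\Omega$. For the reverse bound I apply Lemma~\ref{lemma_functional rewritten} to $u=u^T$: in \cref{lemma_functional rewritten_eq1} I drop the nonnegative term $\frac12\int_0^T\int_{\Omega}|y^T_t|^2$, bound the terminal term below by $-C_0$ (using $F\geq 0$ and $\left\|\nabla y^T(T,\cdot)\right\|^2\geq 0$), and use that for a.e.\ $t\in(0,T)$ the function $-\Delta y^T(t,\cdot)+f(y^T(t,\cdot))$ is an admissible control for the steady problem, so that $J_s\big(-\Delta y^T(t,\cdot)+f(y^T(t,\cdot))\big)\geq\inf_{\xLtwo(\Omega)}J_s$; integrating over $(0,T)$ gives $\inf_{\xLtwo((0,T)\times\Omega)}J_T=J_T(u^T)\geq T\inf_{\xLtwo(\Omega)}J_s-C_0$. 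The two bounds together give \cref{lemma_bound_value_and_statetimederivative_eq1} with constant $\max\{K,C_0\}$.

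To prove (2), I would isolate $\frac12\|y^T_t\|_{\xLtwo((0,T)\times\Omega)}^2$ in \cref{lemma_functional rewritten_eq1} evaluated at $u^T$. On the right-hand side: $J_T(u^T)=\inf J_T\leq T\inf_{\xLtwo(\Omega)}J_s+K$ by Lemma~\ref{lemma_upper_bound_value}; the subtracted integral $\int_0^T J_s\big(-\Delta y^T(t,\cdot)+f(y^T(t,\cdot))\big)\xdif t$ is $\geq T\inf_{\xLtwo(\Omega)}J_s$; the subtracted time-$T$ part is $\geq 0$; and the added initial-data part equals $C_0$. The two occurrences of $T\inf_{\xLtwo(\Omega)}J_s$ cancel, leaving $\frac12\|y^T_t\|_{\xLtwo((0,T)\times\Omega)}^2\leq K+C_0$, which is \cref{lemma_bound_value_and_statetimederivative_eq2}.

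The main (and essentially only) delicate point is the justification that makes the above legitimate: that Lemma~\ref{lemma_functional rewritten} applies to $u^T$ and that $-\Delta y^T(t,\cdot)+f(y^T(t,\cdot))\in\xLtwo(\Omega)$ defines an admissible steady control for a.e.\ $t$, i.e.\ that $y^T(t,\cdot)\in\xHtwo(\Omega)\cap\xHone_0(\Omega)$ for a.e.\ $t$. This is where Lemma~\ref{lemma_bound_optima} enters: it yields $u^T,y^T\in\xLinfty$ uniformly in $T$, hence $u^T-f(y^T)\in\xLinfty\subset\xLtwo$, and together with $y_0\in\xHone_0(\Omega)$ and the parabolic regularity quoted in Lemma~\ref{lemma_parab_L2_equality} this gives $y^T\in\xLtwo(0,T;\xHtwo(\Omega)\cap\xHone_0(\Omega))\cap\xHn{1}(0,T;\xLtwo(\Omega))$, exactly the regularity required. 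All remaining manipulations are routine bookkeeping.
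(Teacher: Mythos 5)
Your proposal is correct and follows essentially the same route as the paper: evaluate the representation formula of Lemma~\ref{lemma_functional rewritten} at $u^T$, discard the nonnegative time-$T$ boundary term, bound the pointwise steady cost below by $\inf_{\xLtwo\left(\Omega\right)}J_s$, and combine the resulting lower bound with the upper bound of Lemma~\ref{lemma_upper_bound_value} to obtain both (1) and (2). Your explicit justification of the regularity needed to apply Lemma~\ref{lemma_functional rewritten} to the optimal pair is a welcome addition that the paper leaves implicit.
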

\begin{proof}[Proof of Lemma \ref{lemma_bound_value_and_statetimederivative}]
	\textit{Step 1} \  \textbf{Proof of
		\begin{equation*}
		\inf_{\xLtwo((0,T)\times \Omega)}J_{T}=J_{T}\left(u^T\right)\geq T \inf_{\xLtwo\left(\Omega\right)}J_s +\frac12 \int_0^T\int_{\Omega} \left|y_t^T(t,x)\right|^2 dxdt-\frac12\int_{\Omega}\left[\left\|\nabla y_0(x)\right\|^2+2F\left(y_0(x)\right)\right]dx.
		\end{equation*}}\\
	We start observing that, since the nonlinearity $f$ is nondecreasing and $f(0)=0$, the primitive $F$ is nonnegative
	\begin{equation}\label{lemma_bound_value_and_statetimederivative_eq3}
	F\left(y\right)\geq 0, \hspace{2.6 cm}\forall \ y\in\mathbb{R}.
	\end{equation}
	
	Let $u^T$ be an optimal control for \cref{semilinear_internal_1_slt}-\cref{functional_slt} and let $y^T$ be the corresponding state, solution to \cref{semilinear_internal_1_slt}, with control $u^T$ and initial datum $y_0$. By Lemma \ref{lemma_functional rewritten} and \cref{lemma_bound_value_and_statetimederivative_eq3}, we have
	\begin{align}\label{lemma_bound_value_and_statetimederivative_eq4}
	J_{T}\left(u^T\right)&= \int_0^T J_s\big(-\Delta y^T(t,\cdot) +f\left(y^T(t,\cdot)\right)\big) dt\nonumber\\
	&\;\hspace{0.33 cm}+\frac12 \int_0^T\int_{\Omega} \left|y_t^T(t,x)\right|^2 dxdt\nonumber\\
	&\;\hspace{0.33 cm}+\frac12\int_{\Omega}\left[\left\|\nabla y^T(T,x)\right\|^2+2F\left(y^T(T,x)\right)-\left\|\nabla y_0(x)\right\|^2-2F\left(y_0(x)\right)\right]dx\nonumber\\
	&\geq \int_0^T J_s\big(-\Delta y^T(t,\cdot) +f\left(y^T(t,\cdot)\right)\big) dt\nonumber\\
	&\;\hspace{0.33 cm}+\frac12 \int_0^T\int_{\Omega} \left|y_t^T(t,x)\right|^2 dxdt\nonumber\\
	&\;\hspace{0.33 cm}-\frac12\int_{\Omega}\left[\left\|\nabla y_0(x)\right\|^2+2F\left(y_0(x)\right)\right]dx.\nonumber\\
	\end{align}
	Now, for a.e. $t\in (0,T)$, by definition of infimum
	\begin{equation*}
	J_s\big(-\Delta y^T(t,\cdot) +f\left(y^T(t,\cdot)\right)\big)\geq \inf_{\xLtwo\left(\Omega\right)}J_s.
	\end{equation*}
	The above inequality and \cref{lemma_bound_value_and_statetimederivative_eq4} yield
	\begin{align*}
	J_{T}\left(u^T\right)&\geq \int_0^T J_s\big(-\Delta y^T(t,\cdot) +f\left(y^T(t,\cdot)\right)\big) dt\nonumber\\
	&\;\hspace{0.33 cm}+\frac12 \int_0^T\int_{\Omega} \left|y_t^T(t,x)\right|^2 dxdt-\frac12\int_{\Omega}\left[\left\|\nabla y_0(x)\right\|^2+2F\left(y_0(x)\right)\right]dx\nonumber\\
	&\geq \int_0^T \left[\inf_{\xLtwo\left(\Omega\right)}J_s\right] dt+\frac12 \int_0^T\int_{\Omega} \left|y_t^T(t,x)\right|^2 dxdt-\frac12\int_{\Omega}\left[\left\|\nabla y_0(x)\right\|^2+2F\left(y_0(x)\right)\right]dx\nonumber\\
	&= T \inf_{\xLtwo\left(\Omega\right)}J_s +\frac12 \int_0^T\int_{\Omega} \left|y_t^T(t,x)\right|^2 dxdt-\frac12\int_{\Omega}\left[\left\|\nabla y_0(x)\right\|^2+2F\left(y_0(x)\right)\right]dx,\nonumber\\
	\end{align*}
	whence
	\begin{equation}\label{lemma_bound_value_and_statetimederivative_eq6}
	\inf_{\xLtwo((0,T)\times \Omega)}J_{T}=J_{T}\left(u^T\right)\geq T \inf_{\xLtwo\left(\Omega\right)}J_s +\frac12 \int_0^T\int_{\Omega} \left|y_t^T(t,x)\right|^2 dxdt-\frac12\int_{\Omega}\left[\left\|\nabla y_0(x)\right\|^2+2F\left(y_0(x)\right)\right]dx.
	\end{equation}
	\textit{Step 2} \  \textbf{Conclusion}\\
	On the one hand, by Lemma \ref{lemma_upper_bound_value}, we have
	\begin{equation}\label{lemma_bound_value_and_statetimederivative_eq7}
	\inf_{\xLtwo((0,T)\times \Omega)} J_{T}-T\inf_{\xLtwo\left(\Omega\right)}J_s\leq  K,
	\end{equation}
	the constant $K$ being independent of $T>0$.
	On the other hand, by \cref{lemma_bound_value_and_statetimederivative_eq6}, we get
	\begin{equation}\label{lemma_bound_value_and_statetimederivative_eq8}
	\inf_{\xLtwo((0,T)\times \Omega)} J_{T}-T\inf_{\xLtwo\left(\Omega\right)}J_s\geq  -K.
	\end{equation}
	By \cref{lemma_bound_value_and_statetimederivative_eq7} and \cref{lemma_bound_value_and_statetimederivative_eq8}, inequality \cref{lemma_bound_value_and_statetimederivative_eq1} follows.
	
	It remains to prove \cref{lemma_bound_value_and_statetimederivative_eq2}. By \cref{lemma_bound_value_and_statetimederivative_eq6} and Lemma \ref{lemma_upper_bound_value}, we have
	\begin{equation*}
	T \inf_{\xLtwo\left(\Omega\right)}J_s +\frac12 \int_0^T\int_{\Omega} \left|y_t^T(t,x)\right|^2 dxdt-K\leq \inf_{\xLtwo((0,T)\times \Omega)}J_{T}\leq T\inf_{\xLtwo\left(\Omega\right)}J_s + K,
	\end{equation*}
	whence
	\begin{equation*}
	\frac12 \int_0^T\int_{\Omega} \left|y_t^T(t,x)\right|^2 dxdt\leq K,
	\end{equation*}
	as required.
\end{proof}

We are now ready to prove \Cref{theorem_convergence_averages}.

\begin{proof}[Proof of \Cref{theorem_convergence_averages}]
	Estimate \cref{theorem_convergence_averages_eq2} follows directly from Lemma \ref{lemma_bound_value_and_statetimederivative} (2.).
	
	It remains to prove the convergence of the averages. By the regularizing effect of the state equation \cref{semilinear_internal_1_slt} and Lemma \ref{lemma_bound_optima}, we can reduce to the case of initial datum $y_0\in \xLinfty\left(\Omega\right)\cap \xHone_0\left(\Omega\right)$. By Lemma \ref{lemma_bound_value_and_statetimederivative}, we have
	\begin{equation}\label{theorem_convergence_averages_eq3}
	\left|\inf_{\xLtwo((0,T)\times \Omega)} J_{T}-T\inf_{\xLtwo\left(\Omega\right)}J_s\right|\leq K.
	\end{equation}
	Then,
	\begin{align*}
	\left|\frac{1}{T}\inf_{\xLtwo((0,T)\times \Omega)} J_{T}-\inf_{\xLtwo\left(\Omega\right)}J_s\right|&=\frac{1}{T}\left|\inf_{\xLtwo((0,T)\times \Omega)} J_{T}-T\inf_{\xLtwo\left(\Omega\right)}J_s\right|\nonumber\\
	&\leq \frac{K}{T} \underset{T\to +\infty}{\longrightarrow}0,
	\end{align*}
	as required.
\end{proof}

\section{Numerical simulations}
\label{sec:5.4}

This section is devoted to a numerical illustration of \Cref{th_slt_1}. Our goal is to check that the turnpike property is fulfilled for small target, regardless of the size of the initial datum.

We deal with the optimal control problem
\begin{equation*}\label{functional_example}
\min_{u\in \xLtwo((0,T)\times (0,\frac12))}J_{T}(u)=\frac12 \int_0^T\int_{0}^{\frac12} |u|^2 dxdt+\frac{\beta}{2}\int_0^T\int_{0}^1 |y-z|^2 dxdt,
\end{equation*}
where:
\begin{equation*}\label{semilinear_internal_1_example}
\begin{dcases}
y_t-y_{xx}+y^3=u\chi_{(0,\frac12)}\hspace{2.8 cm} & (t,x)\in (0,T)\times (0,1)\\
y(t,0)=y(t,1)=0  & t\in (0,T)\\
y(0,x)=y_0(x)  & x\in  (0,1).
\end{dcases}
\end{equation*}
We choose as initial datum $y_0\equiv 10$, as weighting parameter $\beta = 1000$ and as target $z\equiv 1$.

We solve the above semilinear heat equation by using the semi-implicit method:
\begin{equation*}
\begin{dcases}
\frac{Y_{i+1}-Y_{i}}{\Delta t}-\Delta Y_{i+1}+Y_{i}^3=U_{i}\chi_{(0,\frac12)}\hspace{0.6 cm} &  i=0,\dots,N_t-1\\
Y_{0}=y_0, &
\end{dcases}
\end{equation*}
where $Y_i$ and $U_i$ denote resp. a time discretization of the state and the control.

The optimal control is determined by a Gradient Descent method, with constant stepsize. The optimal state is depicted in \cref{yopty010}.

\begin{figure}[htp]
	\begin{center}
		\centerline{\includegraphics[width=17 cm]{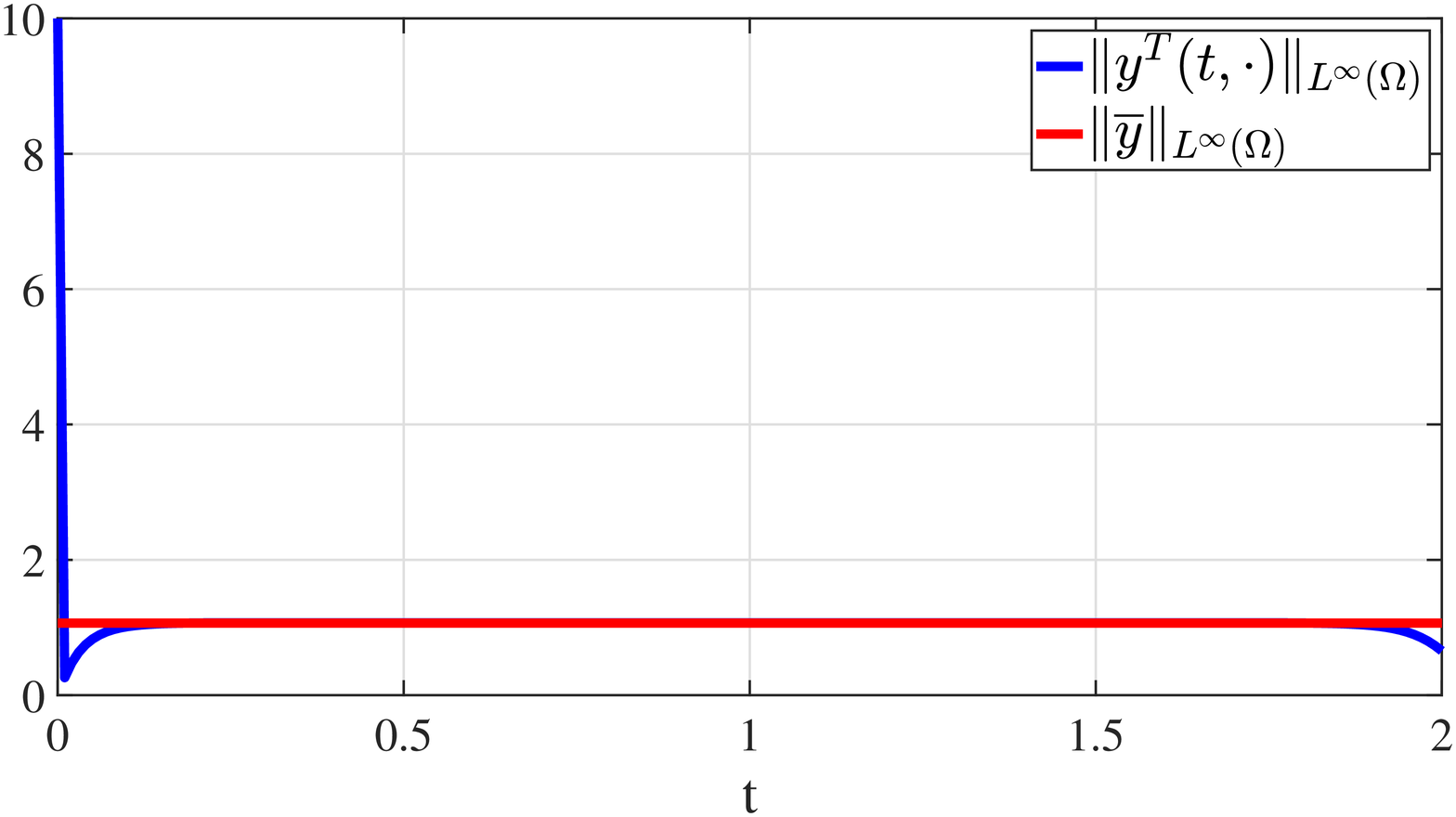}}
		\caption{graph of the function $t\longrightarrow \|y^T(t)\|_{\xLinfty\left(\Omega\right)}$ (in blue) and $\|\overline{y}\|_{\xLinfty\left(\Omega\right)}$ (in red), where $y^T$ denotes an optimal state, whereas $\overline{y}$ stands for an optimal steady state.}\label{yopty010}
	\end{center}
\end{figure}

\begin{figure}[htp]
	\begin{center}
		\centerline{\includegraphics[width=17 cm]{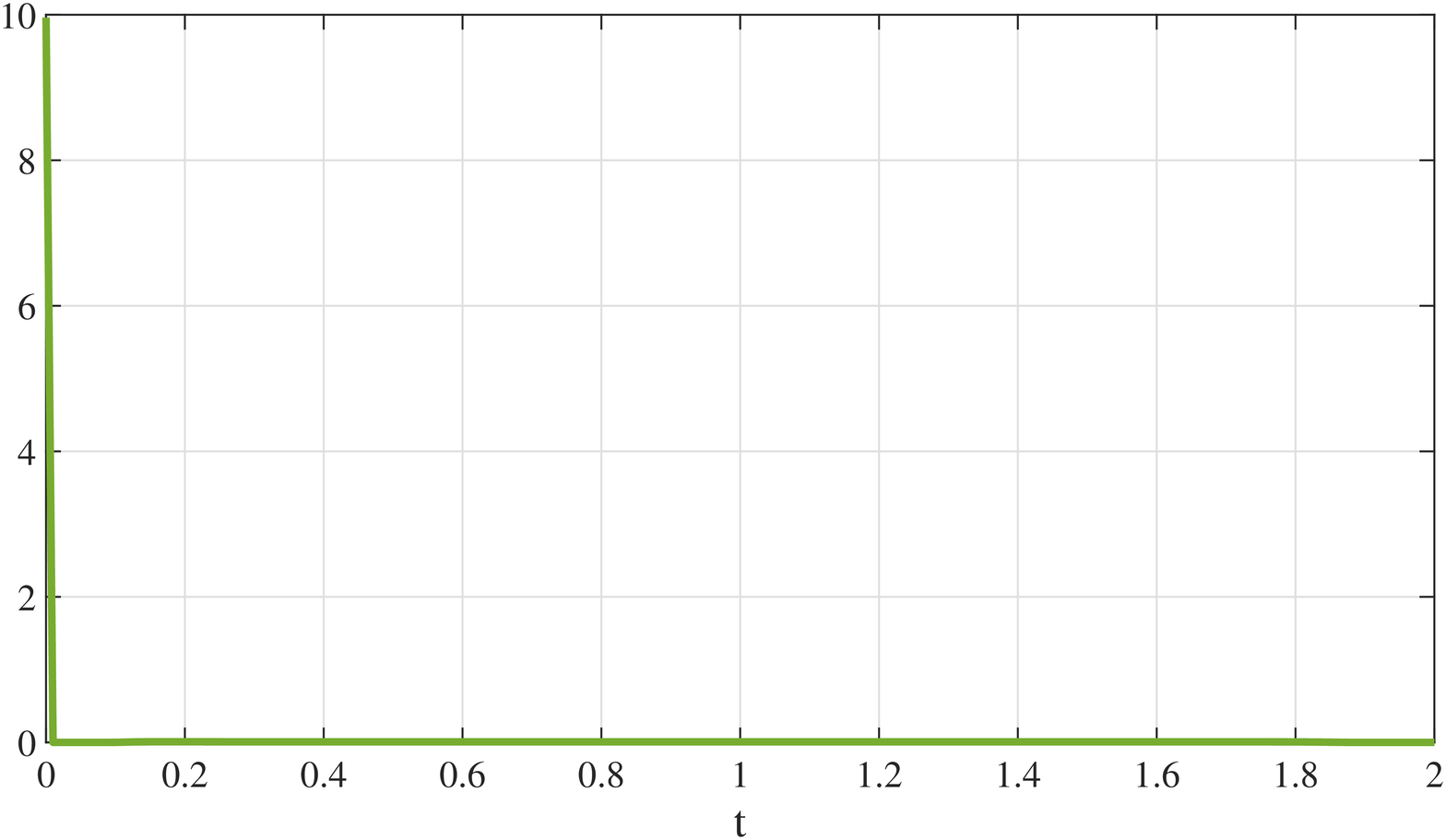}}
		\caption{graph of the function $t\longrightarrow \left\|y^T(t)-\overline{y}\right\|_{\xLinfty\left(\Omega\right)}$, where $y^T$ denotes an optimal state, whereas $\overline{y}$ stands for an optimal steady state.}\label{evostatesteadystate}
	\end{center}
\end{figure}

\section{Conclusions and open problems}
\label{sec:5.5}

In this manuscript we have obtained some global turnpike results for an optimal control problem governed by a nonlinear state equation. For any data and small targets, we have shown that the exponential turnpike property holds (\Cref{th_slt_1}). For arbitrary targets, we have proved the convergence of averages (\Cref{theorem_convergence_averages}), under the added assumption of controlling everywhere. One of the main tools employed for our analysis is an $\xLinfty$ bound of the norm of the optima, uniform in the time horizon (Lemma \ref{lemma_bound_optima}). Numerical simulation have been performed, which confirms the theoretical results.

We present now some interesting open problems in the field.

\subsection{General targets with any control domain}
\label{subsec:5.5.1}

In \Cref{theorem_convergence_averages} we have proved the convergence of averages for large targets, in the context of control everywhere. An interesting challenge is to prove the exponential turnpike property, even if the control is local (namely $\omega\subsetneq \Omega$). The challenge is to prove the following conjecture.

\begin{cnjctr}\label{conj_turnpike}
	Consider the control problem \cref{semilinear_internal_1_slt}-\cref{functional_slt}. Take any initial datum $y_0\in \xLinfty\left(\Omega\right)$ and any target $z\in \xLinfty(\omega_0)$. Let $u^T$ be a minimizer of \cref{functional_slt}. There exists an optimal pair $(\overline{u},\overline{y})$ for \cref{semilinear_internal_elliptic_1_slt}-\cref{steady_functional_slt} such that
	\begin{equation}
	\|u^T(t)-\overline{u}\|_{\xLinfty\left(\omega\right)}+\|y^T(t)-\overline{y}\|_{\xLinfty\left(\Omega\right)}\leq K\left[\exp\left(-\mu t\right)+\exp\left(-\mu(T-t)\right)\right],\hspace{0.6 cm}\forall t\in [0,T],
	\end{equation}
	the constants $K$ and $\mu>0$ being independent of the time horizon $T$.
\end{cnjctr}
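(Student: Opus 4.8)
A possible route toward \Cref{conj_turnpike} would proceed by reproducing the three-stage architecture of the proof of \Cref{th_slt_1} — a $T$-uniform bound, a local turnpike, and a global-to-local attractor argument — replacing each step that exploited smallness of the target by a version resting only on a second-order non-degeneracy condition at the steady optimum. The first stage is already available: Lemma \ref{lemma_bound_optima} provides, for arbitrary $y_0$ and $z$, a $T$-independent $\xLinfty$ bound on $(u^T,y^T)$, so one may work throughout in a fixed bounded set of states and controls.

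For the local stage, fix an optimal pair $(\overline{u},\overline{y})$ for \cref{semilinear_internal_elliptic_1_slt}-\cref{steady_functional_slt} and linearize the optimality system \cref{semilinear_internal_parabolic_1} around it. The obstruction relative to Lemma \ref{lemma_3} is that for a large target the steady problem need not be convex near $\overline{u}$, so the linearized control system need not be stabilizable and the algebraic Riccati equation of Remark \ref{remark_Riccati} need not admit a stabilizing solution. The natural fix is to work under — or, in favourable geometries, to establish — the second-order sufficient optimality condition $\langle d^2J_s(\overline{u})w,w\rangle\geq\alpha\|w\|_{\xLtwo(\omega)}^2$, together with the detectability furnished by unique continuation for $-\Delta+f'(\overline{y})$ from $\omega_0$. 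Under such hypotheses the analysis of \cite{PZ2,porretta2013long} applies to the linearization, and the fixed-point argument of \cite[section 3]{PZ2} upgrades it to the nonlinear system, yielding $T$-independent constants $r>0$ and $\mu>0$ such that, whenever $\|y_0-\overline{y}\|_{\xLinfty(\Omega)}\leq r$, every optimal pair satisfies the exponential turnpike estimate of the conjecture on $[0,T]$.

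For the global stage, one must show that $y^T(t)$ reaches the ball $\{\|y-\overline{y}\|_{\xLinfty(\Omega)}\leq r\}$ at some critical time $t_s$ bounded uniformly in $T$. Following Lemma \ref{lemma_opt_est}, split $J_T(u^T)$ at $t_s$: on $[0,t_s]$ bound the integrand from below using that the pair stays at distance at least $r$ from the steady optimum — this is where one needs a \emph{strict} minimality property of the steady value, namely that $J_s(u_s)$ exceeds $\inf_{\xLtwo(\omega)}J_s$ by a fixed amount whenever $u_s$ is far from the set of steady optima; on $[t_s,T]$ invoke the local stage together with Lemma \ref{lemma_estimate_functional_slt}; and compare against the upper bound $J_T(u^T)\leq T\inf_{\xLtwo(\omega)}J_s+K$ of Lemma \ref{lemma_upper_bound_value}. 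This forces $t_s\leq\tau$ with $\tau=\tau(\Omega,\beta,y_0)$. Finally, concatenate exactly as in \Cref{subsec:5.2.2}: by Bellman's principle $u^T|_{(t_s,T)}$ is optimal for the problem started from $y^T(t_s)$, the local stage applies on $[t_s,T]$, and the shift of the exponents is absorbed through $e^{\mu t_s}\leq e^{\mu\tau}$, as in the proof of \Cref{th_slt_1}.

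The hard part is not the bookkeeping but the two structural inputs. First, for a large target $J_s$ may be genuinely non-convex, with several — or even a continuum of — steady optima, so the second-order condition underlying the local stage can fail; and even when each steady optimum is non-degenerate, one must identify \emph{which} optimum the time-evolution trajectory selects and rule out hesitation between them. Second, and more seriously, when $\omega\subsetneq\Omega$ there is no analogue of the representation formula of Lemma \ref{lemma_functional rewritten}, so the cheap argument giving $\int_0^T\|y^T_t\|_{\xLtwo(\Omega)}^2\leq K$ is unavailable; the quantitative lower bound on the transient cost needed in the global stage must instead be produced from an observability- or dissipativity-type inequality for the locally controlled system, and proving such an inequality with a $T$-independent constant is, in effect, the real content of the conjecture.
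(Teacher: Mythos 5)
This statement is presented in the paper as \Cref{conj_turnpike}, an \emph{open conjecture} in the concluding section; the paper contains no proof of it, and instead explains why it is currently out of reach. Your proposal is therefore not comparable to a proof in the paper — and, to your credit, it is not a proof either: it is a strategy outline that correctly transplants the three-stage architecture of \Cref{th_slt_1} (the $T$-uniform bound of Lemma \ref{lemma_bound_optima}, a local turnpike in the spirit of Lemma \ref{lemma_3}, and the global attractor argument of Lemma \ref{lemma_opt_est}), but leaves unproven exactly the two structural inputs on which everything hinges. These coincide with the obstructions the paper itself identifies: for the local stage, the linearized optimality system \cref{semilinear_internal_parabolic_linearized_1_intro} involves the coefficient $\beta\chi_{\omega_0}-f^{\prime\prime}\left(\overline{y}\right)\overline{q}$, whose sign is unknown for large targets, so the Riccati-based exponential estimate of \cite[section 3]{PZ2} invoked in Remark \ref{remark_Riccati} is not available; moreover the steady optimum need not be unique (the paper cites \cite{pighin2020nonuniqueness} for explicit counterexamples with $\omega\subsetneq\Omega$), so one cannot even fix the candidate turnpike without further argument. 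For the global stage, the lower bound on the transient cost used in \cref{lemma_opt_est_eq11} relies on the smallness of the target (through $\inf J_s\leq K_1\delta_\varepsilon^2/2$), and for large targets with local control there is no substitute: the representation formula of Lemma \ref{lemma_functional rewritten} requires $\omega=\Omega$.

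In short, the gap is not a flaw in your reasoning but the absence of the two key lemmas your plan presupposes: a $T$-independent exponential estimate for the linearized system around a (possibly non-unique, possibly degenerate) steady optimum, and a quantitative coercivity or dissipativity inequality forcing the optimal trajectory toward the steady optimum when the target is large and the control is localized. Supplying either with $T$-independent constants is, as you yourself observe, the actual content of the conjecture; until then the statement remains unproven, and your text should be read as a research programme rather than a proof.
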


In \cite{pighin2020nonuniqueness} special large targets $z$ are constructed, such that the optimal control for the steady problem \cref{semilinear_internal_elliptic_1_slt}-\cref{steady_functional_slt} is not unique. For those targets, a question arises: if the turnpike property is satisfied, which minimizer for \cref{semilinear_internal_elliptic_1_slt}-\cref{steady_functional_slt} attracts the optimal solutions to \cref{semilinear_internal_1_slt}-\cref{functional_slt}?\\
Note that, in the context of internal control, the counterexample to uniqueness in \cite{pighin2020nonuniqueness} is valid in case of local control $\omega\subsetneq \Omega$.

Generally speaking a further investigation is required for the linearized optimality system determined in \cite[subsection 3.1]{PZ2}. We introduce the problem. As in \cref{semilinear_internal_parabolic_1}, consider the optimality system for \cref{semilinear_internal_1_slt}-\cref{functional_slt}
\begin{equation}\label{semilinear_internal_parabolic_1_intro}
\begin{cases}
y^T_t-\Delta y^T+f(y^T)=-q^T\chi_{\omega}\hspace{2.8 cm} & \mbox{in} \hspace{0.10 cm}(0,T)\times\Omega\\
y^T=0  & \mbox{on}\hspace{0.10 cm} (0,T)\times \partial \Omega\\
y^T(0,x)=y_0(x)  & \mbox{in}\hspace{0.10 cm}  \Omega\\
-q^T_t-\Delta q^T+f^{\prime}(y^T)q^T=\beta(y^T-z)\chi_{\omega_0}\hspace{2.8 cm} & \mbox{in} \hspace{0.10 cm}(0,T)\times \Omega\\
q^T=0  & \mbox{on}\hspace{0.10 cm} (0,T)\times\partial \Omega\\
q^T(T,x)=0 & \mbox{in} \hspace{0.10 cm}\Omega.\\
\end{cases}
\end{equation}

Pick any optimal pair $(\overline{u},\overline{y})$ for \cref{semilinear_internal_elliptic_1_slt}-\cref{steady_functional_slt}. By the first order optimality conditions, the steady optimal control reads as $\overline{u}=-\overline{q}\chi_{\omega}$, with
\begin{equation}\label{semilinear_internal_elliptic_2_intro}
\begin{cases}
-\Delta \overline{y}+f(\overline{y})=-\overline{q}\chi_{\omega}\hspace{2.8 cm} & \mbox{in} \hspace{0.10 cm}\Omega\\
\overline{y}=0  & \mbox{on}\hspace{0.10 cm} \partial \Omega\\
-\Delta \overline{q}+f^{\prime}(\overline{y})\overline{q}=\beta(\overline{y}-z)\chi_{\omega_0}\hspace{2.8 cm} & \mbox{in} \hspace{0.10 cm}\Omega\\
\overline{q}=0  & \mbox{on}\hspace{0.10 cm} \partial \Omega.
\end{cases}
\end{equation}

As in \cite{PZ2}, we introduce the perturbation variables
\begin{equation}
\eta^T\coloneqq y^T-\overline{y}\hspace{0.3 cm}\mbox{and}\hspace{0.3 cm}\varphi^T\coloneqq q^T-\overline{q}
\end{equation}
and we write down the linearized optimality system around $(\overline{u},\overline{y})$
\begin{equation}\label{semilinear_internal_parabolic_linearized_1_intro}
\begin{cases}
\eta^T_t-\Delta \eta^T+f^{\prime}(\overline{y})\eta^T=-\varphi^T\chi_{\omega}\hspace{2.8 cm} & \mbox{in} \hspace{0.10 cm}(0,T)\times\Omega\\
\eta^T=0  & \mbox{on}\hspace{0.10 cm} (0,T)\times \partial \Omega\\
\eta^T(0,x)=y_0(x)-\overline{y}(x)  & \mbox{in}\hspace{0.10 cm}  \Omega\\
-\varphi^T_t-\Delta \varphi^T+f^{\prime}(\overline{y})\varphi^T=\left(\beta\chi_{\omega_0}-f^{\prime\prime}\left(\overline{y}\right)\overline{q}\right)\eta^T\hspace{2.8 cm} & \mbox{in} \hspace{0.10 cm}(0,T)\times \Omega\\
\varphi^T=0  & \mbox{on}\hspace{0.10 cm} (0,T)\times\partial \Omega\\
\varphi^T(T,x)=-\overline{q}(x) & \mbox{in} \hspace{0.10 cm}\Omega.\\
\end{cases}
\end{equation}

As pointed out in \cite[Theorem 1 in subsection 3.1]{PZ2}, a key point is to check the validity of the turnpike property for the linearized optimality system \cref{semilinear_internal_parabolic_linearized_1_intro}. This is complicated because of the term $\beta\chi_{\omega_0}-f^{\prime\prime}\left(\overline{y}\right)\overline{q}$, whose sign is unknown for general large targets. Furthermore, in case of nonuniqueness of steady optimum, it would be interesting to compute the spectrum of the linearized system around any steady optima to check if among them one is a better attractor.

We conclude this subsection observing that, even in case the control acts everywhere ($\omega=\Omega$), theory is not conclusive. Indeed, our results (Theorem \ref{theorem_convergence_averages} and Lemma \ref{lemma_bound_value_and_statetimederivative}) provides information on the performances of the steady controls and the estimate of the $\xLtwo$ norm of the time derivative of the optimal state. The proof of Conjecture \ref{conj_turnpike} in this case would require the use of Hamilton-Jacobi techniques (see e.g. \cite[Theorem 7.4.17]{cannarsa2004semiconcave}) to carry over our results to the optimal control and states by a feedback operator.

\subsection{Different nonlinear state equations}
\label{subsec:5.5.2}

It would be interesting to check the validity of the turnpike property, for different state equations, e.g. hyperbolic PDEs. This has been done in the linear case \cite{porretta2013long,zuazua2017large,grune2019sensitivity}. To address the nonlinear case the scheme we have employed can be used (uniform estimates for the optima, linearization and global-local argument). However, appropriate modifications have to be made to the proofs according to regularity properties of the state equation.

\appendix

%
%


\section{Parabolic regularity results}
\label{appendixsec:Parabolic regularity results}

One of the key tool to carry on the proof of Lemma \ref{lemma_bound_optima} is the following regularity result.

\begin{lmm}\label{lemma_reg_nonnegativepotential}
	Let $\Omega$ be a bounded open set of $\xR^n$, $n \in \left\{1,2,3\right\}$, with $\xCtwo$ boundary. Let $c:(0,T)\times \Omega\longrightarrow \mathbb{R}$ be measurable and nonnegative. Let $y_0\in \xLinfty\left(\Omega\right)$ be an initial datum and let $h\in \xLtwo((0,T)\times \Omega)$ be a source term. Let $y$ be the solution to
	\begin{equation*}
	\begin{dcases}
	y_t-\Delta y+cy=h\hspace{2.8 cm} & \mbox{in} \hspace{0.10 cm}(0,T)\times\Omega\\
	y=0  & \mbox{on}\hspace{0.10 cm} (0,T)\times \partial \Omega\\
	y(0,x)=y_0(x)  & \mbox{in}\hspace{0.10 cm}  \Omega.\\
	\end{dcases}
	\end{equation*}
	Choose $y_0$ and $h$ so that $cy\in \xLtwo\left(\left(0,T\right)\times \Omega\right)$. Then, $y\in \xLtwo\left((0,T); \xLinfty\left(\Omega\right)\right)$ and we have
	\begin{equation}\label{estimate_norm_L2Linf}
	\|y\|_{\xLtwo\left((0,T); \xLinfty\left(\Omega\right)\right)}\leq K\left[\|y_0\|_{\xLinfty\left(\Omega\right)}+\|h\|_{\xLtwo\left(\left(0,T\right)\times\Omega\right)}\right],
	\end{equation}
	where $K=K\left(\Omega\right)$.
\end{lmm}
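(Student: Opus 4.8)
The plan is to exploit the sign condition $c\geq 0$ to compare $y$ with the solution of the plain heat equation, and then to estimate the Dirichlet heat semigroup. The hypothesis $cy\in\xLtwo((0,T)\times\Omega)$ serves only to make $y$ the standard variational solution: indeed $y_t-\Delta y=h-cy$ has right-hand side in $\xLtwo$, so $y\in\xLtwo(0,T;\xHone_0(\Omega))\cap\xCzero([0,T];\xLtwo(\Omega))$ and the parabolic comparison principle applies to it. First I would introduce $Y$, the solution of $Y_t-\Delta Y=|h|$ in $(0,T)\times\Omega$, $Y=0$ on $(0,T)\times\partial\Omega$, $Y(0,\cdot)=|y_0|$; note $Y\geq 0$. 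Since $c\geq 0$, the function $d\coloneqq y-Y$ satisfies $d_t-\Delta d+cd=h-|h|-cY\leq 0$ with $d(0,\cdot)=y_0-|y_0|\leq 0$, so the maximum principle for $\partial_t-\Delta+c$ gives $y\leq Y$; likewise $-y-Y$ is a subsolution with nonpositive data, so $-y\leq Y$. Hence $|y|\leq Y$ a.e., so $\|y(t)\|_{\xLinfty(\Omega)}\leq\|Y(t)\|_{\xLinfty(\Omega)}$ and it suffices to bound $\|Y\|_{\xLtwo(0,T;\xLinfty(\Omega))}$ by $K(\Omega)[\|y_0\|_{\xLinfty(\Omega)}+\|h\|_{\xLtwo((0,T)\times\Omega)}]$ with $K$ independent of $T$.

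To estimate $Y$, I would write $Y=Y_1+Y_2$, the responses to the initial datum and to the source respectively. For $Y_1$ (homogeneous heat equation with datum $|y_0|$), the maximum principle gives $\|Y_1(t)\|_{\xLinfty(\Omega)}\leq\|y_0\|_{\xLinfty(\Omega)}$, the Poincar\'e inequality gives $\|Y_1(t)\|_{\xLtwo(\Omega)}\leq e^{-\lambda_1 t}\|y_0\|_{\xLtwo(\Omega)}\leq e^{-\lambda_1 t}|\Omega|^{1/2}\|y_0\|_{\xLinfty(\Omega)}$ with $\lambda_1=\lambda_1(\Omega)>0$ the first Dirichlet eigenvalue, and, the Dirichlet heat kernel being dominated by the Gaussian kernel, $\|Y_1(t)\|_{\xLinfty(\Omega)}\leq C(n)\|Y_1(t-1)\|_{\xLtwo(\Omega)}$ for $t\geq 1$; combining these, $\|Y_1(t)\|_{\xLinfty(\Omega)}\leq C(\Omega)e^{-\lambda_1 t}\|y_0\|_{\xLinfty(\Omega)}$ for all $t\geq 0$, and integrating over $(0,\infty)$ gives $\|Y_1\|_{\xLtwo(0,T;\xLinfty(\Omega))}\leq C(\Omega)\|y_0\|_{\xLinfty(\Omega)}$, independently of $T$. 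For $Y_2$ (zero datum, source $|h|$), multiplying by $\partial_t Y_2$, integrating over $(0,T)\times\Omega$ and using $Y_2(0,\cdot)=0$ yields $\|\partial_t Y_2\|_{\xLtwo((0,T)\times\Omega)}\leq\|h\|_{\xLtwo((0,T)\times\Omega)}$ with a $T$-free constant; then $-\Delta Y_2=|h|-\partial_t Y_2$ has $\xLtwo$ norm at most $2\|h\|_{\xLtwo((0,T)\times\Omega)}$, so the elliptic $\xHtwo$ estimate on $\Omega$ (which has $\xCtwo$ boundary), applied at a.e. time slice together with $Y_2(t)\in\xHone_0(\Omega)$, gives $\|Y_2\|_{\xLtwo(0,T;\xHtwo(\Omega))}\leq C(\Omega)\|h\|_{\xLtwo((0,T)\times\Omega)}$; since $n\leq 3$ the embedding $\xHtwo(\Omega)\hookrightarrow\xLinfty(\Omega)$ holds, so $\|Y_2\|_{\xLtwo(0,T;\xLinfty(\Omega))}\leq C(\Omega)\|h\|_{\xLtwo((0,T)\times\Omega)}$, again independently of $T$. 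Adding the two bounds gives \eqref{estimate_norm_L2Linf}.

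The main work, and the place where one must be careful, is justifying the comparison inequality and the energy identity at the stated low regularity ($c$ merely measurable, $y_0\in\xLinfty$, $h\in\xLtwo$): I would obtain them by approximation, truncating $c$ to $c_k=\min(c,k)$ and mollifying $(y_0,h)$, proving the estimates for the smooth regularized problems and passing to the limit through the uniform bounds. The delicate structural point is the $T$-independence of every constant: it comes from $\lambda_1(\Omega)>0$, which makes the initial-datum part decay exponentially over the whole half-line, and from the fact that the parabolic energy estimate for $Y_2$ carries an absolute constant because its initial datum vanishes; the restriction $n\leq 3$ is used precisely to keep $\xHtwo(\Omega)\hookrightarrow\xLinfty(\Omega)$ available (equivalently, to keep the heat-kernel smoothing exponent $n/4<1$).
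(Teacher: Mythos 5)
Your proposal is correct and follows essentially the same route as the paper's proof: comparison with $\psi_t-\Delta\psi=|h|$, $\psi(0)=|y_0|$ using $c\geq 0$, then splitting into the initial-datum part (bounded via the maximum principle for small times and smoothing plus exponential decay for $t\geq 1$) and the source part (bounded in $\xLtwo(0,T;\xHtwo(\Omega))$ by an energy estimate and then $\xHtwo(\Omega)\hookrightarrow\xLinfty(\Omega)$ for $n\leq 3$). The only cosmetic difference is that you test the source part with $\partial_t Y_2$ while the paper tests with $-\Delta\chi$; both yield the same $T$-independent bound on $\|\Delta\chi\|_{\xLtwo((0,T)\times\Omega)}$.
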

\begin{proof}[Proof of Lemma \ref{lemma_reg_nonnegativepotential}.]
	\textit{Step 1} \ \textbf{Comparison}\\
	Let $\psi$ be the solution to:
	\begin{equation}\label{lemma_reg_nonnegativepotential_proof_eq1}
	\begin{dcases}
	\psi_t-\Delta \psi =|h|\hspace{0.6 cm} & \mbox{in} \hspace{0.10 cm}(0,T)\times \Omega\\
	\psi=0  & \mbox{on}\hspace{0.10 cm} (0,T)\times \partial \Omega\\
	\psi(0,x)=|y_0|.  & \mbox{in} \hspace{0.10 cm}\Omega\\
	\end{dcases}
	\end{equation}
	Since $c\geq 0$, a.e. in $(0,T)\times \Omega$, by a comparison argument, for each $t\in [0,T]$:
	\begin{equation}\label{lemma_reg_nonnegativepotential_proof_eq2}
	|y(t,x)|\leq \psi(t,x), \quad \mbox{a.e. }x \in \Omega.
	\end{equation}
	Now, since $y_0$ and $h$ are bounded, again by comparison principle applied to \cref{lemma_reg_nonnegativepotential_proof_eq1}, $\psi$ is bounded. Hence, by \cref{lemma_reg_nonnegativepotential_proof_eq2}, $y$ is bounded as well and
	\begin{equation}\label{lemma_reg_nonnegativepotential_proof_eq3}
	\int_0^T\|y(t)\|_{\xLinfty\left(\Omega\right)}^2dt\leq \int_0^T\|\psi(t)\|_{\xLinfty\left(\Omega\right)}^2 dt.
	\end{equation}
	Then, to conclude it suffices to show
	\begin{equation*}
	\|\psi\|_{\xLtwo(0,T;\xLinfty\left(\Omega\right))}\leq K\left[\|y_0\|_{\xLinfty\left(\Omega\right)}+\|h\|_{\xLtwo((0,T)\times \Omega)}\right],
	\end{equation*}
	the constant $K$ being independent of $T$.\\
	\textit{Step 2} \ \textbf{Splitting}\\
	Split $\psi=\xi+\chi$, where $\xi$ solves:
	\begin{equation}\label{lemma_reg_nonnegativepotential_proof_eq4}
	\begin{dcases}
	\xi_t-\Delta \xi =0\hspace{0.6 cm} & \mbox{in} \hspace{0.10 cm}(0,T)\times \Omega\\
	\xi=0  & \mbox{on}\hspace{0.10 cm} (0,T)\times \partial \Omega\\
	\xi(0,x)=|y_0|  & \mbox{in} \hspace{0.10 cm}\Omega\\
	\end{dcases}
	\end{equation}
	while $\chi$ satisfies:
	\begin{equation}\label{lemma_reg_nonnegativepotential_proof_eq5}
	\begin{dcases}
	\chi_t-\Delta \chi=|h|\hspace{0.6 cm} & \mbox{in} \hspace{0.10 cm}(0,T)\times \Omega\\
	\chi=0  & \mbox{on}\hspace{0.10 cm} (0,T)\times \partial \Omega\\
	\chi(0,x)=0  & \mbox{in} \hspace{0.10 cm}\Omega.\\
	\end{dcases}
	\end{equation}
	
	First of all, we prove an estimate like \cref{estimate_norm_L2Linf} for $\xi$. We start by employing maximum principle (see \cite{MPD}) to \cref{lemma_reg_nonnegativepotential_proof_eq3}, getting
	\begin{equation}\label{lemma_reg_nonnegativepotential_proof_eq5.5}
	\|\xi\|_{\xLinfty\left((0,T)\times \Omega\right)}\leq \|y_0\|_{\xLinfty\left(\Omega\right)}.
	\end{equation}
	Now, if $T\geq 1$, by the regularizing effect and the exponential stability of the heat equation, for any $t\in [1,T]$, we have
	\begin{equation}\label{lemma_reg_nonnegativepotential_proof_eq6}
	\|\xi(t)\|_{\xLinfty\left(\Omega\right)}\leq K\|\xi(t-1)\|_{\xLtwo\left(\Omega\right)}\leq K \exp\left(-\lambda_1 (t-1)\right)\|y_0\|_{\xLtwo\left(\Omega\right)},
	\end{equation}
	the constant $K$ depending only on the domain $\Omega$.
	Then, by \cref{lemma_reg_nonnegativepotential_proof_eq5.5} and \cref{lemma_reg_nonnegativepotential_proof_eq6}, for any $T>0$, for every $t\in [0,T]$,
	\begin{equation}\label{lemma_reg_nonnegativepotential_proof_eq6.5}
	\|\xi(t)\|_{\xLinfty\left(\Omega\right)}\leq K\min\left\{1,\exp\left(-\lambda_1 (t-1)\right)\right\}\|y_0\|_{\xLinfty\left(\Omega\right)},
	\end{equation}
	with $K=K\left(\Omega\right)$.
	
	Now, we focus on \cref{lemma_reg_nonnegativepotential_proof_eq5}. By parabolic regularity (see e.g. \cite[Theorem 5 subsection 7.1.3]{PDE}), $\chi \in \xLtwo(0,T;\xHtwo\left(\Omega\right))$, with $\chi_t \in \xLtwo((0,T)\times \Omega)$. Then, by multiplying \cref{lemma_reg_nonnegativepotential_proof_eq5} by $-\Delta \chi$ and integrating over $[0,T]\times \Omega$, we obtain
	\begin{equation*}
	\frac12\|\nabla \chi(T)\|_{\xLtwo\left(\Omega\right)}^2+\int_0^T\int_{\Omega}|\Delta \chi|^2dxdt\leq \|h\|_{\xLtwo\left((0,T)\times \Omega\right)}\|\Delta \chi\|_{\xLtwo\left((0,T)\times \Omega\right)}.
	\end{equation*}
	By Young's Inequality,
	\begin{equation*}
	\int_0^T\int_{\Omega}|\Delta \chi|^2dxdt\leq \frac12\|h\|_{\xLtwo\left((0,T)\times \Omega\right)}^2+\frac12\|\Delta \chi\|_{\xLtwo\left((0,T)\times \Omega\right)}^2,
	\end{equation*}
	which leads to
	\begin{equation*}
	\int_0^T\int_{\Omega}|\Delta \chi|^2dxdt\leq \|h\|_{\xLtwo\left((0,T)\times \Omega\right)}^2.
	\end{equation*}
	Now, by \cite[Theorem 6 subsection 5.6.3]{PDE} and \cite[Theorem 4 subsection 6.3.2]{PDE},
	\begin{equation}\label{lemma_reg_nonnegativepotential_proof_eq7}
	\int_0^T\|\chi\|_{\xLinfty\left(\Omega\right)}^2dt\leq K\int_0^T\|\chi\|_{\xHtwo\left(\Omega\right)}^2dt\leq K\int_0^T\int_{\Omega}|\Delta \chi|^2dxdt\leq K\|h\|_{\xLtwo\left((0,T)\times \Omega\right)}^2.		
	\end{equation}
	
	Finally, by \cref{lemma_reg_nonnegativepotential_proof_eq3}, \cref{lemma_reg_nonnegativepotential_proof_eq6.5} and \cref{lemma_reg_nonnegativepotential_proof_eq7},
	\begin{equation*}
	\int_0^T\|y\|_{\xLinfty\left(\Omega\right)}^2dt\leq 2\int_0^T\|\xi\|_{\xLinfty\left(\Omega\right)}^2dt+2 K\int_0^T\|\chi\|_{\xLinfty\left(\Omega\right)}^2dt\leq  K\left[\|y_0\|_{\xLinfty\left(\Omega\right)}^2+\|h\|_{\xLtwo\left((0,T)\times \Omega\right)}^2\right],
	\end{equation*}
	as required.
\end{proof}

The following regularity result is employed in the proof of Lemma \ref{lemma_bound_optima}.

\begin{lmm}\label{lemma_subinterval_Linf}
	Let $\Omega\subset\xR^n$ be a bounded open set, with $\partial \Omega\in \xCinfty$. Let $c\in \xLinfty((0,T)\times \Omega)$ be nonnegative. Let $y_0\in \xLinfty\left(\Omega\right)$ an initial datum and $h\in \xLinfty((0,T)\times \Omega)$ a source term. Let $\overline{T}\in (0,T)$  and set $N\coloneqq \left\lfloor{T/\overline{T}}\right\rfloor$. Let $y$ be the solution to
	\begin{equation*}
	\begin{dcases}
	y_t-\Delta y+cy=h\hspace{2.8 cm} & \mbox{in} \hspace{0.10 cm}(0,T)\times\Omega\\
	y=0  & \mbox{on}\hspace{0.10 cm} (0,T)\times \partial \Omega\\
	y(0,x)=y_0(x)  & \mbox{in}\hspace{0.10 cm}  \Omega.\\
	\end{dcases}
	\end{equation*}
	Then, $y\in \xLinfty((0,T)\times\Omega)$ and we have
	\begin{equation}\label{estimate_subinterval_Linf}
	\|y\|_{\xLinfty((0,T)\times\Omega)}\leq K\left[\|y_0\|_{\xLinfty\left(\Omega\right)}+\max_{i=1,\dots,N}\|h\|_{\xLtwo(((i-1)\overline{T},i\overline{T});\xLinfty\left(\Omega\right))}+\|h\|_{\xLtwo(N\overline{T},T;\xLinfty\left(\Omega\right))}\right],
	\end{equation}
	where $K=K\left(\Omega, \overline{T}\right)$ is independent of the potential $c\geq 0$ and the time horizon $T$.
\end{lmm}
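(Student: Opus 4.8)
The plan is to reduce the estimate, via the comparison argument already used in Step~1 of the proof of Lemma~\ref{lemma_reg_nonnegativepotential}, to the pure heat equation, and then to combine the exponential decay of the Dirichlet heat semigroup with a Cauchy--Schwarz splitting over the grid of subintervals $I_i\coloneqq((i-1)\overline{T},i\overline{T})$.

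\textbf{Reduction to the heat equation.} Let $\psi$ solve $\psi_t-\Delta\psi=|h|$ in $(0,T)\times\Omega$, $\psi=0$ on $(0,T)\times\partial\Omega$, $\psi(0,\cdot)=|y_0|$, and let $\widetilde y$ solve $\widetilde y_t-\Delta\widetilde y+c\widetilde y=|h|$ with the same data. Since $c\ge0$, the maximum principle gives $\widetilde y\ge0$ and $\pm y\le\widetilde y$, while $\psi-\widetilde y$ solves a heat equation with zero data and nonnegative source $c\widetilde y$, hence $\widetilde y\le\psi$. Thus $|y|\le\psi$ pointwise, which yields $y\in\xLinfty((0,T)\times\Omega)$ once $\psi$ is bounded and reduces the problem to estimating $\|\psi\|_{\xLinfty((0,T)\times\Omega)}$.

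\textbf{Duhamel, decay and splitting over the grid.} Let $(S(t))_{t\ge0}$ be the Dirichlet heat semigroup on $\Omega$, so that $\psi(t)=S(t)|y_0|+\int_0^t S(t-s)|h(s)|\,ds$. By the maximum principle $\|S(t)|y_0|\|_{\xLinfty(\Omega)}\le\|y_0\|_{\xLinfty(\Omega)}$, and, combining the regularizing effect with the exponential stability of the heat semigroup exactly as in the proof of Lemma~\ref{lemma_reg_nonnegativepotential} (see \eqref{lemma_reg_nonnegativepotential_proof_eq6}), there is $C=C(\Omega)$ with $\|S(\tau)g\|_{\xLinfty(\Omega)}\le Ce^{-\lambda_1\tau}\|g\|_{\xLinfty(\Omega)}$ for all $\tau\ge0$, $\lambda_1$ denoting the first Dirichlet eigenvalue. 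Now fix $t\in I_{i_0}$ with $i_0\le N$, write $\int_0^t=\sum_{i<i_0}\int_{I_i}+\int_{(i_0-1)\overline{T}}^t$, set $b_i\coloneqq\|h\|_{\xLtwo(I_i;\xLinfty(\Omega))}$, use $e^{-\lambda_1(t-s)}\le1$ on the last piece and $e^{-\lambda_1(t-s)}\le e^{-\lambda_1(i_0-1-i)\overline{T}}$ on $I_i$ for $i<i_0$, and apply Cauchy--Schwarz on each piece ($\int_I\|h(s)\|_{\xLinfty(\Omega)}\,ds\le\sqrt{\overline{T}}\,\|h\|_{\xLtwo(I;\xLinfty(\Omega))}$). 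Summing the geometric series $\sum_{j\ge0}e^{-\lambda_1 j\overline{T}}=(1-e^{-\lambda_1\overline{T}})^{-1}$ gives
\[
\|\psi(t)\|_{\xLinfty(\Omega)}\le\|y_0\|_{\xLinfty(\Omega)}+C\sqrt{\overline{T}}\Big(1+\tfrac{1}{1-e^{-\lambda_1\overline{T}}}\Big)\max_{1\le i\le N}b_i .
\]
The leftover interval $(N\overline{T},T)$ is treated identically, its final piece now contributing $C\sqrt{\overline{T}}\,\|h\|_{\xLtwo((N\overline{T},T);\xLinfty(\Omega))}$; together with the reduction step this is exactly \eqref{estimate_subinterval_Linf} with $K=K(\Omega,\overline{T})$ independent of $T$.

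\textbf{Main point of difficulty.} There is no deep obstacle here; the substance is that summing the decay factors produces the constant $C(\Omega)\sqrt{\overline{T}}\,(1-e^{-\lambda_1\overline{T}})^{-1}$, which is finite for every fixed $\overline{T}>0$ and, crucially, independent of $T$ (it does degenerate as $\overline{T}\downarrow0$, which is permitted since $K$ may depend on $\overline{T}$). One also has to record that $\|S(\tau)\|_{\xLinfty(\Omega)\to\xLinfty(\Omega)}\le C(\Omega)e^{-\lambda_1\tau}$ holds for \emph{all} $\tau\ge0$: for $\tau\le1$ this is the maximum principle, and for $\tau\ge1$ one factors $S(\tau)=S(1)S(\tau-1)$ and uses the $\xLtwo\to\xLinfty$ smoothing of $S(1)$ together with the $\xLtwo$-exponential decay, precisely as in the proof of Lemma~\ref{lemma_reg_nonnegativepotential}. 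The remaining verifications (measurability of $s\mapsto\|S(t-s)|h(s)|\|_{\xLinfty(\Omega)}$, identification of the Duhamel formula with the weak solution, positivity of $S(t)$) are routine.
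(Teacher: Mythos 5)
Your proposal is correct and follows essentially the same route as the paper's proof: a comparison argument reducing to the pure heat equation with data $|y_0|$, $|h|$, then the Duhamel formula combined with the uniform-in-time $\xLinfty$-decay of the Dirichlet heat semigroup, a Cauchy--Schwarz estimate on each subinterval of length $\overline{T}$, and summation of the resulting geometric series to obtain a $T$-independent constant. The only cosmetic differences are your slightly more explicit comparison chain through the intermediate solution $\widetilde y$ and a cleaner indexing of the grid splitting; no gap.
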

\begin{proof}[Proof of Lemma \ref{lemma_subinterval_Linf}]
	\textit{Step 1} \ \textbf{Comparison argument}\\
	Let $\psi$ be the solution to:
	\begin{equation}\label{lemma_subinterval_Linf_proof_eq1}
	\begin{dcases}
	\psi_t-\Delta \psi =|h|\hspace{0.6 cm} & \mbox{in} \hspace{0.10 cm}(0,T)\times \Omega\\
	\psi=0  & \mbox{on}\hspace{0.10 cm} (0,T)\times \partial \Omega\\
	\psi(0,x)=|y_0|.  & \mbox{in} \hspace{0.10 cm}\Omega\\
	\end{dcases}
	\end{equation}
	Since $c\geq 0$, a.e. in $(0,T)\times \Omega$, by a comparison argument, for each $t\in [0,T]$:
	\begin{equation}\label{lemma_subinterval_Linf_proof_eq2}
	|y(t,x)|\leq \psi(t,x), \quad \mbox{a.e. }x\in  \Omega.
	\end{equation}
	Now, since $y_0$ and $h$ are bounded, again by comparison principle applied to \cref{lemma_subinterval_Linf_proof_eq1}, $\psi$ is bounded. Hence, by \cref{lemma_subinterval_Linf_proof_eq2}, $y$ is bounded as well and
	\begin{equation}\label{lemma_subinterval_Linf_proof_eq3}
	\|y\|_{\xLinfty((0,T)\times\Omega)}\leq \|\psi\|_{\xLinfty((0,T)\times\Omega)}.
	\end{equation}
	Then, to conclude it suffices to show
	\begin{equation*}
	\|\psi\|_{\xLinfty((0,T)\times\Omega)}\leq K\left[\|y_0\|_{\xLinfty\left(\Omega\right)}+\max_{i=1,\dots,N}\|h\|_{\xLtwo(((i-1)\overline{T},i\overline{T});\xLinfty\left(\Omega\right))}+\|h\|_{\xLtwo(N\overline{T},T;\xLinfty\left(\Omega\right))}\right],
	\end{equation*}
	the constant $K=K\left(\Omega, \overline{T}\right)$ being independent of $T$.\\
	\textit{Step 2} \  \textbf{Conclusion}\\
	Let $\left\{S(t)\right\}_{t\in\xR^+}$ be the heat semigroup on $\Omega$, with zero Dirichlet boundary conditions. Fix $\varepsilon \in (0,\overline{T})$. By the regularizing effect of the heat equation (see, e.g. \cite[Theorem 10.1, section 10.1]{BFP}), for any $t\geq \varepsilon$,
	\begin{equation*}
	\|S(t)y_0\|_{\xLinfty\left(\Omega\right)}\leq K\exp(-\mu(t-\varepsilon))\|y_0\|_{\xLtwo\left(\Omega\right)}\leq K\exp(-\mu(t-\varepsilon))\|y_0\|_{\xLinfty\left(\Omega\right)}.
	\end{equation*}
	For $t\in [0,\varepsilon]$, by comparison principle, we have
	\begin{equation*}
	\|S(t)y_0\|_{\xLinfty\left(\Omega\right)}\leq K\|y_0\|_{\xLinfty\left(\Omega\right)}\leq K\exp(-\mu(t-\varepsilon))\|y_0\|_{\xLinfty\left(\Omega\right)},
	\end{equation*}
	being $\exp(-\mu(t-\varepsilon))\geq 1$. Hence, for any $t\geq 0$,
	\begin{equation}\label{Linf_Linf_est}
	\|S(t)y_0\|_{\xLinfty\left(\Omega\right)}\leq K\exp(-\mu(t-\varepsilon))\|y_0\|_{\xLinfty\left(\Omega\right)}.
	\end{equation}
	Then, by the Duhamel formula, for any $t\in [0,T]$, we have
	\begin{equation}\label{repr_formula}
	\psi(t)=S(t)\left(|y_0|\right)+\int_0^tS(t-s)\left(|h(s)|\right)ds.
	\end{equation}
	Now, by \cref{Linf_Linf_est}, for any $t\geq 0$,
	\begin{equation}\label{ineq_init_dat}
	\|S(t)\left(|y_0|\right)\|_{\xLinfty\left(\Omega\right)}\leq K\exp\left(-\mu(t-\varepsilon)\right)\|y_0\|_{\xLinfty\left(\Omega\right)}.
	\end{equation}
	Besides, by applying \cref{Linf_Linf_est} to the integral term $\eta(t)\coloneqq \int_0^tS(t-s)\left(|h(s)|\right)ds$ in \cref{repr_formula}, we obtain
	\begin{eqnarray}\label{ineq_source_term}
	\|\eta(t)\|_{\xLinfty}&\leq&\int_0^t\|S(t-s)\left(|h(s)|\right)\|_{\xLinfty}ds\nonumber\\
	&\leq&K\int_0^t\exp(-\mu(t-s-\varepsilon))\|h(s)\|_{\xLinfty}ds\nonumber\\
	&\leq&K\left[\sum_{i=1}^{\left\lfloor{\frac{t}{\overline{T}}}\right\rfloor}\exp(-\mu(t-\varepsilon-i\overline{T}))\int_{(i-1)\overline{T}}^{i\overline{T}}\exp(-\mu(i\overline{T}-s))\|h(s)\|_{\xLinfty}ds\right.\nonumber\\
	&\;&\left.+\int_{\left(\left\lfloor{\frac{t}{\overline{T}}}\right\rfloor-1\right)\overline{T}}^{t}\exp(-\mu(t-s-\varepsilon))\|h(s)\|_{\xLinfty}ds\right]\nonumber\\
	&\leq&K\left\{\sum_{i=1}^{\left\lfloor{\frac{t}{\overline{T}}}\right\rfloor}\exp(-\mu(t-\varepsilon-i\overline{T}))\left[\int_{(i-1)\overline{T}}^{i\overline{T}}\exp(-2\mu(i\overline{T}-s))ds\right]^{\frac12}\left[\int_{(i-1)\overline{T}}^{i\overline{T}}\|h(s)\|_{\xLinfty}^2ds\right]^{\frac12}\right.\nonumber\\
	&\;&\left.+\left[\int_{\left(\left\lfloor{\frac{t}{\overline{T}}}\right\rfloor-1\right)\overline{T}}^{t}\exp(-2\mu(t-s-\varepsilon))ds\right]^{\frac12}\left[\int_{\left(\left\lfloor{\frac{t}{\overline{T}}}\right\rfloor-1\right)\overline{T}}^{t}\|h(s)\|_{\xLinfty}^2ds\right]^{\frac12}\right\}\nonumber\\
	&\leq&K\left[\sum_{i=1}^{\left\lfloor{\frac{t}{\overline{T}}}\right\rfloor}\exp(-\mu(t-\varepsilon-i\overline{T}))\|h\|_{\xLtwo((i-1)\overline{T},i\overline{T};\xLinfty\left(\Omega\right))}\right.\nonumber\\
	&\;&\left.+\|h\|_{\xLtwo(N\overline{T},T;\xLinfty\left(\Omega\right))}\right]\nonumber\\
	&\leq&K\left[\sum_{i=1}^{\left\lfloor{\frac{t}{\overline{T}}}\right\rfloor}\exp(-\mu(t-\varepsilon-i\overline{T}))\max_{i=1,\dots,N}\|h\|_{\xLtwo(((i-1)\overline{T},i\overline{T});\xLinfty\left(\Omega\right))}\right.\nonumber\\
	&\;&\left.+\|h\|_{\xLtwo(N\overline{T},T;\xLinfty\left(\Omega\right))}\right].
	\end{eqnarray}
	Now, the sum
	\begin{align}
	\sum_{i=1}^{\left\lfloor{\frac{t}{\overline{T}}}\right\rfloor}\exp(-\mu(t-\varepsilon-i\overline{T}))&\leq \exp(\mu \varepsilon)\sum_{i=1}^{\left\lfloor{\frac{t}{\overline{T}}}\right\rfloor}\exp\left(-\mu\left(\left\lfloor{\frac{t}{\overline{T}}}\right\rfloor\overline{T}-i\overline{T}\right)\right) \nonumber\\
	&= \exp(\mu \varepsilon)\sum_{i=1}^{\left\lfloor{\frac{t}{\overline{T}}}\right\rfloor}\exp\left(-\mu\left(\left\lfloor{\frac{t}{\overline{T}}}\right\rfloor-i\right)\overline{T}\right) \label{sum_eq_1}\\
	&= \exp(\mu \varepsilon)\sum_{j=0}^{\left\lfloor{\frac{t}{\overline{T}}}\right\rfloor-1}\exp\left(-\mu j \overline{T}\right) \label{sum_eq_2}\\
	&\leq \exp(\mu \varepsilon)\sum_{j=0}^{+\infty}\exp\left(-\mu j \overline{T}\right) \\
	&= \frac{\exp(\mu \varepsilon)}{1-\exp\left(-\mu \overline{T}\right)},  \label{sum_eq_3}\\
	\end{align}
	where in \eqref{sum_eq_1} we have used $t\leq \left\lfloor{\frac{t}{\overline{T}}}\right\rfloor$, in \eqref{sum_eq_2} we have performed the change of variable $j\coloneqq \left\lfloor{\frac{t}{\overline{T}}}\right\rfloor - i$ and in \eqref{sum_eq_3} we have computed the geometric series. Hence, by \eqref{ineq_source_term} and the above computation, we have\\
	\begin{eqnarray}\label{}
	\|\eta(t)\|_{\xLinfty}&\leq&K\left[\sum_{i=1}^{\left\lfloor{\frac{t}{\overline{T}}}\right\rfloor}\exp(-\mu(t-\varepsilon-i\overline{T}))\|h\|_{\xLtwo((i-1)\overline{T},i\overline{T};\xLinfty\left(\Omega\right))}\right.\nonumber\\
	&\;&\left.+\|h\|_{\xLtwo(N\overline{T},T;\xLinfty\left(\Omega\right))}\right]\nonumber\\
	&\leq&K\left[\frac{\exp(\mu \varepsilon)}{1-\exp\left(-\mu \overline{T}\right)}\max_{i=1,\dots,N}\|h\|_{\xLtwo(((i-1)\overline{T},i\overline{T});\xLinfty\left(\Omega\right))}\right.\nonumber\\
	&\;&\left.+\|h\|_{\xLtwo(N\overline{T},T;\xLinfty\left(\Omega\right))}\right]\nonumber\\
	&\leq&K\left[\max_{i=1,\dots,N}\|h\|_{\xLtwo((i-1)\overline{T},i\overline{T};\xLinfty\left(\Omega\right))}+\|h\|_{\xLtwo(N\overline{T},T;\xLinfty\left(\Omega\right))}\right],
	\end{eqnarray}
	with $K=K\left(\Omega, \overline{T}\right)$
	
	Then, by \cref{ineq_init_dat} and \cref{ineq_source_term}, for each $t\in [0,T]$
	\begin{equation*}
	\|\psi(t)\|_{\xLinfty\left(\Omega\right)}\leq K\exp\left(-\mu(t-\varepsilon)\right)\left[\|y_0\|_{\xLinfty\left(\Omega\right)}+\max_{i=1,\dots,N}\|h\|_{\xLtwo((i-1)\overline{T},i\overline{T};\xLinfty\left(\Omega\right))}+\|h\|_{\xLtwo(N\overline{T},T;\xLinfty\left(\Omega\right))}\right]
	\end{equation*}
	as desired.
\end{proof}


\section{Well posedeness and regularity of the state equation}
\label{appendixsec:sml_heat_well_posedeness}

In this subsection we study the well posedeness and regularity properties of the state equation \eqref{semilinear_internal_1_slt}.

Let $\Omega$ be a bounded open subset of $\mathbb{R}^n$, with boundary $\partial \Omega \in C^{2}$. The nonlinearity $f\in \xCone(\mathbb{R})$ is nondecreasing and $f(0)=0$. Let us also mention that, among the equivalent definitions for $\xHone_0$ norm, we choose $\left\|y\right\|_{\xHone_0\left(\Omega\right)}\coloneqq \sqrt{\int_{\Omega} \left\|\nabla y\right\|^2dx}$ as suggested by Poincar\'e’s inequality \cite[Corollary 9.19 page 290]{BFP}. To define the notion of solution, we introduce the class of test functions
\begin{equation*}
\mathscr{C}\coloneqq \xLtwo\left(\left(0,T\right);\xHone_0\left(\Omega\right)\right)\cap \xLinfty\left(\left(0,T\right)\times \Omega\right),
\end{equation*}
the Hilbert space (see e.g. \cite[(1.8)-(1.9) page 102]{LIO})
\begin{equation*}
W\left(0,T\right)\coloneqq \left\{y\in \xLtwo\left(\left(0,T\right);\xHone_0\left(\Omega\right)\right) \ | \ y_t\in \xLtwo\left(\left(0,T\right);\xHmone\left(\Omega\right)\right)\right\}
\end{equation*}
endowed with the norm
\begin{equation*}
\left\|y\right\|_{W\left(0,T\right)}\coloneqq \left\|y\right\|_{\xLtwo\left(\left(0,T\right);\xHone_0\left(\Omega\right)\right)}+\|y_t\|_{\xLtwo\left(\left(0,T\right);\xHmone\left(\Omega\right)\right)}.
\end{equation*}
and the Hilbert space
\begin{equation}\label{def_W_R}
W_R\left(0,T\right)\coloneqq \left\{y\in \xLtwo((0,T); \xHone_0\left(\Omega\right)\cap \xHtwo\left(\Omega\right)) \ | \ y_t\in \xLtwo\left(\left(0,T\right)\times\Omega\right) \right\}
\end{equation}
endowed with the norm:
\begin{equation*}
\|y\|_{W_R\left(0,T\right)}\coloneqq \|y\|_{\xLtwo((0,T); \xHtwo\left(\Omega\right))}+\|y_t\|_{\xLtwo\left(\left(0,T\right)\times\Omega\right)}.
\end{equation*}

\begin{dfntn}\label{def_sol_semilinear}
	Let $y_0\in \xLinfty\left(\Omega\right)$ and $h\in \xLtwo\left(\left(0,T\right)\times \Omega\right)$. Then, $y\in W\left(0,T\right)$ is said to be a solution to the Cauchy problem
	\begin{equation}\label{semilinear_internal_1_slt_wp}
	\begin{dcases}
	y_t-\Delta y+f\left(y\right)=h\hspace{2.8 cm} & \mbox{in} \hspace{0.10 cm}(0,T)\times\Omega\\
	y=0  & \mbox{on}\hspace{0.10 cm} (0,T)\times \partial \Omega\\
	y(0,x)=y_0(x)  & \mbox{in}\hspace{0.10 cm}  \Omega.
	\end{dcases}
	\end{equation}
	if $y\left(0,\cdot\right)=y_0$ in $\xLtwo\left(\Omega\right)$, $f\left(y\right)\in \xLone\left(\left(0,T\right)\times \Omega\right)$ and for any test function $\varphi\in \mathscr{C}$, we have
	\begin{equation}\label{weak_sol_id}
	\int_0^T\left\{\langle y_t,\varphi\rangle+\int_{\Omega}\left[\nabla y\nabla \varphi+f\left(y\right)\varphi\right]dx\right\}dt=\int_0^T\int_{\Omega}h\varphi dxdt,
	\end{equation}
	where $\langle\cdot, \cdot\rangle$ denotes the duality product $\left(\xHone_0\left(\Omega\right), \xHmone\left(\Omega\right)\right)$.
\end{dfntn}

To state the next Proposition, we introduce the following class of initial data
\begin{equation}\label{def_L_1F^2}
\xLoneFtwo \coloneqq \left\{y_0\in \xLtwo\left(\Omega\right) \ | \ F\left(y_0\right)\in \xLone\left(\Omega\right) \right\},
\end{equation}
where $F\left(y\right)\coloneqq \int_0^y f\left(\xi\right)d\xi$. Next Proposition is devoted to the well posedeness of \eqref{semilinear_internal_1_slt_wp}, while Proposition \ref{prop_sml_heat_improved_regularity} deals with regularity properties.

\begin{prpstn}[Well posedeness]\label{prop_sml_heat_well_posedeness}
	Let $y_0\in \xLoneFtwo$ be an initial datum and $h\in \xLtwo\left(\left(0,T\right)\times \Omega\right)$ be a source term. There exists a unique solution $y$ to \eqref{semilinear_internal_1_slt_wp}. Moreover, $f\left(y\right)$ is square integrable, i.e. $f\left(y\right)\in \xLtwo\left(\left(0,T\right)\times \Omega\right)$, with estimates
	\begin{equation}\label{prop_sml_heat_well_posedeness_est_2}
	\|y\|_{\xLtwo\left(\left(0,T\right);\xHone_0\left(\Omega\right)\right)}\leq \|y_0\|_{\xLtwo\left(\Omega\right)}+\|h\|_{\xLtwo\left(\left(0,T\right);\xHmone\left(\Omega\right)\right)},
	\end{equation}
	\begin{equation}\label{prop_sml_heat_well_posedeness_est_3}
	\left\|f\left(y\right)\right\|_{\xLtwo\left(\left(0,T\right)\times\Omega\right)}\leq \sqrt{2\left\|F\left(y_0\right)\right\|_{\xLone\left(\Omega\right)}}+\|h\|_{\xLtwo\left(\left(0,T\right)\times\Omega\right)}
	\end{equation}
	and
	\begin{equation}\label{prop_sml_heat_well_posedeness_est_4}
	\left\|y_t\right\|_{\xLtwo\left(\left(0,T\right);\xHmone\left(\Omega\right)\right)}\leq K\left[\left\|y_0\right\|_{\xLtwo\left(\Omega\right)}+\sqrt{\left\|F\left(y_0\right)\right\|_{\xLone\left(\Omega\right)}}+\|h\|_{\xLtwo\left(\left(0,T\right)\times\Omega\right)}\right],
	\end{equation}
	where $F\left(y\right) = \int_0^y f\left(\xi\right)d\xi$ and $K=K\left(\Omega\right)$.
\end{prpstn}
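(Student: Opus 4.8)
The plan is to construct the solution by truncating the nonlinearity, to establish the three estimates as \emph{a priori} bounds uniform in the truncation, and then to pass to the limit; uniqueness follows at once from the monotonicity of $f$. For $k\in\xN$ pick $f_k\in\xCone(\xR)$ nondecreasing, bounded and globally Lipschitz, with $f_k(0)=0$, $f_k f\ge 0$ and $|f_k|\le|f|$ pointwise, and $f_k\to f$ locally uniformly (for instance $f_k(s)=f(s)/(1+k^{-1}|f(s)|)$). Since $u\mapsto-\Delta u+f_k(u)$ is a bounded, monotone, coercive, hemicontinuous operator from $\xHone_0(\Omega)$ to $\xHmone(\Omega)$, Lions' theorem on monotone parabolic equations yields, for every $y_0\in\xLtwo(\Omega)$ and $h\in\xLtwo((0,T)\times\Omega)\subset\xLtwo((0,T);\xHmone(\Omega))$, a unique $y_k\in W(0,T)$ solving \eqref{semilinear_internal_1_slt_wp} with $f$ replaced by $f_k$; as $f_k(y_k)\in\xLtwo\subset\xLone$ and $\mathscr{C}\subset\xLtwo((0,T);\xHone_0(\Omega))$, this $y_k$ is a solution in the sense of \eqref{weak_sol_id} (with $f_k$).

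\textbf{A priori estimates.} Each bound is obtained by testing the $k$-th equation with a suitable multiplier, using that $f_k$ inherits monotonicity and $f_k(0)=0$. (i) Testing with $y_k$, so that $\int_\Omega f_k(y_k)y_k\,dx\ge 0$, integrating on $(0,T)$ and discarding $\tfrac12\|y_k(T)\|_{\xLtwo(\Omega)}^2\ge 0$ gives $\int_0^T\|\nabla y_k\|_{\xLtwo(\Omega)}^2\,dt\le\tfrac12\|y_0\|_{\xLtwo(\Omega)}^2+\int_0^T\langle h,y_k\rangle\,dt$; resolving the resulting quadratic inequality yields \eqref{prop_sml_heat_well_posedeness_est_2}, uniformly in $k$. (ii) \emph{The decisive test is against $f_k(y_k)$} (which lies in $\xLtwo((0,T);\xHone_0(\Omega))$ since $f_k$ is Lipschitz with $f_k(0)=0$). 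By the chain rule valid in $W(0,T)$, with $F_k(y)\coloneqq\int_0^y f_k$, one has $\langle(y_k)_t,f_k(y_k)\rangle=\tfrac{d}{dt}\int_\Omega F_k(y_k)\,dx$, while $\langle-\Delta y_k,f_k(y_k)\rangle=\int_\Omega f_k'(y_k)|\nabla y_k|^2\,dx\ge 0$; Young's inequality on the right then gives
\[
\tfrac{d}{dt}\int_\Omega F_k(y_k)\,dx+\tfrac12\int_\Omega|f_k(y_k)|^2\,dx\le\tfrac12\|h(t)\|_{\xLtwo(\Omega)}^2 .
\]
Integrating on $(0,T)$, dropping $\int_\Omega F_k(y_k(T))\,dx\ge 0$ (here $F_k\ge 0$), and using $0\le F_k\le F$ so that $\int_\Omega F_k(y_0)\,dx\le\|F(y_0)\|_{\xLone(\Omega)}$, we get $\|f_k(y_k)\|_{\xLtwo((0,T)\times\Omega)}^2\le 2\|F(y_0)\|_{\xLone(\Omega)}+\|h\|_{\xLtwo((0,T)\times\Omega)}^2$, which is \eqref{prop_sml_heat_well_posedeness_est_3} after $\sqrt{a+b}\le\sqrt a+\sqrt b$. (iii) Reading $(y_k)_t=\Delta y_k-f_k(y_k)+h$ in $\xHmone(\Omega)$ and combining $\|\Delta y_k\|_{\xLtwo((0,T);\xHmone)}\le\|y_k\|_{\xLtwo((0,T);\xHone_0)}$, the embedding $\xLtwo(\Omega)\hookrightarrow\xHmone(\Omega)$, and (i)--(ii) produces \eqref{prop_sml_heat_well_posedeness_est_4} with $K=K(\Omega)$.

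\textbf{Limit, uniqueness, and the main difficulty.} By the uniform bounds, up to a subsequence $y_k\rightharpoonup y$ in $\xLtwo((0,T);\xHone_0(\Omega))$ and $(y_k)_t\rightharpoonup y_t$ in $\xLtwo((0,T);\xHmone(\Omega))$, so $y\in W(0,T)$ and, by Aubin--Lions, $y_k\to y$ strongly in $\xLtwo((0,T)\times\Omega)$ and a.e., while $f_k(y_k)$ stays bounded in $\xLtwo((0,T)\times\Omega)$. Since $y_k\to y$ a.e. and $f_k\to f$ locally uniformly, $f_k(y_k)\to f(y)$ a.e.; the $\xLtwo$ bound forces equi-integrability on the finite-measure domain, hence $f_k(y_k)\to f(y)$ in $\xLone((0,T)\times\Omega)$ by Vitali's theorem, and $f(y)\in\xLtwo$ by weak lower semicontinuity. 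One then passes to the limit in \eqref{weak_sol_id} (the nonlinear term through $f_k(y_k)\to f(y)$ in $\xLone$ paired with $\varphi\in\xLinfty$) and in the initial condition (the trace $W(0,T)\to\xLtwo(\Omega)$ at $t=0$ is weakly continuous and $y_k(0)=y_0$), obtaining a solution of \eqref{semilinear_internal_1_slt_wp}; the three estimates survive by weak lower semicontinuity. For uniqueness, if $y_1,y_2$ are two solutions, subtract the equations and test with $y_1-y_2\in W(0,T)$ (licit since $f(y_1)-f(y_2)\in\xLtwo$): the monotone term $\int_\Omega(f(y_1)-f(y_2))(y_1-y_2)\,dx\ge 0$ is discarded and $(y_1-y_2)(0)=0$ forces $y_1\equiv y_2$. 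The only genuinely delicate point is the identification of the weak limit of $f_k(y_k)$ with $f(y)$ in the absence of any growth condition on $f$: it hinges on the a.e.\ convergence of $y_k$ and on the uniform $\xLtwo$ bound produced by step (ii), which is exactly what the hypothesis $y_0\in\xLoneFtwo$ buys.
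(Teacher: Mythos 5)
Your proof is correct, but it reaches existence by a different approximation than the paper. The paper keeps the nonlinearity $f$ fixed, first solves the problem for \emph{bounded} data $y_0\in \xLinfty\left(\Omega\right)$, $h\in\xLinfty\left(\left(0,T\right)\times\Omega\right)$ by citing an $\xLinfty$ existence theorem of Casas (which internally truncates $f$), derives the three estimates plus a uniform-integrability bound for $f(y)$ there, and then truncates the \emph{data} ($y_{0,m}=y_0\chi_{\{|y_0|\le m\}}$, $h_m=h\chi_{\{|h|\le m\}}$) and passes to the limit. You instead truncate the \emph{nonlinearity} to bounded Lipschitz monotone $f_k$ and solve each approximate problem for the original $\xLtwo$ data via Lions' theorem on monotone parabolic operators. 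The a priori estimates (testing with $y$, with $f(y)$ resp. $f_k(y_k)$, and reading off $y_t$ in $\xHmone$) and the limit machinery (Aubin--Lions compactness, a.e.\ convergence, Vitali, weak lower semicontinuity) are essentially identical in both arguments; your route is more self-contained in that it never needs an $\xLinfty$ theory for rough sources, while the paper's route keeps $f$ literal so that the identity of the nonlinear term along the approximating sequence is immediate. Your approach does require checking that the $f_k$ retain monotonicity, $0\le F_k\le F$, and enough regularity to make $f_k(y_k)$ an admissible test function — you do check all of this. One cosmetic slip: the suggested formula $f_k(s)=f(s)/\left(1+k^{-1}|f(s)|\right)$ is bounded, monotone and satisfies $|f_k|\le|f|$, but its derivative is $f'(s)\,k^2/\left(k+|f(s)|\right)^2$, which need not be globally bounded, so this particular $f_k$ may fail to be globally Lipschitz; the standard choice $f_k(s)\coloneqq f\left(\max\left(-k,\min\left(s,k\right)\right)\right)$ has all the properties you list and should be used instead. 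Finally, your uniqueness argument tests the difference of two solutions with $y_1-y_2$, which presumes $f(y_i)\in\xLtwo$ for both competitors; since Definition \ref{def_sol_semilinear} only guarantees $f(y)\in\xLone$, a fully rigorous version would test with a bounded truncation of $y_1-y_2$ first — but the paper itself dismisses uniqueness in one sentence, so you are no less rigorous than the source.
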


\begin{rmrk}
	In the literature, well posedeness and regularity of dissipative semilinear heat equations have been treated extensively (see e.g. \cite{casas1995optimal,raymond1999hamiltonian,barbu2010nonlinear}). However, in \cite{casas1995optimal,raymond1999hamiltonian} the source is required to be $\xLn{p}$, with $p$ large enough to have bounded solutions. Our goal is to show how to prove well posedeness and regularity with $\xLtwo$ source. As we shall see in Proposition \ref{prop_sml_heat_improved_regularity}, if the initial datum is bounded, the solution is bounded in space (of class $\xLtwo\left(\left(0,T\right);\xLinfty\left(\Omega\right)\right)$), but possibly unbounded in space-time. In \cite[Proposition 5.1 page 195]{barbu2010nonlinear} unbounded sources are considered, but to have solutions in $W\left(0,T\right)$ the initial datum is supposed to be in $\xHone_0\left(\Omega\right)\cap \xLoneFtwo$. In our result, we ask only $y_0\in \xLoneFtwo$. Note that, in case the source is only space dependent $h=h(x)$, \eqref{semilinear_internal_1_slt_wp} can be seen as a gradient flow of the convex functional
	\begin{equation}\label{functional_I}
	I:D(I)\subset \xLtwo\left(\Omega\right)\longrightarrow \mathbb{R},\hspace{0.3 cm}I\left(y\right)\coloneqq \int_{\Omega}\left[\frac12\left\|\nabla y\right\|^2+F\left(y\right)-hy\right]dx,
	\end{equation}
	where $F\left(y\right)=\int_0^yf\left(\xi\right)d\xi$ (see \cite[section 9.6]{PDE} and references therein). This explains the condition $F\left(y_0\right)\in \xLone\left(\Omega\right)$ we imposed, which requires that the term $\int_{\Omega}F\left(y_0\right)dx$ appearing in the functional \eqref{functional_I} is finite.
	
	The initial datum $y_0$ is supposed to be in $\xLoneFtwo$. However, this condition may be relaxed to weaker integrability conditions by looking for a solution in a larger Banach space.
\end{rmrk}

The proof of Proposition \ref{prop_sml_heat_well_posedeness} is inspired by \cite[Theorem 4.7, page 29]{boccardo2013elliptic}.
\begin{proof}[Proof of Proposition \ref{prop_sml_heat_well_posedeness}.]
	Uniqueness follows from energy estimates. We focus on the proof of existence and estimates. Along the proof, we will use that $f$ is nondecreasing and $f(0)=0$, which yield $f^{\prime}(y)\geq 0$ and $F\left(y\right)=\int_0^yf\left(\xi\right)d\xi \geq 0$, for any $y\in \mathbb{R}$. Integration by parts will be employed. We will denote by $K$ a large enough constant depending only on the domain $\Omega$.
	
	\textit{Step 1} \ \textbf{Existence for bounded data}\\
	Take initial datum $y_0\in \xLinfty(\Omega)$ and source $h\in \xLinfty((0,T)\times \Omega)$. By \cite[Theorem 2.1 page 547]{casas1995optimal}, there exists a bounded solution $y$ to \eqref{semilinear_internal_1_slt_wp}. Truncation methods are employed to deal with possibly non Lipschitz nonlinearity.
	
	\textit{Step 2} \ \textbf{Estimates for bounded data}\\
	We now aim at proving \eqref{prop_sml_heat_well_posedeness_est_2}, \eqref{prop_sml_heat_well_posedeness_est_3} and \eqref{prop_sml_heat_well_posedeness_est_4} for $\xLinfty$ data. In order to show \eqref{prop_sml_heat_well_posedeness_est_2}, in \eqref{weak_sol_id} let us choose as test function $\varphi\coloneqq y$, getting
	\begin{eqnarray}\label{}
	\int_0^T\int_{\Omega}hy dxdt&=&\int_0^T\int_{\Omega}\left[y_ty+\left\|\nabla y\right\|^2+f\left(y\right)y\right]dxdt\nonumber\\
	&=&\frac12\left[\left\|y(T,\cdot)\right\|_{\xLtwo(\Omega)}^2-\left\|y_0\right\|_{\xLtwo(\Omega)}^2\right]+\|y\|_{\xLtwo\left(\left(0,T\right);\xHone_0\left(\Omega\right)\right)}^2+\int_0^T\int_{\Omega}f\left(y\right)ydxdt.\nonumber\\
	\end{eqnarray}
	Then, by Young's inequality
	\begin{eqnarray}\label{}
	\|y\|_{\xLtwo\left(\left(0,T\right);\xHone_0\left(\Omega\right)\right)}^2+\int_0^T\int_{\Omega}f\left(y\right)ydxdt&\leq&\int_0^T\int_{\Omega}hy dxdt+\left\|y_0\right\|_{\xLtwo(\Omega)}^2\nonumber\\
	&\leq&\frac12\left\|h\right\|_{\xLtwo\left(\left(0,T\right);\xHmone\left(\Omega\right)\right)}^2+\frac12\left\|y\right\|_{\xLtwo\left(\left(0,T\right);\xHone_0\left(\Omega\right)\right)}^2+\frac12\left\|y_0\right\|_{\xLtwo(\Omega)}^2,\nonumber\\
	\end{eqnarray}
	which yields
	\begin{equation}\label{prop_sml_heat_well_posedeness_proof_eq6}
	\frac12\|y\|_{\xLtwo\left(\left(0,T\right);\xHone_0\left(\Omega\right)\right)}^2+\int_0^T\int_{\Omega}f\left(y\right)ydxdt\leq \frac12\left\|h\right\|_{\xLtwo\left(\left(0,T\right);\xHmone\left(\Omega\right)\right)}^2+\frac12\left\|y_0\right\|_{\xLtwo(\Omega)}^2,
	\end{equation}
	whence \eqref{prop_sml_heat_well_posedeness_est_2} follows. For the proof of \eqref{prop_sml_heat_well_posedeness_est_3}, let us observe that, since $y$ is bounded and $f$ is $\xCone$, $f^{\prime}\left(y\right)$ is bounded. Then, $\nabla \left(f\left(y\right)\right)=f^{\prime}\left(y\right)\nabla y\in \xLtwo\left(\left(0,T\right)\times\Omega\right)$, whence $f\left(y\right)$ is eligible as test function in \eqref{weak_sol_id}, thus obtaining
	\begin{eqnarray}\label{}
	\int_0^T\int_{\Omega}hf\left(y\right) dxdt&=&\int_0^T\int_{\Omega}\left[y_tf\left(y\right)+\nabla y\nabla \left(f\left(y\right)\right)+\left|f\left(y\right)\right|^2\right]dxdt\nonumber\\
	&=&\left\|F\left(y\left(T,\cdot\right)\right)\right\|_{\xLone\left(\Omega\right)}-\left\|F\left(y_0\right)\right\|_{\xLone\left(\Omega\right)}+\int_0^T\int_{\Omega}\left[f^{\prime}\left(y\right)\left\|\nabla y\right\|^2+\left|f\left(y\right)\right|^2\right]dxdt\nonumber\\
	&\geq&-\left\|F\left(y_0\right)\right\|_{\xLone\left(\Omega\right)}+\int_0^T\int_{\Omega}\left|f\left(y\right)\right|^2dxdt.\nonumber\\
	\end{eqnarray}
	Therefore, by Cauchy–Schwarz and Young's inequality
	\begin{eqnarray}\label{}
	\int_0^T\int_{\Omega}\left|f\left(y\right)\right|^2dxdt&\leq&\left\|F\left(y_0\right)\right\|_{\xLone\left(\Omega\right)}+\int_0^T\int_{\Omega}hf\left(y\right) dxdt\nonumber\\
	&\leq&\left\|F\left(y_0\right)\right\|_{\xLone\left(\Omega\right)}+\frac12\left\|h\right\|_{\xLtwo\left(\left(0,T\right)\times\Omega\right)}^2+\frac12\left\|f\left(y\right)\right\|_{\xLtwo\left(\left(0,T\right)\times\Omega\right)}^2,\nonumber\\
	\end{eqnarray}
	whence
	\begin{equation}\label{prop_sml_heat_well_posedeness_proof_eq7}
	\frac12\int_0^T\int_{\Omega}\left|f\left(y\right)\right|^2dxdt\leq \left\|F\left(y_0\right)\right\|_{\xLone\left(\Omega\right)}+\frac12\left\|h\right\|_{\xLtwo\left(\left(0,T\right)\times\Omega\right)}^2,
	\end{equation}
	which leads to \eqref{prop_sml_heat_well_posedeness_est_3}. For the proof of \eqref{prop_sml_heat_well_posedeness_est_4}, let us arbitrarily choose a test function $\varphi$ in the class $\mathscr{C}$ of test functions. Then, from \eqref{weak_sol_id} we have
	\begin{eqnarray}\label{prop_sml_heat_well_posedeness_proof_ytdual}
	\left|\int_0^T\langle y_t,\varphi\rangle dt\right|&=&\left|\int_0^T\int_{\Omega}\left[-\nabla y\nabla \varphi-f\left(y\right)\varphi+h\varphi\right]dxdt\right|\nonumber\\
	&\leq&\|y\|_{\xLtwo\left(\left(0,T\right);\xHone_0\left(\Omega\right)\right)}\|\varphi\|_{\xLtwo\left(\left(0,T\right);\xHone_0\left(\Omega\right)\right)}+\left\|f\left(y\right)\right\|_{\xLtwo\left(\left(0,T\right)\times\Omega\right)}\left\|\varphi\right\|_{\xLtwo\left(\left(0,T\right)\times\Omega\right)}\nonumber\\
	&\;&+\left\|h\right\|_{\xLtwo\left(\left(0,T\right)\times\Omega\right)}\left\|\varphi\right\|_{\xLtwo\left(\left(0,T\right)\times\Omega\right)}\nonumber\\
	&\leq&K\left[\left\|y_0\right\|_{\xLtwo\left(\Omega\right)}+\sqrt{\left\|F\left(y_0\right)\right\|_{\xLone\left(\Omega\right)}}+\left\|h\right\|_{\xLtwo\left(\left(0,T\right)\times\Omega\right)}\right]\left\|\varphi\right\|_{\xLtwo\left(\left(0,T\right);\xHone_0\left(\Omega\right)\right)},
	\end{eqnarray}
	as required. For the sake of uniform integrability needed in next steps, we will estimate the $\xLone$ norm of $f\left(y\right)$ over an arbitrary measurable set, by adapting the arguments of the proof of \cite[Theorem 4.7, page 29]{boccardo2013elliptic} based on the Vitali Convergence Theorem \cite[page $94$]{RMI}. Arbitrarily choose $M>0$ and $E$ Lebesgue measurable subset of $(0,T)\times \Omega$. Set
	\begin{equation}
	E_M\coloneqq \left\{(t,x)\in (0,T)\times \Omega \ | \ \left|y(t,x)\right|>M\right\}.
	\end{equation}
	
	Now, on the one hand,
	\begin{align}
	\int_{E_M} \left|f\left(y\right)\right|d\left(x,t\right)&\leq \frac{1}{M}\int_{E_M} \left|f\left(y\right)y\right|d\left(x,t\right)\label{prop_sml_heat_well_posedeness_proof_eq18}\\
	&=\frac{1}{M}\int_{0}^{T}\int_{\Omega} f\left(y\right)y \ dxdt\label{prop_sml_heat_well_posedeness_proof_eq19}\\
	&\leq\frac{1}{2M}\left[\left\|h\right\|_{\xLtwo\left(\left(0,T\right);\xHmone\left(\Omega\right)\right)}^2+\left\|y_0\right\|_{\xLtwo(\Omega)}^2\right]\label{prop_sml_heat_well_posedeness_proof_eq20}\\
	\end{align}
	where in \eqref{prop_sml_heat_well_posedeness_proof_eq18} the definition of $E_M$ is used, in \eqref{prop_sml_heat_well_posedeness_proof_eq19} the fact that $f$ is noncreasing together with $f(0)=0$ is employed and in \eqref{prop_sml_heat_well_posedeness_proof_eq20} inequality \eqref{prop_sml_heat_well_posedeness_proof_eq6} is used. Therefore
	\begin{equation}\label{prop_sml_heat_well_posedeness_proof_eq21}
	\int_{E_M} \left|f\left(y\right)\right|d\left(x,t\right)\leq \frac{1}{2M}\left[\left\|h\right\|_{\xLtwo\left(\left(0,T\right);\xHmone\left(\Omega\right)\right)}^2+\left\|y_0\right\|_{\xLtwo(\Omega)}^2\right].
	\end{equation}
	
	On the other hand, since $f$ is increasing and $f(0)=0$, we have
	\begin{align}\label{prop_sml_heat_well_posedeness_proof_eq24}
	\int_{E \setminus E_M} \left|f\left(y\right)\right|d\left(x,t\right)&\leq \int_{E \setminus E_M} \max\left\{\left|f\left(-M\right)\right|,\left|f\left(M\right)\right|\right\}d\left(x,t\right)\nonumber\\
	&\leq \int_{E} \max\left\{\left|f\left(-M\right)\right|,\left|f\left(M\right)\right|\right\}d\left(x,t\right)\nonumber\\
	&=\mu_{leb}\left(E\right)\max\left\{\left|f\left(-M\right)\right|,\left|f\left(M\right)\right|\right\}.\nonumber\\
	\end{align}
	
	Putting together \eqref{prop_sml_heat_well_posedeness_proof_eq21} and \eqref{prop_sml_heat_well_posedeness_proof_eq24}, we obtain
	\begin{eqnarray}\label{prop_sml_heat_well_posedeness_proof_eq32}
	\int_{E} \left|f\left(y\right)\right|d\left(x,t\right)&=&\int_{E_M} \left|f\left(y\right)\right|d\left(x,t\right) + \int_{E \setminus E_M} \left|f\left(y\right)\right|d\left(x,t\right)\nonumber\\
	&\leq &\frac{1}{2M}\left[\left\|h\right\|_{\xLtwo\left(\left(0,T\right);\xHmone\left(\Omega\right)\right)}^2+\left\|y_0\right\|_{\xLtwo(\Omega)}^2\right]\nonumber\\
	&\;&+\mu_{leb}\left(E\right)\max\left\{\left|f\left(-M\right)\right|,\left|f\left(M\right)\right|\right\}.\nonumber\\
	\end{eqnarray}
	This will allow us to apply Vitali Convergence Theorem \cite[page $94$]{RMI} to perform the density argument in step 3.
	
	\textit{Step 3} \ \textbf{Existence of solution for unbounded data}\\
	Let $y_0\in \xLoneFtwo$ be an initial datum and let $h$ be a source in $\xLtwo\left(\left(0,T\right)\times\Omega\right)$. Take sequences $\left\{y_{0,m}\coloneqq y_0\chi_{\left\{\left|y_0\right|\leq m\right\}}\right\}_{m\in \mathbb{N}}\subset \xLinfty\left(\Omega\right)$ and $\left\{h_m \coloneqq h\chi_{\left\{\left|h\right|\leq m\right\}} \right\}_{m\in \mathbb{N}}\subset \xLinfty\left(\left(0,T\right)\times\Omega\right)$, where $\chi_E$ denotes the characteristic function of a set $E$. By Dominated Convergence Theorem
	, we have
	\begin{equation}
	\left\|y_{0,m}-y_0\right\|_{\xLtwo\left(\Omega\right)}+\left\|F\left(y_{0,m}\right)-F\left(y_0\right)\right\|_{\xLone\left(\Omega\right)}+\left\|h_m-h\right\|_{\xLtwo\left(\left(0,T\right)\times\Omega\right)}\underset{m\to +\infty}{\longrightarrow}0.
	\end{equation}
	For each $m\in \mathbb{N}$, set $y_m$ solution to \eqref{semilinear_internal_1_slt_wp} with initial datum $y_{0,m}$ and source $h_m$. By \eqref{prop_sml_heat_well_posedeness_proof_eq6}, \eqref{prop_sml_heat_well_posedeness_proof_eq7} and \eqref{prop_sml_heat_well_posedeness_proof_ytdual}, the sequences $\left\{y_m\right\}_{m\in \mathbb{N}}\subset W\left(0,T\right)$ and $\left\{f\left(y_m\right)\right\}_{m\in \mathbb{N}}\subset \xLtwo\left(\left(0,T\right)\times \Omega\right)$ are bounded. Hence, by Banach-Alaoglu Theorem, there exists $y\in W\left(0,T\right)$ with $f\left(y\right)\in \xLtwo\left(\left(0,T\right)\times \Omega\right)$ such that, up to subsequences,
	\begin{equation}\label{prop_sml_heat_well_posedeness_proof_weakconv}
	y_m\underset{m\to +\infty}{\rightharpoonup}y,
	\end{equation}
	weakly in $W\left(0,T\right)$ and
	\begin{equation}
	f\left(y_m\right)\underset{m\to +\infty}{\rightharpoonup}f\left(y\right)
	\end{equation}
	weakly in $\xLtwo\left(\left(0,T\right)\times \Omega\right)$. By parabolic compactness (\cite{Simon1986} or \cite[Th\'eor\`eme 2.4.1 page 51]{ISV}), the sequence $\left\{y_m\right\}_{m\in \mathbb{N}}$ is relatively compact in $\xLtwo\left(\left(0,T\right)\times\Omega\right)$, whence, up to subsequences
	\begin{equation}\label{prop_sml_heat_well_posedeness_proof_eq216}
	y_m\underset{m\to +\infty}{\longrightarrow}y,\hspace{0.36 cm}\mbox{a.e. in}\hspace{0.06 cm}(0,T)\times \Omega,
	\end{equation}
	which, together with the continuity of $f$, yields
	\begin{equation}\label{prop_sml_heat_well_posedeness_proof_eq218}
	f\left(y_m\right)\underset{m\to +\infty}{\longrightarrow}f\left(y\right),\hspace{0.36 cm}\mbox{a.e. in}\hspace{0.06 cm}(0,T)\times \Omega.
	\end{equation}
	Moreover, \eqref{prop_sml_heat_well_posedeness_proof_eq32} gives uniform integrability of $\left\{y_m\right\}_{m\in \mathbb{N}}$. Then, we are allowed to apply Vitali Convergence Theorem \cite[page $94$]{RMI}, getting $f\left(y\right)\in \xLone(\Omega)$ and
	\begin{equation}\label{prop_sml_heat_well_posedeness_proof_eq174}
	f\left(y_m\right)\underset{m\to +\infty}{\longrightarrow}f\left(y\right),
	\end{equation}
	in $\xLone\left(\left(0,T\right)\times \Omega\right)$. Now, the weak convergence \eqref{prop_sml_heat_well_posedeness_proof_weakconv} and the strong convergence \eqref{prop_sml_heat_well_posedeness_proof_eq174} enable us to pass to the limit as $m\to +\infty$ in \eqref{weak_sol_id}, thus showing that in fact $y$ is a solution to \eqref{semilinear_internal_1_slt_wp}. To finish the proof, we apply \eqref{prop_sml_heat_well_posedeness_proof_eq6}, \eqref{prop_sml_heat_well_posedeness_proof_eq7} and \eqref{prop_sml_heat_well_posedeness_proof_ytdual} to $\left\{y_m\right\}_{m\in \mathbb{N}}$. Then, using the Lower Semicontinuity of the norm with respect to the weak convergence \cite[Proposition 3.13 (iii) page 63]{BFP}, we take the limit as $m\to +\infty$, getting respectively \eqref{prop_sml_heat_well_posedeness_est_2}, \eqref{prop_sml_heat_well_posedeness_est_3} and \eqref{prop_sml_heat_well_posedeness_est_4}. This finishes the proof.
\end{proof}

\begin{prpstn}[Improved regularity]\label{prop_sml_heat_improved_regularity}
	Let $y_0\in \xLoneFtwo\cap \xHone_0\left(\Omega\right)$ be an initial datum and $h\in \xLtwo\left(\left(0,T\right)\times \Omega\right)$ be a source term. Let $y$ be the corresponding solution to \eqref{semilinear_internal_1_slt_wp}. Then, in fact, $y\in \xLtwo\left(\left(0,T\right);\xHone_0\left(\Omega\right)\cap \xHtwo\left(\Omega\right)\right)$ and $y_t\in \xLtwo\left(\left(0,T\right)\times\Omega\right)$, with
	\begin{equation}\label{prop_sml_heat_well_posedeness_regest_1}
	\|y\|_{\xLtwo((0,T); \xHtwo\left(\Omega\right))}\leq K\left[\|y_0\|_{\xHone_0\left(\Omega\right)}+\|h\|_{\xLtwo\left(\left(0,T\right)\times\Omega\right)}\right]
	\end{equation}
	and
	\begin{equation}\label{prop_sml_heat_well_posedeness_regest_2}
	\|y_t\|_{\xLtwo\left(\left(0,T\right)\times\Omega\right)}\leq \|y_0\|_{\xHone_0\left(\Omega\right)}+\sqrt{2\left\|F\left(y_0\right)\right\|_{\xLone\left(\Omega\right)}}+\|h\|_{\xLtwo\left(\left(0,T\right)\times\Omega\right)},
	\end{equation}
	where $K=K\left(\Omega\right)$.
	
	Suppose, the initial datum $y_0\in \xLinfty\left(\Omega\right)$ (not required to be of class $\xHone_0\left(\Omega\right)$) and the source $h\in \xLtwo\left(\left(0,T\right)\times \Omega\right)$. Assume the space dimension $n=1,2,3$. Then, $y$ is bounded in space, namely $y\in \xLtwo\left(\left(0,T\right);\xHone_0\left(\Omega\right)\cap \xLinfty\left(\Omega\right)\right)$, with estimates
	\begin{equation}\label{prop_sml_heat_well_posedeness_est_1}
	\|y\|_{\xLtwo\left((0,T); \xLinfty\left(\Omega\right)\right)}\leq K\left[\|y_0\|_{\xLinfty\left(\Omega\right)}+\|h\|_{\xLtwo\left(\left(0,T\right)\times\Omega\right)}\right],
	\end{equation}
	with $K=K\left(\Omega\right)$
\end{prpstn}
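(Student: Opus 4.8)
The plan is to treat separately the two regimes in the statement: the case $y_0\in\xLoneFtwo\cap\xHone_0(\Omega)$, which yields \eqref{prop_sml_heat_well_posedeness_regest_1} and \eqref{prop_sml_heat_well_posedeness_regest_2}, and the case of a merely bounded initial datum, which yields \eqref{prop_sml_heat_well_posedeness_est_1}. In both the energy computations are first carried out on bounded data, where the solution $y$ furnished in the proof of Proposition \ref{prop_sml_heat_well_posedeness} is bounded, so that linear parabolic regularity applied to $y_t-\Delta y=h-f(y)$ (with $\xLtwo$, in fact $\xLinfty$, right-hand side and $\xHone_0$ initial datum) places $y$ in $W_R(0,T)$; on such a solution all the integrations by parts below are classical, and the general case is then recovered by density.

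For the first regime and bounded data, I would test \eqref{semilinear_internal_1_slt_wp} with $y_t$: using the duality identity $\int_0^t\langle y_s,-\Delta y\rangle\,ds=\tfrac12\|\nabla y(t)\|_{\xLtwo(\Omega)}^2-\tfrac12\|\nabla y_0\|_{\xLtwo(\Omega)}^2$ valid in $W_R(0,T)$, together with $\int_\Omega f(y)\,y_t\,dx=\tfrac{d}{dt}\int_\Omega F(y)\,dx$, then Young's inequality on $\int\!\!\int h\,y_t$, and finally discarding the nonnegative quantities $\tfrac12\|\nabla y(T)\|_{\xLtwo(\Omega)}^2$ and $\int_\Omega F(y(T))\,dx$ (here $F\ge 0$, from $f$ nondecreasing with $f(0)=0$), one gets $\|y_t\|_{\xLtwo((0,T)\times\Omega)}^2\le\|\nabla y_0\|_{\xLtwo(\Omega)}^2+2\|F(y_0)\|_{\xLone(\Omega)}+\|h\|_{\xLtwo((0,T)\times\Omega)}^2$, which is \eqref{prop_sml_heat_well_posedeness_regest_2}. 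Testing instead with $-\Delta y$ and using $\int_\Omega f(y)(-\Delta y)\,dx=\int_\Omega f'(y)|\nabla y|^2\,dx\ge 0$ (again monotonicity of $f$), together with Young's inequality, gives $\|\Delta y\|_{\xLtwo((0,T)\times\Omega)}^2\le\|\nabla y_0\|_{\xLtwo(\Omega)}^2+\|h\|_{\xLtwo((0,T)\times\Omega)}^2$; combined with the elliptic estimate $\|y(t)\|_{\xHtwo(\Omega)}\le K(\Omega)\|\Delta y(t)\|_{\xLtwo(\Omega)}$ integrated over $(0,T)$, this yields \eqref{prop_sml_heat_well_posedeness_regest_1}.

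To pass to general $y_0\in\xLoneFtwo\cap\xHone_0(\Omega)$ and $h\in\xLtwo((0,T)\times\Omega)$, I would run the density argument as in the proof of Proposition \ref{prop_sml_heat_well_posedeness}, but approximating $y_0$ by its truncations $y_{0,m}$ at level $m$, which lie in $\xLinfty(\Omega)\cap\xHone_0(\Omega)$, converge to $y_0$ in $\xHone_0(\Omega)$, and satisfy $F(y_{0,m})\to F(y_0)$ in $\xLone(\Omega)$ by dominated convergence (using $0\le F(y_{0,m})\le F(y_0)$), while $h_m:=h\chi_{\{|h|\le m\}}\to h$ in $\xLtwo$. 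The solutions $y_m$ obey \eqref{prop_sml_heat_well_posedeness_regest_1}--\eqref{prop_sml_heat_well_posedeness_regest_2} with $m$-independent constants, hence are bounded in $W_R(0,T)$; extracting a weakly convergent subsequence, identifying the limit with the unique solution $y$ of \eqref{semilinear_internal_1_slt_wp} with data $(y_0,h)$ exactly as in Proposition \ref{prop_sml_heat_well_posedeness}, and invoking weak lower semicontinuity of the relevant norms, the two estimates carry over to $y$.

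For the second regime, note that $y_0\in\xLinfty(\Omega)$ and $\Omega$ bounded force $F(y_0)\in\xLinfty(\Omega)\subset\xLone(\Omega)$, so $y_0\in\xLoneFtwo$ and Proposition \ref{prop_sml_heat_well_posedeness} already gives $y\in\xLtwo((0,T);\xHone_0(\Omega))$ and $f(y)\in\xLtwo((0,T)\times\Omega)$. Writing $f(y)=c_y\,y$ with $c_y:=f(y)/y$ where $y\ne 0$ and $c_y:=f'(0)$ where $y=0$, monotonicity of $f$ and $f(0)=0$ give $c_y\ge 0$, while $c_y y=f(y)\in\xLtwo((0,T)\times\Omega)$, so $y$ solves $y_t-\Delta y+c_y y=h$ in the exact framework of Lemma \ref{lemma_reg_nonnegativepotential} (this is where $n\in\{1,2,3\}$ enters), and \eqref{estimate_norm_L2Linf} is precisely \eqref{prop_sml_heat_well_posedeness_est_1}. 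The main obstacle I anticipate is the rigorous justification of the energy identities — the time-integrated duality formula for $\int\langle y_t,-\Delta y\rangle$ and the admissibility of $y_t$ and $-\Delta y$ as test functions — which is why the computations are done on the bounded approximations, where $y_m\in W_R(0,T)$ and everything is standard, and only afterwards passed to the limit; a secondary subtlety is choosing an approximating sequence that converges in $\xHone_0(\Omega)$ while keeping $F(y_{0,m})$ convergent in $\xLone(\Omega)$.
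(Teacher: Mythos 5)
Your proposal is correct and follows essentially the same route as the paper: linear parabolic regularity applied to $y_t-\Delta y=h-f(y)$ to place bounded-data solutions in $W_R(0,T)$, the two energy identities obtained by testing with $y_t$ and $-\Delta y$ (using $f'\ge 0$ and $F\ge 0$), a density/weak-lower-semicontinuity passage via truncated data, and Lemma \ref{lemma_reg_nonnegativepotential} with the nonnegative potential $c_y=f(y)/y$ for the $\xLtwo((0,T);\xLinfty(\Omega))$ bound. Your choice of the clipping truncation $\min(m,\max(-m,y_0))$, which genuinely stays in $\xHone_0(\Omega)$ and converges there, is in fact slightly more careful than the sharp cutoff $y_0\chi_{\{|y_0|\le m\}}$ used in the paper, but the argument is otherwise the same.
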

\begin{proof}[Proof of Proposition \ref{prop_sml_heat_improved_regularity}]
	As in the proof of Proposition \ref{prop_sml_heat_well_posedeness}, we will use that $f$ is nondecreasing and $f(0)=0$, which yield $f^{\prime}(y)\geq 0$ and $F\left(y\right)=\int_0^yf\left(\xi\right)d\xi \geq 0$, for any $y\in \mathbb{R}$. Integration by parts will be employed. We will denote by $K$ a large enough constant depending only on the domain $\Omega$.
	
	\textit{Step 1} \ \textbf{Estimates for bounded data}\\
	We now aim at proving \eqref{prop_sml_heat_well_posedeness_regest_1} and \eqref{prop_sml_heat_well_posedeness_regest_2} for $\xLinfty$ data working with $y_0\in \xLinfty\left(\Omega\right)\cap \xHone_0\left(\Omega\right)$ and $h\in \xLinfty\left(\left(0,T\right)\times \Omega\right)$. By Proposition \ref{prop_sml_heat_well_posedeness}, $f\left(y\right)\in \xLtwo\left(\left(0,T\right)\times \Omega\right)$. Then, by applying \cite[Theorem 5 page 382]{PDE} to
	\begin{equation*}
	\begin{dcases}
	y_t-\Delta y=-f\left(y\right)+h\hspace{2.8 cm} & \mbox{in} \hspace{0.10 cm}(0,T)\times\Omega\\
	y=0  & \mbox{on}\hspace{0.10 cm} (0,T)\times \partial \Omega\\
	y(0,x)=y_0(x)  & \mbox{in}\hspace{0.10 cm}  \Omega,
	\end{dcases}
	\end{equation*}
	we get $y\in \xLtwo\left(\left(0,T\right);\xHone_0\left(\Omega\right)\cap \xHtwo\left(\Omega\right)\right)$ and $y_t\in \xLtwo\left(\left(0,T\right)\times\Omega\right)$. Let us choose as test function $\varphi\coloneqq -\Delta y$ in \eqref{weak_sol_id}\footnote{
		$\varphi= -\Delta y$ may not be in $\mathscr{C}$. However, in case $y\in \xLtwo\left(\left(0,T\right);\xHone_0\left(\Omega\right)\cap \xHtwo\left(\Omega\right)\right)\cap \xLinfty\left(\left(0,T\right)\times \Omega\right)$, $y_t\in \xLtwo\left(\left(0,T\right)\times\Omega\right)$ and $f\left(y\right)\in \xLtwo\left(\left(0,T\right)\times \Omega\right)$, \eqref{weak_sol_id} is valid for each test function in $\mathscr{C}$ if and only if it holds for any test function in $\xLtwo\left(\left(0,T\right)\times \Omega\right)$.
	}, obtaining
	\begin{eqnarray}\label{}
	\int_0^T\int_{\Omega}h\left(-\Delta y\right) dxdt
	&=&\int_0^T\int_{\Omega}\left[y_t\left(-\Delta y\right)+\left|-\Delta y\right|^2+f\left(y\right)\left(-\Delta y\right)\right]dxdt\nonumber\\
	&=&\int_0^T\int_{\Omega}\left[\nabla y_t\nabla y+\left|-\Delta y\right|^2+f^{\prime}\left(y\right)\left\|\nabla y\right\|^2\right]dxdt\nonumber\\
	&\geq&\frac12\left\|\nabla y(T,\cdot)\right\|_{\xLtwo(\Omega)}^2-\frac12\left\|\nabla y_0\right\|_{\xLtwo(\Omega)}^2+\int_0^T\int_{\Omega}\left|-\Delta y\right|^2dxdt\nonumber\\
	\end{eqnarray}
	Therefore, by Cauchy-Schwarz and Young's inequality
	\begin{eqnarray}\label{}
	\left\|\Delta y\right\|_{\xLtwo\left(\left(0,T\right)\times \Omega\right)}^2&\leq &\int_0^T\int_{\Omega}h\left(-\Delta y\right) dxdt + \frac12\left\|\nabla y_0\right\|_{\xLtwo(\Omega)}^2\nonumber\\
	&\leq&\frac12 \left\|h\right\|_{\xLtwo\left(\left(0,T\right)\times \Omega\right)}^2+\frac12\left\|\Delta y\right\|_{\xLtwo\left(\left(0,T\right)\times \Omega\right)}^2 + \frac12\left\|\nabla y_0\right\|_{\xLtwo(\Omega)}^2,\nonumber\\
	\end{eqnarray}
	whence, by elliptic regularity \cite[Theorem 4 page 334]{PDE} we get \eqref{prop_sml_heat_well_posedeness_regest_1}. To prove \eqref{prop_sml_heat_well_posedeness_regest_2} for bounded data. Let us choose as test function $\varphi\coloneqq y_t$ in \eqref{weak_sol_id}\footnote{
		$\varphi= y_t$ may not be in $\mathscr{C}$. As in the former footnote.}, getting
	\begin{eqnarray}\label{}
	\int_0^T\int_{\Omega}hy_t dxdt&=&\int_0^T\int_{\Omega}\left[\left|y_t\right|^2+\nabla y\nabla\left(y_t\right)+f\left(y\right)y_t\right]dxdt\nonumber\\
	&=&\int_0^T\int_{\Omega}\left|y_t\right|^2dxdt+\frac12\left\|\nabla y(T,\cdot)\right\|_{\xLtwo(\Omega)}^2-\frac12\left\|\nabla y_0\right\|_{\xLtwo(\Omega)}^2+\left\|F\left(y\left(T,\cdot\right)\right)\right\|_{\xLone(\Omega)}-\left\|F\left(y_0\right)\right\|_{\xLone(\Omega)}.\nonumber\\
	\end{eqnarray}
	Then, by Cauchy-Schwarz and Young's inequality
	\begin{eqnarray}\label{}
	\left\|y_t\right\|_{\xLtwo\left(\left(0,T\right)\times \Omega\right)}^2&\leq &\int_0^T\int_{\Omega}hy_t dxdt + \frac12\left\|\nabla y_0\right\|_{\xLtwo(\Omega)}^2+\left\|F\left(y_0\right)\right\|_{\xLone(\Omega)}\nonumber\\
	&\leq&\frac12 \left\|h\right\|_{\xLtwo\left(\left(0,T\right)\times \Omega\right)}^2+\frac12\left\|y_t\right\|_{\xLtwo\left(\left(0,T\right)\times \Omega\right)}^2 + \frac12\left\|\nabla y_0\right\|_{\xLtwo(\Omega)}^2+\left\|F\left(y_0\right)\right\|_{\xLone(\Omega)},\nonumber\\
	\end{eqnarray}
	which leads to \eqref{prop_sml_heat_well_posedeness_regest_2}.
	
	\textit{Step 2} \ \textbf{Estimates for unbounded data}\\
	Suppose the initial datum $y_0\in \xLinfty\left(\Omega\right)\cap \xHone_0\left(\Omega\right)$. Consider the sequence $\left\{y_m\right\}_{m\in \mathbb{N}}$ constructed in step 4 of the proof of Proposition \ref{prop_sml_heat_well_posedeness}. In step 1 we have obtained \eqref{prop_sml_heat_well_posedeness_regest_1} and \eqref{prop_sml_heat_well_posedeness_regest_2} for bounded data. We apply them to $\left\{y_m\right\}_{m\in \mathbb{N}}$, getting respectively
	\begin{equation}\label{prop_sml_heat_well_posedeness_regest_1_y_m}
	\|y_m\|_{\xLtwo((0,T); \xHtwo\left(\Omega\right))}\leq K\left[\|y_0\|_{\xHone_0\left(\Omega\right)}+\|h_m\|_{\xLtwo\left(\left(0,T\right)\times\Omega\right)}\right]
	\end{equation}
	and
	\begin{equation}\label{prop_sml_heat_well_posedeness_regest_2_y_m}
	\|\left(y_m\right)_t\|_{\xLtwo\left(\left(0,T\right)\times\Omega\right)}\leq \|y_0\|_{\xHone_0\left(\Omega\right)}+\sqrt{2\left\|F\left(y_0\right)\right\|_{\xLone\left(\Omega\right)}}+\|h_m\|_{\xLtwo\left(\left(0,T\right)\times\Omega\right)}.
	\end{equation}
	Then, by Banach-Alaoglu Theorem, $\left\{y_m\right\}_{m\in \mathbb{N}}$ is weakly precompact in $W_R\left(0,T\right)$ (defined in \eqref{def_W_R}), whence $y\in W_R\left(0,T\right)$ and the above inequalities hold for $y$.
	
	\textit{Step 3} \ \textbf{Boundedness in space}\\
	Let us now assume $y_0\in \xLinfty\left(\Omega\right)$ and $h\in \xLtwo\left(\left(0,T\right)\times\Omega\right)$. The boundedness in space of $y$ and \eqref{prop_sml_heat_well_posedeness_est_1} follows from Lemma \ref{lemma_reg_nonnegativepotential}, with potential
	\begin{equation*}c(t,x)\coloneqq 
	\begin{dcases}
	\frac{f(y(t,x))}{y(t,x)} \hspace{1.3 cm} & y(t,x)\neq 0\\
	f^{\prime}(0) & y(t,x)= 0.\\
	\end{dcases}
	\end{equation*}
	This concludes the proof of the Proposition.
\end{proof}

\section{Uniform bounds of the optima}
\label{appendixsec:Uniform bounds of the optima}

As pointed out in \cite[subsection 3.2]{PZ2}, the norms of optimal controls and states can be estimated in terms of the initial datum for \cref{semilinear_internal_1_slt} and the running target in an averaged sense, using the inequality
\begin{equation}
J_{T}\left(u^T\right)\leq J_{T}\left(0\right),
\end{equation}
where $u^T$ is any optimal control for the time-evolution problem. We have to ensure that the bounds actually holds for any time, i.e. we need to show that optimal controls and states do not oscillate too much.

The proof of Lemma \ref{lemma_bound_optima} follows the scheme:
\begin{itemize}
	\item divide the interval $[0,T]$ into subintervals of $T$-independent length;
	\item estimate the magnitude of the optima in each subinterval by using controllability (\cref{oscillations_estimate}).
\end{itemize}

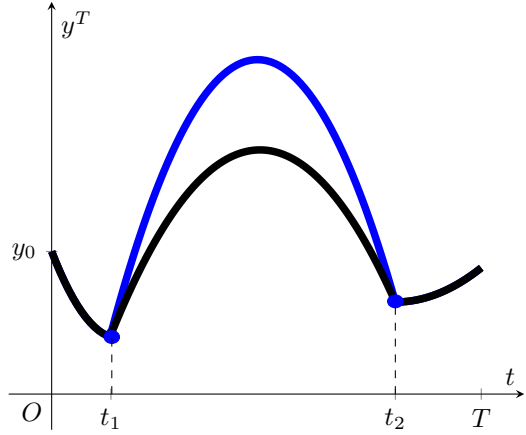
\begin{figure}
	\begin{center}
		\begin{tikzpicture}[
		declare function={
			func(\x)= (\x < 1.36) * (pow(\x,2)-3.12*\x+4)   +
			and(\x >= 1.36, \x < 8) * (-0.66*pow(\x,2)+6.3237*\x -5.7695)     +
			(\x >= 8) * (0.24*pow(\x-8,2)+2.58)
			;
		}
		]
		\begin{axis}[
		axis x line=middle, axis y line=middle,
		ymin=-1, ymax=11, ytick={4}, yticklabels={$y_0$}, ylabel=$y^T$,
		xmin=-1, xmax=11, xtick={1.38,8,10}, xticklabels={$t_1$,$t_2$,$T$}, xlabel=$t$,
		domain=0:10,samples=101, 
		]
		
		{\addplot [blue,line width=0.1cm] {func(x)};}
		{\addplot [black,line width=0.1cm] {(\x < 1.36) * (pow(\x,2)-3.12*\x+4)   +
				and(\x >= 1.36, \x < 8) * (-0.43*pow(\x,2)+4.1709*\x -3.2671)     +
				(\x >= 8) * (0.24*pow(\x-8,2)+2.58)};}
		\draw [fill, blue] (24,26) circle [radius=1.8];
		\draw [dashed] (24,10) -- (24,26);
		\draw [fill, blue] (90,36) circle [radius=1.8];
		\draw [dashed] (90,10) -- (90,36);
		\node [below left, black] at (10,10) {$O$};
		\end{axis}
		\end{tikzpicture}
		\caption{The idea of the proof of Lemma \ref{lemma_bound_optima} is to use controllability for \cref{semilinear_internal_1_slt} to show that optima for \cref{semilinear_internal_1_slt}-\cref{functional_slt} cannot oscillate too much. Indeed, consider a the time interval $[t_1,t_2]$. By controllability, we can link $y^T\left(t_1,\cdot\right)$ and $y^T\left(t_2,\cdot\right)$ by a controlled trajectory (in blue). By optimality, the optimum (in black) is bounded by the constructed trajectory.}
		\label{oscillations_estimate}
	\end{center}
\end{figure}

In order to carry out the proof of Lemma \ref{lemma_bound_optima}, we need some preliminary lemmas. We start by stating some results on the controllability of a dissipative semilinear heat equation.

\subsection{Controllability of dissipative semilinear heat equation}
\label{appendixsubsec:Controllability dissipative semilinear heat equation}

\begin{lemma}\label{lemma_controllab_semil}
	Let $\hat{y}\in \xLinfty((0,+\infty)\times \Omega)$ be a target trajectory, solution to
	\begin{equation}\label{}
	\begin{dcases}
	\hat{y}_t-\Delta \hat{y}+f\left(\hat{y}\right)=\hat{u}\chi_{\omega}\hspace{2.8 cm} & \mbox{in} \hspace{0.10 cm}(0,T)\times\Omega\\
	\hat{y}=0  & \mbox{on}\hspace{0.10 cm} (0,T)\times \partial \Omega,
	\end{dcases}
	\end{equation}
	with control $\hat{u}\in \xLinfty((0,T)\times \omega)$. Let $y_0\in \xLinfty\left(\Omega\right)$ be an initial datum. Let $R>0$. Suppose $\|y_0\|_{\xLinfty\left(\Omega\right)}\leq R$ and $\|\hat{y}\|_{\xLinfty((0,+\infty)\times \Omega)}\leq R$. Then, there exists $T_R=T_R(\Omega,f,\omega,R)$, such that for any $T\geq T_R$ there exists $u\in \xLinfty((0,T)\times \omega)$ such that the solution $y$ to the controlled equation \cref{semilinear_internal_1_slt}, with initial datum $y_0$ and control $u$, verifies the final condition
	\begin{equation}\label{final_condition}
	y(T,x)=\hat{y}(T,x)\hspace{0.3 cm}\mbox{in} \ \Omega
	\end{equation}
	and
	\begin{equation}\label{est_Linf_contr}
	\left\|u-\hat{u}\right\|_{\xLinfty((0,T)\times \omega)}\leq K\|y_0-\hat{y}(0)\|_{\xLinfty\left(\Omega\right)},
	\end{equation}
	where the constant $K$ depends only on $\Omega$, $f$, $\omega$ and $R$.
\end{lemma}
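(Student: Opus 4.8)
\emph{Strategy.} The plan is a two--phase ``stabilize, then control'' argument, preceded by a truncation that reduces the matter to a globally Lipschitz nonlinearity. Since $\|y_0\|_{\xLinfty\left(\Omega\right)}\le R$ and $\|\hat y\|_{\xLinfty((0,+\infty)\times\Omega)}\le R$, every state produced below will be shown to stay in a ball $[-M,M]$ with $M=M(\Omega,R)$ fixed once and for all; one picks a nondecreasing, globally Lipschitz $\tilde f$ with $\tilde f(0)=0$ that agrees with $f$ on $[-M,M]$, so that $\hat y$ still solves the $\tilde f$--equation with control $\hat u$. Setting $w\coloneqq y-\hat y$, $v\coloneqq u-\hat u$ and $w_0\coloneqq y_0-\hat y(0)$ (so $\|w_0\|_{\xLinfty\left(\Omega\right)}\le 2R$), driving $y$ to $\hat y(T)$ at time $T$ is equivalent to driving $w$ from $w_0$ to $0$, where $w$ solves $w_t-\Delta w+g(t,x,w)=v\chi_\omega$ with $g(t,x,s)\coloneqq\tilde f(\hat y(t,x)+s)-\tilde f(\hat y(t,x))$. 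The structural facts to be exploited are $g(t,x,0)=0$, $g$ globally Lipschitz in $s$ uniformly in $(t,x)$ (constant $L=L(R)$), and, by monotonicity of $\tilde f$, $g(t,x,s)s\ge 0$.

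\emph{Phase 1 (free decay on $[0,T-\tau]$).} Here one takes $v\equiv 0$, i.e. $u=\hat u$. Writing $g(t,x,w)=c(t,x)\,w$ with $c\ge 0$ the divided difference of $\tilde f$, the equation for $w$ has a nonnegative potential, so by the comparison principle $|w(t,\cdot)|\le\psi(t,\cdot)$ where $\psi$ solves the free heat equation with datum $|w_0|$; exactly as in the proof of Lemma \ref{lemma_reg_nonnegativepotential}, the maximum principle gives $\|\psi(t)\|_{\xLinfty\left(\Omega\right)}\le\|w_0\|_{\xLinfty\left(\Omega\right)}$ for all $t$ while the regularizing effect and exponential stability of the heat semigroup give $\|\psi(t)\|_{\xLinfty\left(\Omega\right)}\le Ke^{-\lambda_1(t-1)}\|w_0\|_{\xLinfty\left(\Omega\right)}$ for $t\ge 1$, with $K=K(\Omega)$. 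Consequently $\|y(t)\|_{\xLinfty\left(\Omega\right)}\le R+2KR=:M$ on $[0,T-\tau]$, which fixes the truncation threshold, and $\|w(T-\tau)\|_{\xLinfty\left(\Omega\right)}\le Ke^{-\lambda_1(T-\tau-1)}\|w_0\|_{\xLinfty\left(\Omega\right)}$.

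\emph{Phase 2 (local controllability around $\hat y$ on $[T-\tau,T]$) and conclusion.} One invokes the quantitative local null--controllability, on a fixed time interval of length $\tau=\tau(\Omega,\omega)$, of the semilinear heat equation with the globally Lipschitz nonlinearity $g$ vanishing at $0$: there exist $\varepsilon_0=\varepsilon_0(\Omega,\omega,R)>0$ and $K'=K'(\Omega,\omega,R)$ such that if $\|w(T-\tau)\|_{\xLinfty\left(\Omega\right)}\le\varepsilon_0$ there is a control $v$ on $(T-\tau,T)\times\omega$ bringing $w$ to $0$ at time $T$ with $\|v\|_{\xLinfty((T-\tau,T)\times\omega)}\le K'\|w(T-\tau)\|_{\xLinfty\left(\Omega\right)}$ and with $\|w\|_{\xLinfty((T-\tau,T)\times\Omega)}\le M-R$. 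This rests on the linear Carleman/observability estimate (whose control depends linearly and boundedly on the datum, uniformly over potentials bounded by $L$) combined with a Schauder fixed--point argument for the Lipschitz term $g$, plus a short smoothing step to pass from an $\xLtwo$ to an $\xLinfty$ control and the parabolic $\xLinfty$--estimate of the state in terms of datum and control; by time--translation invariance it applies verbatim on $[T-\tau,T]$ because $\|\hat y\|_\infty\le R$ everywhere. Choosing $T_R\coloneqq\tau+1+\lambda_1^{-1}\log\!\big(1+2KR/\varepsilon_0\big)$, for $T\ge T_R$ Phase 1 yields $\|w(T-\tau)\|_{\xLinfty\left(\Omega\right)}\le\varepsilon_0$, and concatenating gives $\|u-\hat u\|_{\xLinfty((0,T)\times\omega)}=\|v\|_{\xLinfty((T-\tau,T)\times\omega)}\le K'Ke^{-\lambda_1(T-\tau-1)}\|w_0\|_{\xLinfty\left(\Omega\right)}\le K'K\,\|y_0-\hat y(0)\|_{\xLinfty\left(\Omega\right)}$, using $T\ge T_R\ge\tau+1$. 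Since $\|y\|_{\xLinfty((0,T)\times\Omega)}\le M$ throughout, $\tilde f(y)=f(y)$ and $(y,u)$ in fact solves the original equation, giving the claim with constant $K'K=K(\Omega,f,\omega,R)$.

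\emph{Main obstacle.} The delicate point is Phase 2: the quantitative local controllability around $\hat y$ with an $\xLinfty$ bound on the control that is \emph{linear} in the $\xLinfty$ size of the perturbation, together with the guarantee that the controlled trajectory does not leave $[-M,M]$. Linearity is automatic for the linearized system and is preserved by the fixed point because $g$ is globally Lipschitz with $g(\cdot,\cdot,0)=0$; the passage from $\xLtwo$ to $\xLinfty$ controls and the sup--norm control of the trajectory require the standard smoothing and parabolic--regularity bookkeeping, and this is also where one must be careful that all constants depend only on $\Omega,\omega$ and $R$ (through $L$), and not on $T$. A one--shot null--controllability on $[0,T]$ is possible as well (the heat observability constant being bounded for $T\ge 1$), but then the $\xLinfty$ upgrade and the a priori bound on the state over the whole horizon are less transparent, which is why the two--phase splitting is preferable.
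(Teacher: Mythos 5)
Your proposal is correct: the paper does not actually prove this lemma (it only remarks that the proof is classical and points to the references \cite{EFR,DTA}), and your two--phase argument --- free decay toward $\hat y$ exploiting the nonnegative potential obtained by linearizing the nondecreasing nonlinearity, followed by local exact controllability to trajectories with an $\xLinfty$ cost estimate linear in the perturbation on a final window of fixed length --- is precisely the standard scheme used in those references. The only part you invoke as a black box (quantitative local controllability around a bounded trajectory, uniform in $T$ and in the reference trajectory through $R$) is itself the classical Carleman--plus--fixed--point result, so your write--up is in fact more explicit than the paper's.
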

The proof of the above lemma is classical (see, e.g. \cite{EFR,DTA}).

In order to prove Lemma \ref{lemma_bound_optima}, we introduce an optimal control problem, with specified terminal states. Let $t_1<t_2$. Let $\hat{y}$ be a target trajectory, bounded solution to \cref{semilinear_internal_1_slt} in $(t_1,t_2)$, i.e.
\begin{equation}\label{semilinear_internal_3}
\begin{dcases}
\hat{y}_t-\Delta \hat{y}+f(\hat{y})=\hat{u}\chi_{\omega}\hspace{2.8 cm} & \mbox{in} \hspace{0.10 cm}(t_1,t_2)\times\Omega\\
\hat{y}=0  & \mbox{on}\hspace{0.10 cm} (t_1,t_2)\times \partial \Omega\\
\hat{y}(t_1,x)=\hat{y}_0(x)  & \mbox{in}\hspace{0.10 cm}  \Omega,
\end{dcases}
\end{equation}
with initial datum $\hat{y}_0\in \xLinfty\left(\Omega\right)$ and control $\hat{u}\in \xLinfty\left(\left(t_1,t_2\right)\times \omega\right)$.

For any control $u\in \xLtwo((t_1,t_2)\times \omega)$, the corresponding state $y$ is the solution to:
\begin{equation}\label{semilinear_internal_2}
\begin{dcases}
y_t-\Delta y+f\left(y\right)=u\chi_{\omega}\hspace{2.8 cm} & \mbox{in} \hspace{0.10 cm}(t_1,t_2)\times\Omega\\
y=0  & \mbox{on}\hspace{0.10 cm} (t_1,t_2)\times \partial \Omega\\
y(t_1,x)=\hat{y}(t_1,x)  & \mbox{in}\hspace{0.10 cm}  \Omega.
\end{dcases}
\end{equation}
We introduce the set of admissible controls\\
\begin{equation*}
\mathscr{U}_{\mbox{\tiny{ad}}}\coloneqq \left\{u\in \xLtwo((t_1,t_2)\times \omega) \ | \ y(t_2,\cdot)=\hat{y}(t_2,\cdot)\right\}.
\end{equation*}
By definition, $\hat{u}\in \mathscr{U}_{\mbox{\tiny{ad}}}$. Hence, $\mathscr{U}_{\mbox{\tiny{ad}}}\neq \varnothing$. We consider the optimal control problem
\begin{equation}\label{functional_specified_terminal_states}
\min_{u\in \mathscr{U}_{\mbox{\tiny{ad}}}}J_{t_1,t_2}(u)=\frac12 \int_{t_1}^{t_2}\int_{\omega} |u|^2 dxdt+\frac{\beta}{2}\int_{t_1}^{t_2}\int_{\omega_0} |y-z|^2 dxdt,
\end{equation}
with running target $z\in \xLinfty(\omega_0)$. By the direct methods in the calculus of variations, the functional $J_{t_1,t_2}$ admits a global minimizer in the set of admissible controls $\mathscr{U}_{\mbox{\tiny{ad}}}$.

We now bound the minimal value of the functional \cref{functional_specified_terminal_states}, showing that the magnitude of the control $\hat{u}$ in the time interval $[t_1,t_2-T_R]$ can be neglected when estimating the cost of controllability. Namely, what matters is the norm of $\hat{u}$ in the final time interval $[t_2-T_R,t_2]$.

\begin{lemma}\label{lemma_optima_bounds_specified_terminal_states}
	Consider the optimal control problem \cref{semilinear_internal_2}-\cref{functional_specified_terminal_states}, with $t_2-t_1\geq T_R$. Then,
	\begin{eqnarray}\label{lemma_optima_bounds_specified_terminal_states_eq_1}
	\min_{\mathscr{U}_{\mbox{\tiny{ad}}}}J_{t_1,t_2}&\leq&K\left[\|\hat{y}(t_1,\cdot)\|_{\xLinfty\left(\Omega\right)}^2+(t_2-t_1)\|z\|_{\xLinfty(\omega_0)}^2\right.\nonumber\\
	&\;&\left.+\|\hat{u}\|_{\xLinfty((t_2-T_R,t_2)\times \omega)}^2+\|\hat{y}(t_2-T_R,\cdot)\|_{\xLinfty\left(\Omega\right)}^2\right],\nonumber\\
	\end{eqnarray}
	the constant $K$ being independent of the time horizon $t_2-t_1\geq T_R$.
\end{lemma}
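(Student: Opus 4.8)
The plan is to bound $\min_{\mathscr{U}_{\mbox{\tiny{ad}}}}J_{t_1,t_2}$ from above by evaluating $J_{t_1,t_2}$ at a single, carefully built admissible control $u^{\ast}$. I would split $[t_1,t_2]$ into a long \emph{relaxation arc} $[t_1,t_2-T_R]$ and a short \emph{control arc} $[t_2-T_R,t_2]$, which is legitimate since $t_2-t_1\geq T_R$. On the relaxation arc I set $u^{\ast}\equiv 0$, letting the dissipative dynamics pull the state towards the null steady state; on the control arc I invoke the controllability Lemma \ref{lemma_controllab_semil}, with target trajectory $\hat{y}$ restricted to $[t_2-T_R,t_2]$ and initial datum $y(t_2-T_R,\cdot)$, to steer the state exactly onto $\hat{y}(t_2,\cdot)$ at time $t_2$; this makes $u^{\ast}$ admissible. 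I fix $R$ so large that $\|\hat{y}\|_{\xLinfty((t_1,t_2)\times\Omega)}\leq R$ and take $T_R$ to be the corresponding time furnished by Lemma \ref{lemma_controllab_semil}, so that $T_R$ and all constants $K$ below depend only on $\Omega,\omega,f,\beta,R$ and never on $t_2-t_1$.

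On the relaxation arc the control cost is zero. Since $f$ is nondecreasing with $f(0)=0$, the free solution $y$ issued from $\hat{y}(t_1,\cdot)$ satisfies $|y|\leq w$, where $w$ solves the heat equation with datum $|\hat{y}(t_1,\cdot)|$; combining the $\xLinfty$-contractivity and the exponential stabilization of the Dirichlet heat semigroup — exactly as in the proof of Lemma \ref{lemma_reg_nonnegativepotential}, cf.\ \cref{lemma_reg_nonnegativepotential_proof_eq6.5} — gives $\|y(t)\|_{\xLinfty(\Omega)}\leq Ke^{-\mu(t-t_1)}\|\hat{y}(t_1,\cdot)\|_{\xLinfty(\Omega)}$ for some $\mu>0$. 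Hence the tracking cost on this arc is at most $K\|\hat{y}(t_1,\cdot)\|_{\xLinfty(\Omega)}^2+K(t_2-t_1)\|z\|_{\xLinfty(\omega_0)}^2$, the improper integral $\int_{t_1}^{\infty}e^{-2\mu(t-t_1)}\,dt$ being finite and $T$-independent; moreover $\|y(t_2-T_R,\cdot)\|_{\xLinfty(\Omega)}\leq K\|\hat{y}(t_1,\cdot)\|_{\xLinfty(\Omega)}$, which feeds the next step.

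On the control arc, both $\|y(t_2-T_R,\cdot)\|_{\xLinfty(\Omega)}$ and $\|\hat{y}\|_{\xLinfty((t_2-T_R,t_2)\times\Omega)}$ are $\leq R$ (enlarging $R$ harmlessly), so Lemma \ref{lemma_controllab_semil} produces $u$ with $y(t_2,\cdot)=\hat{y}(t_2,\cdot)$ and $\|u-\hat{u}\|_{\xLinfty((t_2-T_R,t_2)\times\omega)}\leq K\|y(t_2-T_R,\cdot)-\hat{y}(t_2-T_R,\cdot)\|_{\xLinfty(\Omega)}\leq K[\|\hat{y}(t_1,\cdot)\|_{\xLinfty(\Omega)}+\|\hat{y}(t_2-T_R,\cdot)\|_{\xLinfty(\Omega)}]$; taking $u^{\ast}\coloneqq u$ here makes $u^{\ast}\in\mathscr{U}_{\mbox{\tiny{ad}}}$. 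The control cost on the arc is $\leq\tfrac12 T_R|\omega|\,\|u^{\ast}\|_{\xLinfty((t_2-T_R,t_2)\times\omega)}^2$, which by the triangle inequality is $\leq K[\|\hat{u}\|_{\xLinfty((t_2-T_R,t_2)\times\omega)}^2+\|\hat{y}(t_1,\cdot)\|_{\xLinfty(\Omega)}^2+\|\hat{y}(t_2-T_R,\cdot)\|_{\xLinfty(\Omega)}^2]$. For the tracking cost I bound $\|y\|_{\xLinfty((t_2-T_R,t_2)\times\Omega)}\leq\|\hat{y}\|_{\xLinfty((t_2-T_R,t_2)\times\Omega)}+\|y-\hat{y}\|_{\xLinfty((t_2-T_R,t_2)\times\Omega)}$; on the short interval of length $T_R$, Duhamel together with the $\xLinfty$-contractivity of the heat semigroup and dissipativity gives $\|\hat{y}\|_{\xLinfty((t_2-T_R,t_2)\times\Omega)}\leq K[\|\hat{y}(t_2-T_R,\cdot)\|_{\xLinfty(\Omega)}+\|\hat{u}\|_{\xLinfty((t_2-T_R,t_2)\times\omega)}]$, while $y-\hat{y}$ solves a parabolic equation with a nonnegative zeroth-order coefficient (a difference quotient of $f$) and source $(u-\hat{u})\chi_{\omega}$, so the same comparison argument (as in the proof of Lemma \ref{lemma_subinterval_Linf}) gives $\|y-\hat{y}\|_{\xLinfty((t_2-T_R,t_2)\times\Omega)}\leq K[\|y(t_2-T_R,\cdot)-\hat{y}(t_2-T_R,\cdot)\|_{\xLinfty(\Omega)}+\|u-\hat{u}\|_{\xLinfty((t_2-T_R,t_2)\times\omega)}]$. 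Collecting these and using $T_R\leq t_2-t_1$, the tracking cost on the arc is $\leq K[\|\hat{y}(t_1,\cdot)\|_{\xLinfty(\Omega)}^2+\|\hat{y}(t_2-T_R,\cdot)\|_{\xLinfty(\Omega)}^2+\|\hat{u}\|_{\xLinfty((t_2-T_R,t_2)\times\omega)}^2]+K(t_2-t_1)\|z\|_{\xLinfty(\omega_0)}^2$. Adding the four contributions yields \cref{lemma_optima_bounds_specified_terminal_states_eq_1}.

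The main obstacle is preventing the estimate from growing linearly in $t_2-t_1$ except through the $\|z\|_{\xLinfty(\omega_0)}^2$ term: a naive admissible control, or even $u^{\ast}\equiv 0$ combined with the merely uniform bound $\|y(t)\|_{\xLinfty(\Omega)}\leq\|\hat{y}(t_1,\cdot)\|_{\xLinfty(\Omega)}$ on the long arc, would leave an unwanted $(t_2-t_1)\|\hat{y}(t_1,\cdot)\|_{\xLinfty(\Omega)}^2$; it is exactly the dissipativity of $f$ (nondecreasing, $f(0)=0$) forcing exponential decay of the free evolution that removes it. A secondary but essential bookkeeping point is to keep $T_R$ and every constant $K$ tied only to the a priori bound $R$ on $\hat{y}$, never to $t_2-t_1$, which is what makes the final constant uniform in the time horizon.
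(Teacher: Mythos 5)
Your proposal is correct and follows essentially the same route as the paper: split $[t_1,t_2]$ into a long arc with null control (where the dissipativity of $f$ forces exponential decay of the free solution, so its tracking cost is bounded uniformly in $t_2-t_1$ apart from the $(t_2-t_1)\|z\|_{\xLinfty(\omega_0)}^2$ term) and a final arc of length $T_R$ where Lemma \ref{lemma_controllab_semil} steers the state onto $\hat{y}(t_2,\cdot)$ with the control estimated by $\|y^0(t_2-T_R)-\hat{y}(t_2-T_R)\|_{\xLinfty(\Omega)}$. The only difference is that you spell out the comparison-principle bookkeeping on the control arc in more detail than the paper, which simply invokes dissipativity there; the substance is identical.
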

\begin{proof}[Proof of Lemma \ref{lemma_optima_bounds_specified_terminal_states}]
	\textit{Step 1} \ \textbf{A quasi-optimal control}\\
	Let $u_{\mbox{\tiny{opt}}}$ be an optimal control for \eqref{semilinear_internal_2}-\eqref{functional_specified_terminal_states} and let $y_{\mbox{\tiny{opt}}}$ be its corresponding state. To get the desired bound, we introduce a quasi-optimal control $u$ for \cref{semilinear_internal_2}-\cref{functional_specified_terminal_states}, linking $\hat{y}(t_1,\cdot)$ and $y_{\mbox{\tiny{opt}}}\left(t_2,\cdot\right)$.
	The control strategy is the following
	\begin{enumerate}
		\item employ null control for time $t\in [t_1,t_2-T_R]$;
		\item match the final condition by control $w$, for $t\in [t_2-T_R,t_2]$.
	\end{enumerate}
	Let us denote by $y^0$ the solution to the semilinear problem with null control
	\begin{equation}\label{semilinear_internal_4}
	\begin{dcases}
	y^0_t-\Delta y^0+f\left(y^0\right)=0\hspace{2.8 cm} & \mbox{in} \hspace{0.10 cm}(t_1,t_2)\times\Omega\\
	y^0=0  & \mbox{on}\hspace{0.10 cm} (t_1,t_2)\times \partial \Omega\\
	y^0(t_1,x)=\hat{y}(t_1,x)  & \mbox{in}\hspace{0.10 cm}  \Omega.
	\end{dcases}
	\end{equation}
	By Lemma \ref{lemma_controllab_semil}, there exists $w\in \xLinfty((t_2-T_R,t_2)\times \omega)$, steering \cref{semilinear_internal_2} from $y^0(t_2-T_R,\cdot)$ to $\hat{y}\left(t_2,\cdot\right)$ in the time interval $(t_2-T_R,t_2)$, with estimate
	\begin{equation}\label{est_Linf_contr_3}
	\left\|w-\hat{u}\right\|_{\xLinfty\left(\left(t_2-T_R,t_2\right)\times \omega\right)}\leq K\left\|y^0\left(t_2-T_R\right)-\hat{y}\left(t_2-T_R\right)\right\|_{\xLinfty\left(\Omega\right)},
	\end{equation}
	Then, set
	\begin{equation}\label{special_control}
	u\coloneqq\begin{dcases}
	0 \quad &\mbox{in} \ \left(0,t_2-T_R\right)\\
	w \quad &\mbox{in} \ \left(t_2-T_R,t_2\right).
	\end{dcases}
	\end{equation}
	By \cref{est_Linf_contr_3}, we can bound the norm of the control,
	\begin{equation}\label{est_control}
	\left\|u\right\|_{\xLinfty((t_1,t_2)\times \omega)}\leq K\left[\left\|y^0\left(t_2-T_R\right)-\hat{y}\left(t_2-T_R\right)\right\|_{\xLinfty\left(\Omega\right)}+\left\|\hat{u}\right\|_{\xLinfty\left(\left(t_2-T_R,t_2\right)\times \omega\right)}\right].
	\end{equation}
	\textit{Step 2} \ \textbf{Conclusion}\\
	Consider the control $u$ introduced in \cref{special_control} and let $y$ be the solution to \cref{semilinear_internal_2}, with initial datum $y_0$ and control $u$. Then, we have
	\begin{eqnarray}
	\min_{\mathscr{U}_{\mbox{\tiny{ad}}}}J_{t_1,t_2}&\leq&J_{t_1,t_2}(u)\nonumber\\
	&=&\frac12 \int_{t_1}^{t_2}\int_{\omega} |u|^2 dxdt+\frac{\beta}{2}\int_{t_1}^{t_2}\int_{\omega_0} |y-z|^2 dxdt\nonumber\\
	&=&\frac12 \int_{t_2-T_R}^{t_2}\int_{\omega} |w|^2 dxdt+\frac{\beta}{2}\int_{t_1}^{t_2}\int_{\omega_0} |y-z|^2 dxdt\nonumber\\
	&\leq &\frac12 \int_{t_2-T_R}^{t_2}\int_{\omega} |w|^2 dxdt+{\beta}\int_{t_1}^{t_2}\int_{\omega_0} |y|^2 dxdt+{\beta}\int_{t_1}^{t_2}\int_{\omega_0} |z|^2 dxdt\nonumber\\
	&\leq &\frac12 \int_{t_2-T_R}^{t_2}\int_{\omega} |w|^2 dxdt+{\beta}\int_{t_1}^{t_2}\int_{\omega_0} |y|^2 dxdt+K (t_2-t_1)\|z\|_{\xLinfty(\omega_0)}^2\nonumber\\
	&\leq &K\left[\|w\|_{\xLinfty((t_2-T_R,t_2)\times \omega)}^2+(t_2-t_1)\|z\|_{\xLinfty(\omega_0)}^2\right. \nonumber\\
	&\;&\left.+{\beta}\int_{t_1}^{t_2-T_R}\left\|y^0(t,\cdot)\right\|_{\xLtwo\left(\Omega\right)}^2 dt+\left\|y\right\|_{\xLtwo((t_2-T_R,t_2)\times \Omega)}^2\right]\nonumber\\
	&\leq &K\left[\left\|y^0\left(t_2-T_R,\cdot\right)-\hat{y}\left(t_2-T_R,\cdot\right)\right\|_{\xLinfty\left(\Omega\right)}^2+\left\|\hat{u}\right\|_{\xLinfty\left(\left(t_2-T_R,t_2\right)\times \omega\right)}^2\right.\label{weemploy_dissip_1}\\
	&\; &\left.+(t_2-t_1)\|z\|_{\xLinfty(\omega_0)}^2+\left\|\hat{y}(t_1,\cdot)\right\|_{\xLinfty\left(\Omega\right)}^2\right]\nonumber\\
	&\leq &K\left[\|\hat{y}(t_1,\cdot)\|_{\xLinfty\left(\Omega\right)}^2+(t_2-t_1)\|z\|_{\xLinfty(\omega_0)}^2\right.\label{weemploy_dissip_2}\\
	&\;&\left.+\|\hat{u}\|_{\xLinfty((t_2-T_R,t_2)\times \omega)}^2+\|\hat{y}(t_2-T_R,\cdot)\|_{\xLinfty\left(\Omega\right)}^2\right],\nonumber\\
	\end{eqnarray}
	where in \eqref{weemploy_dissip_1} and in \eqref{weemploy_dissip_2} we have employed the dissipativity of \cref{semilinear_internal_4}. This concludes the proof.
\end{proof}

\subsection{A mean value result for integrals}
\label{appendixsubsec:A mean value result for integrals}

In the following Lemma we estimate the value of a function at some point, with the value of its integral.

\begin{lemma}\label{lemma_int_point}
	Let $h\in \xLone(c,d)\cap \xCzero(c,d)$, with $-\infty<c<d<+\infty$. Assume $h\geq0$ a.e. in $(c,d)$. Then,
	\begin{enumerate}
		\item there exists $t_c\in \left(c,c+\frac{d-c}{3}\right)$, such that
		\begin{equation*}
		h(t_c)\leq\frac{3}{d-c}\int_c^dhdt;
		\end{equation*}
		\item there exists $t_d\in \left(d-\frac{d-c}{3},d\right)$, such that
		\begin{equation*}
		h(t_d)\leq\frac{3}{d-c}\int_c^dhdt.
		\end{equation*}
	\end{enumerate}
\end{lemma}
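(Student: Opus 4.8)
The plan is a plain averaging estimate, applied separately to the left and right thirds of $(c,d)$. Set $L:=d-c$ and $A:=\frac{3}{L}\int_c^d h\,dt$; note $A\ge 0$ since $h\ge 0$ a.e. First I would record a trivial reduction: because $h$ is continuous on $(c,d)$ and $h\ge 0$ a.e., in fact $h\ge 0$ \emph{everywhere} on $(c,d)$ (a continuity point where $h$ were negative would force $h<0$ on a whole neighbourhood, contradicting $h\ge 0$ a.e.), and hence $\int_I h\,dt\le\int_c^d h\,dt$ for every subinterval $I\subseteq(c,d)$.

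For part (1), I would work on $I_c:=\bigl(c,\,c+\tfrac{L}{3}\bigr)$, of length $L/3$, and argue by contradiction: assume $h(t)>A$ for every $t\in I_c$. Then $h-A$ is continuous and strictly positive on the non-degenerate interval $I_c$, so $\int_{I_c}(h-A)\,dt>0$ — pick any compact subinterval $[\alpha,\beta]\subset I_c$, on which $h-A$ attains a strictly positive minimum, so that $\int_{I_c}(h-A)\ge\int_\alpha^\beta(h-A)>0$, the remaining part of $I_c$ contributing $\ge 0$ since $h-A>0$ throughout $I_c$. This yields $\int_{I_c}h\,dt>A\,|I_c|=\tfrac{L}{3}\cdot\tfrac{3}{L}\int_c^d h\,dt=\int_c^d h\,dt$, contradicting the reduction above. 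Hence some $t_c\in I_c=\bigl(c,\,c+\tfrac{d-c}{3}\bigr)$ satisfies $h(t_c)\le A=\tfrac{3}{d-c}\int_c^d h\,dt$, which is exactly (1). Part (2) is the mirror-image argument on $I_d:=\bigl(d-\tfrac{L}{3},\,d\bigr)$.

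The main — and essentially only — subtlety, a minor one, is that $h$ is assumed continuous merely on the \emph{open} interval, so I would avoid invoking the classical mean-value theorem for integrals on a closed subinterval; the contradiction formulation (equivalently, $\inf_{I_c}h\le|I_c|^{-1}\int_{I_c}h\le A$ combined with continuity) handles this cleanly and still delivers a genuine point rather than just an infimum. The fraction $1/3$ plays no role in the proof beyond producing the constant $3$ in the estimate; it is presumably chosen so that $I_c$ and $I_d$ are disjoint, leaving the middle third of $(c,d)$ free for the intended application.
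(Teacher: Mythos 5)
Your proof is correct and follows essentially the same route as the paper: argue by contradiction that $h>\frac{3}{d-c}\int_c^d h$ on the whole first (resp.\ last) third, integrate over that third, and use $h\ge 0$ to bound $\int_{I_c}h$ by $\int_c^d h$. The only difference is that you spell out why the strict pointwise inequality yields a strict inequality of integrals and why $h\ge0$ a.e.\ upgrades to everywhere; the paper takes these for granted.
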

\begin{proof}[Proof of Lemma \ref{lemma_int_point}]
	By contradiction, for any $t\in \left(c,c+\frac{d-c}{3}\right)$, $h(t)>\frac{3}{d-c}\int_c^dhds$. Then, we have
	\begin{equation*}
	\int_{c}^{d}hdt\geq \int_{c}^{c+\frac{d-c}{3}}hdt>\int_{c}^{c+\frac{d-c}{3}}\left[\frac{3}{d-c}\int_c^dhds\right]dt=\int_c^dhds,
	\end{equation*}
	so obtaining a contradiction. The proof of (2.) is similar.
\end{proof}

\subsection{Proof of Lemma \ref{lemma_bound_optima}}
\label{appendixsubsec:Proof of Lemma reflemma_bound_optima}
We are now in position to prove Lemma \ref{lemma_bound_optima}.

\begin{proof}[Proof of Lemma \ref{lemma_bound_optima}]
	\textit{Step 1} \ \textbf{Estimates on subintervals}\\
	Let $T_R$ be given by Lemma \ref{lemma_controllab_semil}.
	
	The case $T\leq 6T_R$ can be addressed by employing the inequality $J_T\left(u^T\right)\leq J_T\left(0\right)$ and bootstrapping in the optimality system \cref{semilinear_internal_parabolic_1}, as in \cite[subsection 3.2]{PZ2}.
	
	We address now the case $T> 6T_R$.
	
	Set $N_T\coloneqq \left\lfloor\frac{T}{3T_R}\right\rfloor$. Arbitrarily fix $\theta >0$, a degree of freedom, to be made precise later. Consider the indexes $i\in \left\{1,\dots,N_T\right\}$, such that
	\begin{equation}\label{theta_smallness}
	\int_{(i-1)3T_R}^{i3T_R}\left[\|q^T(t)\|_{\xLinfty\left(\Omega\right)}^2+\|y^T(t)\|_{\xLinfty\left(\Omega\right)}^2\right]dt\leq \theta \left[\|y_0\|_{\xLinfty\left(\Omega\right)}^2+\|z\|_{\xLinfty(\omega_0)}^2\right].
	\end{equation}
	Set
	\begin{equation}\label{definition_I}
	\mathscr{I}_T\coloneqq \left\{i\in \left\{1,\dots,N_T\right\} \ \bigg| \ \mbox{the estimate}\hspace{0.16 cm}\cref{theta_smallness}\hspace{0.16 cm}\mbox{is not verified}\right\}.
	\end{equation}
	On the one hand, for any $i\in \left\{1,\dots,N_T\right\}\setminus \mathscr{I}_T$, by definition of $\mathscr{I}_T$
	\begin{equation*}
	\int_{(i-1)3T_R}^{i3T_R}\left[\|q^T(t)\|_{\xLinfty\left(\Omega\right)}^2+\|y^T(t)\|_{\xLinfty\left(\Omega\right)}^2\right]dt\leq \theta \left[\|y_0\|_{\xLinfty\left(\Omega\right)}^2+\|z\|_{\xLinfty(\omega_0)}^2\right].
	\end{equation*}
	On the other hand, for every $i\in \mathscr{I}_T$, we seek to prove the existence of a constant $K_{\theta}=K_{\theta}(\Omega,f,R,\theta)$, possibly larger than $\theta$, such that
	\begin{equation}\label{lemma_bound_optima_eq60}
	\int_{(i-1)3T_R}^{i3T_R}\left[\|q^T(t)\|_{\xLinfty\left(\Omega\right)}^2+\|y^T(t)\|_{\xLinfty\left(\Omega\right)}^2\right]dt\leq K_{\theta} \left[\|y_0\|_{\xLinfty\left(\Omega\right)}^2+\|z\|_{\xLinfty(\omega_0)}^2\right].	
	\end{equation}
	We start by considering the union of time intervals, where \cref{theta_smallness} is not verified
	\begin{equation*}
	\mathscr{W}_T\coloneqq \bigcup_{i\in \mathscr{I}_T}[(i-1)3T_R,i3T_R].
	\end{equation*}
	The above set is made of a finite union of disjoint closed intervals, namely there exists a natural $M$ and $\left\{\left(a_j,b_j\right)\right\}_{j=1,\dots,M}$, such that
	\begin{equation*}
	b_j<a_{j+1},\hspace{0.3 cm}j=1,\dots,M-1
	\end{equation*}
	and
	\begin{equation*}
	\mathscr{W}_T=\bigcup_{i\in \mathscr{I}_T}[(i-1)3T_R,i3T_R]=\bigcup_{j=1,\dots,M}[a_j,b_j].
	\end{equation*}
	For any $j=1,\dots,M$, set
	\begin{equation}\label{def_Cj}
	C_j\coloneqq \left\{i\in \mathscr{I}_T \ | \ [(i-1)3T_R,i3T_R]\subseteq[a_j,b_j]\right\}.
	\end{equation}
	We are going to prove \cref{lemma_bound_optima_eq60}, studying the optima in a neighbourhood of $[a_j,b_j]$, for $j=1,\dots,M$. Three different cases may occur:
	\begin{itemize}
		\item \textbf{Case 1.} $a_1=0$ and $b_1<3T_RN_T$, namely the left end of the interval $[a_1,b_1]$ coincides with $t=0$, while the right end is far from $t=T$;
		\item \textbf{Case 2.} $a_j>0$ and $b_j<3T_RN_T$, i.e. the left end of the interval $[a_j,b_j]$ is far from $t=0$ and the right end is far from $t=T$;
		\item \textbf{Case 3.} $a_j>0$ and $b_j=3T_RN_T$, i.e. the left end of the interval $[a_j,b_j]$ is far from $t=0$, while the right end is close to $t=T$.
	\end{itemize}

	\vspace*{4pt}\noindent\textbf{Case 1.} $a_1=0$ and $b_1<3T_RN_T$.
	
	Since $b_1<3T_RN_T$, we have $[b_1,b_1+3T_RN_T]\subseteq [0,T]\setminus \mathscr{W}_T$. Hence, by \cref{definition_I},
	\begin{equation*}
	\int_{b_1}^{b_1+3T_R}\left[\|q^T(t)\|_{\xLinfty\left(\Omega\right)}^2+\|y^T(t)\|_{\xLinfty\left(\Omega\right)}^2\right]dt\leq\theta\left[\|y_0\|_{\xLinfty\left(\Omega\right)}^2+\|z\|_{\xLinfty(\omega_0)}^2\right].
	\end{equation*}
	Set $c\coloneqq b_1$, $d\coloneqq b_1+3T_R$ and $h(t)\coloneqq \|q^T(t)\|_{\xLinfty\left(\Omega\right)}^2+\|y^T(t)\|_{\xLinfty\left(\Omega\right)}^2$. By Lemma \ref{lemma_int_point}, there exist $t_c$ and $t_d$,
	\begin{equation}\label{Case1_tctd}
	b_1<t_c<b_1+T_R\hspace{0.3 cm}\mbox{and}\hspace{0.3 cm}b_1+2T_R<t_d<b_1+3T_R,
	\end{equation}
	such that
	\begin{eqnarray*}
		\|q^T(t_c)\|_{\xLinfty\left(\Omega\right)}^2+\|y^T(t_c)\|_{\xLinfty\left(\Omega\right)}^2&\leq&\frac{1}{T_R}\int_{b_1}^{b_1+3T_R}\left[\|q^T(t)\|_{\xLinfty\left(\Omega\right)}^2+\|y^T(t)\|_{\xLinfty\left(\Omega\right)}^2\right]dt\nonumber\\
		&\leq&\frac{\theta}{T_R}\left[\|y_0\|_{\xLinfty\left(\Omega\right)}^2+\|z\|_{\xLinfty(\omega_0)}^2\right]
	\end{eqnarray*}
	and
	\begin{eqnarray*}
		\|q^T(t_d)\|_{\xLinfty\left(\Omega\right)}^2+\|y^T(t_d)\|_{\xLinfty\left(\Omega\right)}^2&\leq&\frac{1}{T_R}\int_{b_1}^{b_1+3T_R}\left[\|q^T(t)\|_{\xLinfty\left(\Omega\right)}^2+\|y^T(t)\|_{\xLinfty\left(\Omega\right)}^2\right]dt\nonumber\\
		&\leq&\frac{\theta}{T_R}\left[\|y_0\|_{\xLinfty\left(\Omega\right)}^2+\|z\|_{\xLinfty(\omega_0)}^2\right].
	\end{eqnarray*}
	Parabolic regularity
	in the optimality system \cref{semilinear_internal_parabolic_1} in the interval $[t_c,t_d]$ gives
	\begin{eqnarray}\label{Linf_bound_nice_part_case1}
	\left\|y^T\right\|_{\xLinfty((t_c,t_d)\times \Omega)}^2+\left\|q^T\right\|_{\xLinfty((t_c,t_d)\times \Omega)}^2&\leq&K\left\{\|q^T(t_d)\|_{\xLinfty\left(\Omega\right)}^2+\|y^T(t_c)\|_{\xLinfty\left(\Omega\right)}^2+\|z\|_{\xLinfty(\omega_0)}^2\right.\nonumber\\
	&\;&\left.+\int_{b_1}^{b_1+3T_R}\left[\|q^T(t)\|_{\xLinfty\left(\Omega\right)}^2+\|y^T(t)\|_{\xLinfty\left(\Omega\right)}^2\right]dt\right\}\nonumber\\
	&\leq &K_{\theta}\left[\|y_0\|_{\xLinfty\left(\Omega\right)}^2+\|z\|_{\xLinfty(\omega_0)}^2\right].
	\end{eqnarray}
	where the constant $K_{\theta}$ is independent of the time horizon $T$, but it depends on $\theta$. At this point, we want to apply Lemma \ref{lemma_optima_bounds_specified_terminal_states}. To this purpose, we set up a control problem like \cref{semilinear_internal_2}-\cref{functional_specified_terminal_states} with specified final state
	\begin{eqnarray*}
		\hat{y}&\coloneqq&y^T\nonumber\\
		t_1&\coloneqq &0\nonumber\\
		t_2&\coloneqq &t_d.
	\end{eqnarray*}
	Since $t_d> T_R$, assumptions of Lemma \ref{lemma_optima_bounds_specified_terminal_states} are satisfied. Then, by \eqref{lemma_optima_bounds_specified_terminal_states_eq_1} and \eqref{Linf_bound_nice_part_case1},
	\begin{eqnarray}\label{lemma_optima_bounds_specified_terminal_states_eq_1_appl_1}
	\min_{\mathscr{U}_{\mbox{\tiny{ad}}}}J_{t_1,t_2}&\leq& K\left[\|y_0\|_{\xLinfty\left(\Omega\right)}^2+t_d\|z\|_{\xLinfty(\omega_0)}^2\right.\nonumber\\
	&\;&\left.+\|u^T\|_{\xLinfty((t_d-T_R,t_d)\times \omega)}^2+\|y^T(t_d-T_R)\|_{\xLinfty\left(\Omega\right)}^2\right]\nonumber\\
	&\leq&K_{\theta}\left[\|y_0\|_{\xLinfty\left(\Omega\right)}^2+\|z\|_{\xLinfty(\omega_0)}^2\right]+\gamma t_d\|z\|_{\xLinfty(\omega_0)}^2,
	\end{eqnarray}
	where $K_{\theta}=K_{\theta}(\Omega,f,R,\theta)$ and $\gamma=\gamma(\Omega,f,R)$. In our case the target trajectory for \cref{semilinear_internal_2}-\cref{functional_specified_terminal_states} is the state $y^T$ associated to an optimal control $u^T$ for \cref{semilinear_internal_1_slt}-\cref{functional_slt}. Then, by definition of \cref{semilinear_internal_2}-\cref{functional_specified_terminal_states},
	\begin{equation*}
	J_{t_1,t_2}\left(u^T\right)\leq J_{t_1,t_2}(u),\hspace{0.3 cm}\forall \ u\in \mathscr{U}_{\mbox{\tiny{ad}}}.
	\end{equation*}
	Hence, by \eqref{lemma_optima_bounds_specified_terminal_states_eq_1_appl_1},
	\begin{eqnarray}\label{case1_eq16}
	J_{t_1,t_2}\left(u^T\right)&\leq&\min_{\mathscr{U}_{\mbox{\tiny{ad}}}}J_{t_1,t_2}\nonumber\\
	&\leq&K_{\theta}\left[\|y_0\|_{\xLinfty\left(\Omega\right)}^2+\|z\|_{\xLinfty(\omega_0)}^2\right]+\gamma t_d\|z\|_{\xLinfty(\omega_0)}^2.
	\end{eqnarray}
	By definition of $\mathscr{I}_T$ \cref{definition_I} and $C_1$ \cref{def_Cj}, we have
	\begin{eqnarray*}
		\int_{0}^{b_1}\left[\|q^T(t)\|_{\xLinfty\left(\Omega\right)}^2+\|y^T(t)\|_{\xLinfty\left(\Omega\right)}^2\right]dt&\geq&\sum_{i\in C_1}\theta\left[\|y_0\|_{\xLinfty\left(\Omega\right)}^2+\|z\|_{\xLinfty(\omega_0)}^2\right]\nonumber\\
		&=&\frac{\theta b_1}{3T_R}\left[\|y_0\|_{\xLinfty\left(\Omega\right)}^2+\|z\|_{\xLinfty(\omega_0)}^2\right]\nonumber\\
		&>&\frac{\theta (t_d-3T_R)}{3T_R}\left[\|y_0\|_{\xLinfty\left(\Omega\right)}^2+\|z\|_{\xLinfty(\omega_0)}^2\right],
	\end{eqnarray*}
	where in the last inequality we have used \cref{Case1_tctd}, which yields $b_1>t_d-3T_R$. By the above inequality, Lemma \ref{lemma_reg_nonnegativepotential}, \eqref{Linf_bound_nice_part_case1} and \eqref{case1_eq16},
	\begin{eqnarray*}
		\frac{\theta(t_d-3T_R)}{6T_R}\left[\|y_0\|_{\xLinfty\left(\Omega\right)}^2+\|z\|_{\xLinfty(\omega_0)}^2\right]&\;&\nonumber\\
		+\frac12\int_{0}^{b_1}\left[\|q^T(t)\|_{\xLinfty\left(\Omega\right)}^2+\|y^T(t)\|_{\xLinfty\left(\Omega\right)}^2\right]dt&\leq&\int_{0}^{b_1}\left[\|q^T(t)\|_{\xLinfty\left(\Omega\right)}^2+\|y^T(t)\|_{\xLinfty\left(\Omega\right)}^2\right]dt\nonumber\\
		&\leq&K\left[J_{t_1,t_2}\left(u^T\right)+\|y_0\|_{\xLinfty\left(\Omega\right)}^2+\|q^T(t_d)\|_{\xLinfty\left(\Omega\right)}^2\right]\nonumber\\
		&\leq&K_{\theta}\left[\|y_0\|_{\xLinfty\left(\Omega\right)}^2+\|z\|_{\xLinfty(\omega_0)}^2\right]+\gamma t_d\|z\|_{\xLinfty\left(\omega_0\right)}^2,
	\end{eqnarray*}
	whence
	\begin{eqnarray*}
		\int_{0}^{b_1}\left[\|q^T(t)\|_{\xLinfty\left(\Omega\right)}^2+\|y^T(t)\|_{\xLinfty\left(\Omega\right)}^2\right]dt&\leq& K_{\theta}\left[\|y_0\|_{\xLinfty\left(\Omega\right)}^2+\|z\|_{\xLinfty(\omega_0)}^2\right]\nonumber\\
		&\;&+2\left(\gamma t_d-\frac{\theta(t_d-3T_R)}{6T_R}\right) \|z\|_{\xLinfty\left(\omega_0\right)}^2\nonumber\\
		&\leq & K_{\theta}\left[\|y_0\|_{\xLinfty\left(\Omega\right)}^2+\|z\|_{\xLinfty(\omega_0)}^2\right]\nonumber\\
		&\;&+2t_d\left(\gamma -\frac{\theta}{6T_R}\right) \|z\|_{\xLinfty\left(\omega_0\right)}^2.
	\end{eqnarray*}
	If $\theta$ is large enough, we have $\gamma -\frac{\theta}{6T_R}<0$. Hence, choosing $\theta$ large enough, we obtain the estimate
	\begin{equation*}
	\int_{0}^{b_1}\left[\|q^T(t)\|_{\xLinfty\left(\Omega\right)}^2+\|y^T(t)\|_{\xLinfty\left(\Omega\right)}^2\right]dt\leq K_{\theta}\left[\|y_0\|_{\xLinfty\left(\Omega\right)}^2+\|z\|_{\xLinfty(\omega_0)}^2\right].
	\end{equation*}

	\vspace*{4pt}\noindent\textbf{Case 2.} $a_j>0$ and $b_j<3T_RN_T$.
	
	Since $a_j>0$ and $b_j<3T_RN_T$, we have
	\begin{equation}\label{case2_eq1}
	\int_{a_j-3T_R}^{a_j}\left[\|q^T(t)\|_{\xLinfty\left(\Omega\right)}^2+\|y^T(t)\|_{\xLinfty\left(\Omega\right)}^2\right]dt\leq\theta\left[\|y_0\|_{\xLinfty\left(\Omega\right)}^2+\|z\|_{\xLinfty(\omega_0)}^2\right]
	\end{equation}
	and
	\begin{equation}\label{case2_eq2}
	\int_{b_j}^{b_j+3T_R}\left[\|q^T(t)\|_{\xLinfty\left(\Omega\right)}^2+\|y^T(t)\|_{\xLinfty\left(\Omega\right)}^2\right]dt\leq\theta\left[\|y_0\|_{\xLinfty\left(\Omega\right)}^2+\|z\|_{\xLinfty(\omega_0)}^2\right].
	\end{equation}
	In Case 2, we apply Lemma \ref{lemma_int_point}:
	\begin{itemize}
		\item in the interval $[a_j-3T_R,a_j]$;
		\item in the interval $[b_j,b_j+3T_R]$.
	\end{itemize}
	
	We start by applying Lemma \ref{lemma_int_point} in $[a_j-3T_R,a_j]$. To this end, set $c\coloneqq a_j-3T_R$, $d\coloneqq a_j$ and $h(t)\coloneqq \|q^T(t)\|_{\xLinfty\left(\Omega\right)}^2+\|y^T(t)\|_{\xLinfty\left(\Omega\right)}^2$. By Lemma \ref{lemma_int_point}, there exist $t_{a,c}$ and $t_{a,d}$,
	\begin{equation}\label{case2_eq3}
	a_j-3T_R<t_{a,c}<a_j-2T_R\hspace{0.3 cm}\mbox{and}\hspace{0.3 cm}a_j-T_R<t_{a,d}<a_j,
	\end{equation}
	such that
	\begin{eqnarray*}
		\|q^T(t_{a,c})\|_{\xLinfty\left(\Omega\right)}^2+\|y^T(t_{a,c})\|_{\xLinfty\left(\Omega\right)}^2&\leq&\frac{1}{T_R}\int_{a_j-3T_R}^{a_j}\left[\|q^T(t)\|_{\xLinfty\left(\Omega\right)}^2+\|y^T(t)\|_{\xLinfty\left(\Omega\right)}^2\right]dt\nonumber\\
		&\leq&\frac{\theta}{T_R}\left[\|y_0\|_{\xLinfty\left(\Omega\right)}^2+\|z\|_{\xLinfty(\omega_0)}^2\right]
	\end{eqnarray*}
	and
	\begin{eqnarray*}
		\|q^T(t_{a,d})\|_{\xLinfty\left(\Omega\right)}^2+\|y^T(t_{a,d})\|_{\xLinfty\left(\Omega\right)}^2&\leq&\frac{1}{T_R}\int_{a_j-3T_R}^{a_j}\left[\|q^T(t)\|_{\xLinfty\left(\Omega\right)}^2+\|y^T(t)\|_{\xLinfty\left(\Omega\right)}^2\right]dt\nonumber\\
		&\leq&\frac{\theta}{T_R}\left[\|y_0\|_{\xLinfty\left(\Omega\right)}^2+\|z\|_{\xLinfty(\omega_0)}^2\right].
	\end{eqnarray*}
	By parabolic regularity
	in the optimality system \cref{semilinear_internal_parabolic_1} in the interval $[t_{a,c},t_{a,d}]$, we have
	\begin{eqnarray}\label{Linf_bound_nice_part_case2a}
	\left\|y^T\right\|_{\xLinfty((t_{a,c},t_{a,d})\times \Omega)}^2+\left\|q^T\right\|_{\xLinfty((t_{a,c},t_{a,d})\times \Omega)}^2&\leq&K\left\{\|q^T(t_{a,d})\|_{\xLinfty\left(\Omega\right)}^2+\|y^T(t_{a,c})\|_{\xLinfty\left(\Omega\right)}^2\right.\nonumber\\
	&\;&+\|z\|_{\xLinfty(\omega_0)}^2\nonumber\\
	&\;&\left.+\int_{a_j-3T_R}^{a_j}\left[\|q^T(t)\|_{\xLinfty\left(\Omega\right)}^2+\|y^T(t)\|_{\xLinfty\left(\Omega\right)}^2\right]dt\right\}\nonumber\\
	&\leq &K_{\theta}\left[\|y_0\|_{\xLinfty\left(\Omega\right)}^2+\|z\|_{\xLinfty(\omega_0)}^2\right].
	\end{eqnarray}
	where the constant $K_{\theta}$ is independent of the time horizon $T$, but it depends on $\theta$.
	
	We apply Lemma \ref{lemma_int_point} in $[b_j,b_j+3T_R]$. To this extent, set $c\coloneqq b_j$, $d\coloneqq b_j+3T_R$ and $h(t)\coloneqq \|q^T(t)\|_{\xLinfty\left(\Omega\right)}^2+\|y^T(t)\|_{\xLinfty\left(\Omega\right)}^2$. By Lemma \ref{lemma_int_point}, there exist $t_{b,c}$ and $t_{b,d}$,
	\begin{equation}\label{case2_eq6}
	b_j<t_{b,c}<b_j+T_R\hspace{0.3 cm}\mbox{and}\hspace{0.3 cm}b_j+2T_R<t_{b,d}<b_j+3T_R,
	\end{equation}
	such that
	\begin{eqnarray*}
		\|q^T(t_{b,c})\|_{\xLinfty\left(\Omega\right)}^2+\|y^T(t_{b,c})\|_{\xLinfty\left(\Omega\right)}^2&\leq&\frac{1}{T_R}\int_{b_j}^{b_j+3T_R}\left[\|q^T(t)\|_{\xLinfty\left(\Omega\right)}^2+\|y^T(t)\|_{\xLinfty\left(\Omega\right)}^2\right]dt\nonumber\\
		&\leq&\frac{\theta}{T_R}\left[\|y_0\|_{\xLinfty\left(\Omega\right)}^2+\|z\|_{\xLinfty(\omega_0)}^2\right]
	\end{eqnarray*}
	and
	\begin{eqnarray*}
		\|q^T(t_{b,d})\|_{\xLinfty\left(\Omega\right)}^2+\|y^T(t_{b,d})\|_{\xLinfty\left(\Omega\right)}^2&\leq&\frac{1}{T_R}\int_{b_j}^{b_j+3T_R}\left[\|q^T(t)\|_{\xLinfty\left(\Omega\right)}^2+\|y^T(t)\|_{\xLinfty\left(\Omega\right)}^2\right]dt\nonumber\\
		&\leq&\frac{\theta}{T_R}\left[\|y_0\|_{\xLinfty\left(\Omega\right)}^2+\|z\|_{\xLinfty(\omega_0)}^2\right].
	\end{eqnarray*}
	By parabolic regularity
	in the optimality system \cref{semilinear_internal_parabolic_1} in the interval $[t_{b,c},t_{b,d}]$, we have
	\begin{eqnarray}\label{Linf_bound_nice_part_case2b}
	\left\|y^T\right\|_{\xLinfty((t_{b,c},t_{b,d})\times \Omega)}^2+\left\|q^T\right\|_{\xLinfty((t_{b,c},t_{b,d})\times \Omega)}^2&\leq&K\left\{\|q^T(t_{b,d})\|_{\xLinfty\left(\Omega\right)}^2+\|y^T(t_{b,c})\|_{\xLinfty\left(\Omega\right)}^2\right.\nonumber\\
	&\;&+\|z\|_{\xLinfty(\omega_0)}^2+\int_{b_j}^{b_j+3T_R}\|q^T(t)\|_{\xLinfty\left(\Omega\right)}^2dt\nonumber\\
	&\;&\left.+\int_{b_j}^{b_j+3T_R}\|y^T(t)\|_{\xLinfty\left(\Omega\right)}^2dt\right\}\nonumber\\
	&\leq &K_{\theta}\left[\|y_0\|_{\xLinfty\left(\Omega\right)}^2+\|z\|_{\xLinfty(\omega_0)}^2\right].
	\end{eqnarray}
	where the constant $K_{\theta}$ is independent of the time horizon $T$, but it depends on $\theta$.
	
	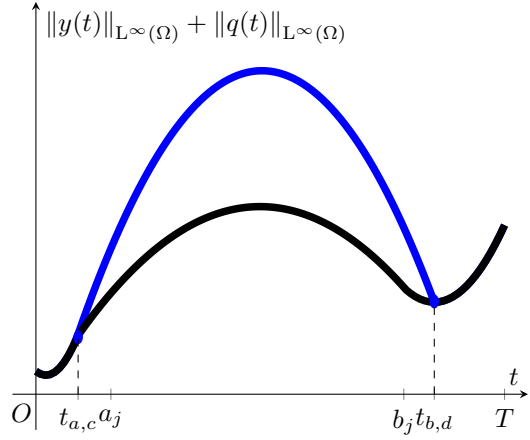
\begin{figure}
		\begin{center}
			\begin{tikzpicture}[
			declare function={
				func(\x)= (\x < 1.76) * (0.6*pow(\x - 1.76, 2) + 1.6*(\x - 1.76) + 1.6)   +
				and(\x >= 1.76, \x < 17) * (-0.12*(\x - 1.76)*(\x - 17) + (0.9701/(17-1.76))*(\x - 1.76) + 1.61)     +
				(\x >= 17) * (0.24*pow(\x-17,2)+2.58)
				;
			}
			]
			\begin{axis}[
			axis x line=middle, axis y line=middle,
			ymin=-1, ymax=11, ytick={0}, yticklabels={}, ylabel=$\left\|y(t)\right\|_{\xLinfty\left(\Omega\right)}+\left\|q(t)\right\|_{\xLinfty\left(\Omega\right)}$,
			xmin=-1, xmax=21, xtick={1.8,3.2,15.7,17,20}, xticklabels={$t_{a,c}$,$a_j$,$b_j$,$t_{b,d}$,$T$}, xlabel=$t$,
			domain=0:20,samples=101, 
			]
			
			{\addplot [blue,line width=0.1cm] {func(x)};}
			{\addplot [black,line width=0.1cm] {(\x < 1.76) * (0.6*pow(\x - 1.76, 2) + 1.6*(\x - 1.76) + 1.6)   +
					and(\x >= 1.76, \x < 15.8) * (-0.06*(\x - 1.76)*(\x - 15.8) + (1.3156/(15.8-1.76))*(\x - 1.76) + 1.61)     +
					(\x >= 15.8) * (0.24*pow(\x-17,2)+2.58};}
			\draw [fill, blue] (28,26) circle [radius=1.8];
			\draw [dashed] (28,10) -- (28,26);
			\draw [fill, blue] (180,36) circle [radius=1.8];
			\draw [dashed] (180,10) -- (180,36);
			\node [below left, black] at (12.3,10) {$O$};
			\end{axis}
			\end{tikzpicture}
			\caption{Case $2$ of the proof of Lemma \ref{lemma_bound_optima}. By controllability, we can link $y^T\left(t_{a,c},\cdot\right)$ and $y^T\left(t_{b,d},\cdot\right)$ by a controlled trajectory. By optimality, the norm of $\left\|y^T(t)\right\|_{\xLinfty\left(\Omega\right)}+\left\|q^T(t)\right\|_{\xLinfty\left(\Omega\right)}$ (in black) is bounded by the corresponding norms (in blue) of the constructed trajectory.}
			\label{Proof of Lemmalemma_bound_optima drawing}
		\end{center}
	\end{figure}
	At this point (see figure \ref{Proof of Lemmalemma_bound_optima drawing}), we want to apply Lemma \ref{lemma_optima_bounds_specified_terminal_states}. To this purpose, we set up a control problem like \cref{semilinear_internal_2}-\cref{functional_specified_terminal_states} with specified final state
	\begin{eqnarray*}
		\hat{y}&\coloneqq&y^T\nonumber\\
		t_1&\coloneqq &t_{a,c}\nonumber\\
		t_2&\coloneqq &t_{b,d}.
	\end{eqnarray*}
	By \eqref{lemma_optima_bounds_specified_terminal_states_eq_1}, \eqref{Linf_bound_nice_part_case2a} and \eqref{Linf_bound_nice_part_case2b},
	\begin{eqnarray}\label{lemma_optima_bounds_specified_terminal_states_eq_1_appl_2}
	\min_{\mathscr{U}_{\mbox{\tiny{ad}}}}J_{t_1,t_2}&\leq& K\left[\|y^T(t_{a,c})\|_{\xLinfty\left(\Omega\right)}^2+(t_{b,d}-t_{a,c})\|z\|_{\xLinfty(\omega_0)}^2\right.,\nonumber\\
	&\;&\left.+\|u^T\|_{\xLinfty((t_{b,d}-T_R,t_{b,d})\times \omega)}^2+\|y^T(t_{b,d}-T_R)\|_{\xLinfty\left(\Omega\right)}^2\right]\nonumber\\
	&\leq&K_{\theta}\left[\|y_0\|_{\xLinfty\left(\Omega\right)}^2+\|z\|_{\xLinfty(\omega_0)}^2\right]+\gamma (t_{b,d}-t_{a,c})\|z\|_{\xLinfty(\omega_0)}^2,
	\end{eqnarray}
	where $K_{\theta}=K_{\theta}(\Omega,f,R,\theta)$ and $\gamma=\gamma(\Omega,f,R)$. In our case the target trajectory for \cref{semilinear_internal_2}-\cref{functional_specified_terminal_states} is the state $y^T$ associated to an optimal control $u^T$ for \cref{semilinear_internal_1_slt}-\cref{functional_slt}. Then, by definition of \cref{semilinear_internal_2}-\cref{functional_specified_terminal_states},
	\begin{equation*}
	J_{t_1,t_2}\left(u^T\right)\leq J_{t_1,t_2}(u),\hspace{0.3 cm}\forall \ u\in \mathscr{U}_{\mbox{\tiny{ad}}}.
	\end{equation*}
	Hence, by \eqref{lemma_optima_bounds_specified_terminal_states_eq_1_appl_2},
	\begin{eqnarray*}
		J_{t_1,t_2}\left(u^T\right)&\leq&\min_{\mathscr{U}_{\mbox{\tiny{ad}}}}J_{t_1,t_2}\nonumber\\
		&\leq&K_{\theta}\left[\|y_0\|_{\xLinfty\left(\Omega\right)}^2+\|z\|_{\xLinfty(\omega_0)}^2\right]+\gamma (t_{b,d}-t_{a,c})\|z\|_{\xLinfty(\omega_0)}^2.
	\end{eqnarray*}
	By definition of $\mathscr{I}_T$ \cref{definition_I} and $C_1$ \cref{def_Cj}, we have
	\begin{eqnarray}\label{case2_eq16}
	\int_{a_j}^{b_j}\left[\|q^T(t)\|_{\xLinfty\left(\Omega\right)}^2+\|y^T(t)\|_{\xLinfty\left(\Omega\right)}^2\right]dt&\geq&\sum_{i\in C_j}\theta\left[\|y_0\|_{\xLinfty\left(\Omega\right)}^2+\|z\|_{\xLinfty(\omega_0)}^2\right]\nonumber\\
	&=&\frac{\theta (b_j-a_j)}{3T_R}\left[\|y_0\|_{\xLinfty\left(\Omega\right)}^2+\|z\|_{\xLinfty(\omega_0)}^2\right]\nonumber\\
	&>&\frac{\theta (t_{b,d}-t_{a,c}-6T_R)}{3T_R}\left[\|y_0\|_{\xLinfty\left(\Omega\right)}^2+\|z\|_{\xLinfty\left(\omega_0\right)}^2\right],
	\end{eqnarray}
	where in the last inequality we have used \cref{case2_eq3} and \cref{case2_eq6} to get\\
	$b_j-a_j>t_{b,d}-t_{a,c}-6T_R$. By the above inequality, Lemma \ref{lemma_reg_nonnegativepotential} and \eqref{case2_eq16},
	\begin{eqnarray*}
		\frac{\theta(t_{b,d}-t_{a,c}-6T_R)}{6T_R}\left[\|y_0\|_{\xLinfty\left(\Omega\right)}^2+\|z\|_{\xLinfty(\omega_0)}^2\right]&\;&\nonumber\\
		+\frac12\int_{a_j}^{b_j}\left[\|q^T(t)\|_{\xLinfty\left(\Omega\right)}^2+\|y^T(t)\|_{\xLinfty\left(\Omega\right)}^2\right]dt&\leq&\int_{a_j}^{b_j}\left[\|q^T(t)\|_{\xLinfty\left(\Omega\right)}^2+\|y^T(t)\|_{\xLinfty\left(\Omega\right)}^2\right]dt\nonumber\\
		&\leq&K\left[J_{t_1,t_2}\left(u^T\right)+\|y_0\|_{\xLinfty\left(\Omega\right)}^2\right]\nonumber\\
		&\leq&K_{\theta}\left[\|y_0\|_{\xLinfty\left(\Omega\right)}^2+\|z\|_{\xLinfty(\omega_0)}^2\right]\nonumber\\
		&\;&+\gamma (t_{b,d}-t_{a,c})\|z\|_{\xLinfty(\omega_0)}^2,
	\end{eqnarray*}
	whence
	\begin{eqnarray*}
		\int_{a_j}^{b_j}\left[\|q^T(t)\|_{\xLinfty\left(\Omega\right)}^2+\|y^T(t)\|_{\xLinfty\left(\Omega\right)}^2\right]dt&\leq& K_{\theta}\left[\|y_0\|_{\xLinfty\left(\Omega\right)}^2+\|z\|_{\xLinfty(\omega_0)}^2\right]\nonumber\\
		&\;&+2\left(\gamma (t_{b,d}-t_{a,c})-\theta\frac{(t_{b,d}-t_{a,c}-6T_R)}{6T_R}\right) \|z\|_{\xLinfty\left(\omega_0\right)}^2\nonumber\\
		&\leq & K_{\theta}\left[\|y_0\|_{\xLinfty\left(\Omega\right)}^2+\|z\|_{\xLinfty(\omega_0)}^2\right]\nonumber\\
		&\;&+2(t_{b,d}-t_{a,c})\left(\gamma -\frac{\theta}{6T_R}\right) \|z\|_{\xLinfty(\omega_0)}^2.
	\end{eqnarray*}
	If $\theta$ is large enough, we have $\gamma -\frac{\theta}{6T_R}<0$. Hence, choosing $\theta$ large enough, we obtain the estimate
	\begin{equation*}
	\int_{a_j}^{b_j}\left[\|q^T(t)\|_{\xLinfty\left(\Omega\right)}^2+\|y^T(t)\|_{\xLinfty\left(\Omega\right)}^2\right]dt\leq K_{\theta}\left[\|y_0\|_{\xLinfty\left(\Omega\right)}^2+\|z\|_{\xLinfty(\omega_0)}^2\right].
	\end{equation*}

	\vspace*{4pt}\noindent\textbf{Case 3.} $a_j>0$ and $b_j=3T_RN_T$.
	
	We now work in case \cref{theta_smallness} is not satisfied in $[a_j,b_j]$, with $b_j=3T_RN_T$. We provide an estimate in the final interval $[a_j,T]$. As we shall see, in this case, we will not employ the exact controllability of \cref{semilinear_internal_1_slt}. We shall rather use the stability of the uncontrolled equation.
	
	Since $a_j>0$, we have
	\begin{equation}\label{case4_eq1}
	\int_{a_j-3T_R}^{a_j}\left[\|q^T(t)\|_{\xLinfty\left(\Omega\right)}^2+\|y^T(t)\|_{\xLinfty\left(\Omega\right)}^2\right]dt\leq\theta\left[\|y_0\|_{\xLinfty\left(\Omega\right)}^2+\|z\|_{\xLinfty(\omega_0)}^2\right].
	\end{equation}
	We apply Lemma \ref{lemma_int_point} in $[a_j-3T_R,a_j]$. To this end, set $c\coloneqq a_j-3T_R$, $d\coloneqq a_j$ and $h(t)\coloneqq \|q^T(t)\|_{\xLinfty\left(\Omega\right)}^2+\|y^T(t)\|_{\xLinfty\left(\Omega\right)}^2$. By Lemma \ref{lemma_int_point}, there exist $t_c$,
	\begin{equation}\label{case4_eq3}
	a_j-3T_R<t_c<a_j-2T_R
	\end{equation}
	such that
	\begin{eqnarray}\label{case4_eq4}
	\|q^T(t_c)\|_{\xLinfty\left(\Omega\right)}^2+\|y^T(t_c)\|_{\xLinfty\left(\Omega\right)}^2&\leq&\frac{1}{T_R}\int_{a_j-3T_R}^{a_j}\left[\|q^T(t)\|_{\xLinfty\left(\Omega\right)}^2+\|y^T(t)\|_{\xLinfty\left(\Omega\right)}^2\right]dt\nonumber\\
	&\leq&\frac{\theta}{T_R}\left[\|y_0\|_{\xLinfty\left(\Omega\right)}^2+\|z\|_{\xLinfty(\omega_0)}^2\right].
	\end{eqnarray}
	
	We introduce the control
	\begin{equation*}
	u^*\coloneqq \begin{dcases}
	u^T \quad &\mbox{in} \ (0,t_c)\\
	0 \quad &\mbox{in} \ (t_c,T)
	\end{dcases}
	\end{equation*}
	Let $y$ be the solution to \cref{semilinear_internal_1_slt}, with initial datum $y_0$ and control $u$ and $y^*$ be the solution to \cref{semilinear_internal_1_slt}, with initial datum $y_0$ and control $u^*$. By definition of minimizer, we have
	\begin{eqnarray*}
		J_{T}\left(u^T\right)&\leq&J_{T}(u^*)\nonumber\\
		&\leq&\frac12 \int_0^T\int_{\omega} |u^*|^2 dxdt+\frac{\beta}{2}\int_0^T\int_{\omega_0} |y^*-z|^2 dxdt\nonumber\\
		&=&\frac12 \int_0^{t_c}\int_{\omega} \left|u^T\right|^2 dxdt+\frac{\beta}{2}\int_0^{t_c}\int_{\omega_0} \left|y^T-z\right|^2 dxdt\nonumber\\
		&\;&+\frac{\beta}{2}\int_{t_c}^{T}\int_{\omega_0} |y^*-z|^2 dxdt,
	\end{eqnarray*}
	whence,
	\begin{eqnarray*}
		\frac12 \int_{t_c}^{T}\int_{\omega} \left|u^T\right|^2 dxdt+\frac{\beta}{2}\int_{t_c}^{T}\int_{\omega_0} \left|y^T-z\right|^2 dxdt&\leq&\frac{\beta}{2}\int_{t_c}^{T}\int_{\omega_0} |y^*-z|^2 dxdt\nonumber\\
		&\leq&K\left[\|y(t_c)\|_{\xLinfty\left(\Omega\right)}^2+(T-t_c)\|z\|_{\xLinfty(\omega_0)}^2\right]\nonumber\\
		&\leq&K_{\theta}\left[\|y_0\|_{\xLinfty\left(\Omega\right)}^2+\|z\|_{\xLinfty(\omega_0)}^2\right]\nonumber\\
		&\;&+\gamma(T-t_c)\|z\|_{\xLinfty(\omega_0)}^2,
	\end{eqnarray*}
	where we have used \eqref{case4_eq4} and $K_{\theta}=K_{\theta}(\Omega,f,R,\theta)$ and $\gamma=\gamma(\Omega,f,R)$.
	
	Now, on the one hand, by Lemma \ref{lemma_reg_nonnegativepotential} applied to the state and the adjoint equation in \cref{semilinear_internal_parabolic_1}, we have
	\begin{eqnarray}\label{Case_4_eq16}
	\int_{t_c}^{T}\left[\|q^T(t)\|_{\xLinfty\left(\Omega\right)}^2+\|y^T(t)\|_{\xLinfty\left(\Omega\right)}^2\right]dt&\leq&K_{\theta}\left[\|y_0\|_{\xLinfty\left(\Omega\right)}^2+\|z\|_{\xLinfty(\omega_0)}^2\right]\nonumber\\
	&\;&+\gamma(T-t_c)\|z\|_{\xLinfty(\omega_0)}^2.
	\end{eqnarray}
	
	On the other hand, by \cref{case4_eq3}, $-a_j>-t_c-3T_R$ and, since $b_j=3T_RN_T$, $b_j\geq T-3T_R$. Hence, $b_j-a_j>T-t_c-6T_R$. Then, by \cref{definition_I},
	\begin{eqnarray*}
		\int_{a_j}^{T}\left[\|q^T(t)\|_{\xLinfty\left(\Omega\right)}^2+\|y^T(t)\|_{\xLinfty\left(\Omega\right)}^2\right]dt&\geq&\int_{a_j}^{b_j}\left[\|q^T(t)\|_{\xLinfty\left(\Omega\right)}^2+\|y^T(t)\|_{\xLinfty\left(\Omega\right)}^2\right]dt\nonumber\\
		&\geq&\sum_{i\in C_j}\theta\left[\|y_0\|_{\xLinfty\left(\Omega\right)}^2+\|z\|_{\xLinfty(\omega_0)}^2\right]\nonumber\\
		&=&\frac{\theta (b_j-a_j)}{3T_R}\left[\|y_0\|_{\xLinfty\left(\Omega\right)}^2+\|z\|_{\xLinfty(\omega_0)}^2\right]\nonumber\\
		&>&\frac{\theta (T-t_{c}-6T_R)}{3T_R}\left[\|y_0\|_{\xLinfty\left(\Omega\right)}^2+\|z\|_{\xLinfty(\omega_0)}^2\right].
	\end{eqnarray*}
	By the above inequality and Lemma \ref{lemma_reg_nonnegativepotential} and \eqref{Case_4_eq16},
	\begin{eqnarray*}
		\frac{\theta (T-t_{c}-6T_R)}{6T_R}\left[\|y_0\|_{\xLinfty\left(\Omega\right)}^2+\|z\|_{\xLinfty(\omega_0)}^2\right]&\;&\nonumber\\
		+\frac12\int_{a_j}^{T}\left[\|q^T(t)\|_{\xLinfty\left(\Omega\right)}^2+\|y^T(t)\|_{\xLinfty\left(\Omega\right)}^2\right]dt&\leq&\int_{a_j}^{T}\left[\|q^T(t)\|_{\xLinfty\left(\Omega\right)}^2+\|y^T(t)\|_{\xLinfty\left(\Omega\right)}^2\right]dt\nonumber\\
		&\leq&K_{\theta}\left[\|y_0\|_{\xLinfty\left(\Omega\right)}^2+\|z\|_{\xLinfty(\omega_0)}^2\right]\nonumber\\
		&\;&+\gamma(T-t_c)\|z\|_{\xLinfty(\omega_0)}^2,
	\end{eqnarray*}
	whence
	\begin{eqnarray*}
		\int_{a_j}^{T}\left[\|q^T(t)\|_{\xLinfty\left(\Omega\right)}^2+\|y^T(t)\|_{\xLinfty\left(\Omega\right)}^2\right]dt&\leq& K_{\theta}\left[\|y_0\|_{\xLinfty\left(\Omega\right)}^2+\|z\|_{\xLinfty(\omega_0)}^2\right]\nonumber\\
		&\;&+2\left(\gamma (T-t_{c})-\theta\frac{(T-t_{c}-6T_R)}{6T_R}\right) \|z\|_{\xLinfty(\omega_0)}^2\nonumber\\
		&\leq & K_{\theta}\left[\|y_0\|_{\xLinfty\left(\Omega\right)}^2+\|z\|_{\xLinfty(\omega_0)}^2\right]\nonumber\\
		&\;&+2(T-t_{c})\left(\gamma -\frac{\theta}{6T_R}\right) \|z\|_{\xLinfty(\omega_0)}^2.
	\end{eqnarray*}
	If $\theta$ is large enough, we have $\gamma -\frac{\theta}{6T_R}<0$. Hence, choosing $\theta$ large enough, we obtain the estimate
	\begin{equation*}
	\int_{a_j}^{T}\left[\|q^T(t)\|_{\xLinfty\left(\Omega\right)}^2+\|y^T(t)\|_{\xLinfty\left(\Omega\right)}^2\right]dt\leq K_{\theta}\left[\|y_0\|_{\xLinfty\left(\Omega\right)}^2+\|z\|_{\xLinfty(\omega_0)}^2\right].
	\end{equation*}
	
	\textit{Step 2} \  \textbf{Conclusion}\\
	The proof is concluded, with an application of Lemma \ref{lemma_subinterval_Linf} to the state and the adjoint equation in \cref{semilinear_internal_parabolic_1}.
\end{proof}

\section{Upper bound for the minimal cost}
\label{appendixsec:Convergence of averages}

This section is devoted to the proof of Lemma \ref{lemma_upper_bound_value}.

\begin{proof}[Proof of Lemma \ref{lemma_upper_bound_value}]
	Let $\overline{u}\in \xLinfty\left(\Omega\right)$ be an optimal control for \cref{semilinear_internal_elliptic_1_slt}-\cref{steady_functional_slt} and let $\overline{y}$ be the corresponding solution to \cref{semilinear_internal_elliptic_1_slt} with control $\overline{u}$. Following step 1 of the proof of Lemma \ref{lemma_opt_est}, we obtain $\overline{u}\in \xCzero(\overline{\omega})$ and
	\begin{equation}\label{lemma_upper_bound_value_eq3}
	\|\overline{u}\|_{\xLinfty\left(\omega\right)}\leq K\|z\|_{\xLinfty\left(\omega_0\right)}.
	\end{equation}
	\textit{Step 1} \ \textbf{Proof of
		\begin{equation*}
		\left|J_T\left(\overline{u}\right)-T\inf_{\xLtwo\left(\omega\right)}J_s\right|\leq K,
		\end{equation*}with $K$ independent of $T$}\\
	Let $\hat{y}$ be the solution to
	\begin{equation}\label{semilinear_internal_3_slt}
	\begin{dcases}
	\hat{y}_t-\Delta \hat{y}+f\left(\hat{y}\right)=\overline{u}\chi_{\omega}\hspace{2.8 cm} & \mbox{in} \hspace{0.10 cm}(0,T)\times\Omega\\
	\hat{y}=0  & \mbox{on}\hspace{0.10 cm} (0,T)\times \partial \Omega\\
	\hat{y}(0,x)=y_0(x)  & \mbox{in}\hspace{0.10 cm}  \Omega.
	\end{dcases}
	\end{equation}
	Set $\eta\coloneqq \hat{y}-\overline{y}$ solution to
	\begin{equation}\label{semilinear_internal_4_slt}
	\begin{dcases}
	\eta_t-\Delta \eta+f\left(\hat{y}\right)-f\left(\overline{y}\right)=0\hspace{2.8 cm} & \mbox{in} \hspace{0.10 cm}(0,T)\times\Omega\\
	\eta=0  & \mbox{on}\hspace{0.10 cm} (0,T)\times \partial \Omega\\
	\eta(0,x)=y_0(x)-\overline{y}(x)  & \mbox{in}\hspace{0.10 cm}  \Omega.
	\end{dcases}
	\end{equation}	
	By multiplying \cref{semilinear_internal_4_slt} by $\eta$, since $f$ is increasing, for any $t\in [0,T]$ we have
	\begin{equation}\label{lemma_upper_bound_value_eq_15}
	\left\|\hat{y}(t,\cdot)-\overline{y}\right\|_{\xLtwo\left(\Omega\right)}\leq \exp(-\lambda_1 t)\left\|y_{0}-\overline{y}\right\|_{\xLtwo\left(\Omega\right)},
	\end{equation}
	where $\lambda_1$ is the first eigenvalue of $-\Delta :\xHone_0\left(\Omega\right)\longrightarrow \xHmone\left(\Omega\right)$.
	
	At this point, let us take the difference
	\begin{eqnarray}\label{lemma_upper_bound_value_eq_16}
	|J_T\left(\overline{u}\right)-T\inf_{\xLtwo\left(\omega\right)}J_s|&=&\frac12\left|\int_0^T\int_{\omega_0}\left[\left|\hat{y}-z\right|^2-\left|\overline{y}-z\right|^2\right]dxdt\right|\nonumber\\
	&\leq &\frac12\int_0^T\int_{\omega_0}\left|\hat{y}-\overline{y}\right|^2 dxdt+\int_{0}^T\int_{\omega_0}\left|\overline{y}-z\right|\left|\hat{y}-\overline{y}\right|dxdt\nonumber\\
	&\leq &K\left\|y_{0}-\overline{y}\right\|_{\xLtwo\left(\Omega\right)}^2+K\left\|y_{0}-\overline{y}\right\|_{\xLtwo\left(\Omega\right)}\leq K,\label{lemma_upper_bound_value_eq_16_1}\\
	\end{eqnarray}
	where in \eqref{lemma_upper_bound_value_eq_16_1} we have used \cref{lemma_upper_bound_value_eq_15} and \cref{lemma_upper_bound_value_eq3} and the constant $K$ is independent of the time horizon $T$.\\
	\textit{Step 2} \ \textbf{Conclusion}\\
	By the above reasoning, we have
	\begin{eqnarray*}
		\inf_{\xLtwo((0,T)\times \omega)}J_{T}&\leq&J_{T}\left(\overline{u}\right)\nonumber\\
		&=&T\inf_{\xLtwo\left(\omega\right)}J_s+J_{T}\left(\overline{u}\right)-T\inf_{\xLtwo\left(\omega\right)}J_s\nonumber\\
		&\leq &T\inf_{\xLtwo\left(\omega\right)}J_s+K.
	\end{eqnarray*}
	This finishes the proof.
\end{proof}
	
	\bibliography{my_references}

\begin{thebibliography}{10}

\bibitem{allaire2010long}
{\sc G.~Allaire, A.~M{\"u}nch, and F.~Periago}, {\em Long time behavior of a
  two-phase optimal design for the heat equation}, SIAM Journal on Control and
  Optimization, 48 (2010), pp.~5333--5356.

\bibitem{anderson1987optimal}
{\sc B.~D. Anderson and P.~V. Kokotovic}, {\em Optimal control problems over
  large time intervals}, Automatica, 23 (1987), pp.~355--363.

\bibitem{DTA}
{\sc S.~Ani{\c{t}}a and D.~Tataru}, {\em Null controllability for the
  dissipative semilinear heat equation}, Applied Mathematics \& Optimization,
  46 (2002), pp.~97--105.

\bibitem{barbu2010nonlinear}
{\sc V.~Barbu}, {\em Nonlinear differential equations of monotone types in
  Banach spaces}, Springer Science \& Business Media, 2010.

\bibitem{boccardo2013elliptic}
{\sc L.~Boccardo and G.~Croce}, {\em Elliptic Partial Differential Equations:
  Existence and Regularity of Distributional Solutions}, De Gruyter Studies in
  Mathematics, De Gruyter, 2013.

\bibitem{BFP}
{\sc H.~Brezis}, {\em Functional Analysis, Sobolev Spaces and Partial
  Differential Equations}, Universitext, Springer New York, 2010.

\bibitem{cannarsa2004semiconcave}
{\sc P.~Cannarsa and C.~Sinestrari}, {\em Semiconcave functions,
  Hamilton-Jacobi equations, and optimal control}, vol.~58, Springer Science \&
  Business Media, 2004.

\bibitem{cardaliaguet2012long}
{\sc P.~Cardaliaguet, J.-M. Lasry, P.-L. Lions, and A.~Porretta}, {\em Long
  time average of mean field games.}, Networks \& Heterogeneous Media, 7
  (2012).

\bibitem{cardaliaguet2013long}
{\sc P.~Cardaliaguet, J.-M. Lasry, P.-L. Lions, and A.~Porretta}, {\em Long
  time average of mean field games with a nonlocal coupling}, SIAM Journal on
  Control and Optimization, 51 (2013), pp.~3558--3591.

\bibitem{carlson2012infinite}
{\sc D.~Carlson, A.~Haurie, and A.~Leizarowitz}, {\em Infinite Horizon Optimal
  Control: Deterministic and Stochastic Systems}, Springer Berlin Heidelberg,
  2012.

\bibitem{casas1995optimal}
{\sc E.~Casas, L.~A. Fernandez, and J.~Yong}, {\em Optimal control of
  quasilinear parabolic equations}, Proceedings of the Royal Society of
  Edinburgh Section A: Mathematics, 125 (1995), pp.~545--565.

\bibitem{ESC}
{\sc E.~Casas and M.~Mateos}, {\em Optimal Control of Partial Differential
  Equations}, Springer International Publishing, Cham, 2017, pp.~3--59.

\bibitem{casas2012second}
{\sc E.~Casas and F.~Tr{\"o}ltzsch}, {\em Second order analysis for optimal
  control problems: improving results expected from abstract theory}, SIAM
  Journal on Optimization, 22 (2012), pp.~261--279.

\bibitem{damm2014exponential}
{\sc T.~Damm, L.~Gr{\"u}ne, M.~Stieler, and K.~Worthmann}, {\em An exponential
  turnpike theorem for dissipative discrete time optimal control problems},
  SIAM Journal on Control and Optimization, 52 (2014), pp.~1935--1957.

\bibitem{Samuelson1}
{\sc R.~Dorfman, P.~Samuelson, and R.~Solow}, {\em Linear Programming and
  Economic Analysis}, Dover Books on Advanced Mathematics, Dover Publications,
  1958.

\bibitem{ISV}
{\sc J.~Droniou}, {\em Int{\'e}gration et Espaces de Sobolev {\`a} Valeurs
  Vectorielles.}, Universit\'e de Provence.

\bibitem{THJ}
{\sc C.~Esteve, D.~Pighin, H.~Kouhkouh, and E.~Zuazua}, {\em The turnpike
  property and the long-time behavior of the hamilton-jacobi equation}.

\bibitem{PDE}
{\sc L.~C. Evans}, {\em Partial differential equations}, vol.~19 of Graduate
  Studies in Mathematics, American Mathematical Society, Providence, RI,
  second~ed., 2010.

\bibitem{faulwasser2019towards}
{\sc T.~Faulwasser, K.~Fla{\ss}amp, S.~Ober-Bl{\"o}baum, and K.~Worthmann},
  {\em Towards velocity turnpikes in optimal control of mechanical systems},
  IFAC-PapersOnLine, 52 (2019), pp.~490--495.
\newblock In Proc. {\em 11th IFAC Symposium on Nonlinear Control Systems,
  NOLCOS 2019}.

\bibitem{EFR}
{\sc E.~Fern{\'a}ndez-Cara and E.~Zuazua}, {\em Null and approximate
  controllability for weakly blowing up semilinear heat equations}, Annales de
  l'Institut Henri Poincare (C) Non Linear Analysis, 17 (2000), pp.~583 -- 616.

\bibitem{grune2018turnpike}
{\sc L.~Gr{\"u}ne and R.~Guglielmi}, {\em Turnpike properties and strict
  dissipativity for discrete time linear quadratic optimal control problems},
  SIAM Journal on Control and Optimization, 56 (2018), pp.~1282--1302.

\bibitem{grune2016relation}
{\sc L.~Gr{\"u}ne and M.~A. M{\"u}ller}, {\em On the relation between strict
  dissipativity and turnpike properties}, Systems \& Control Letters, 90
  (2016), pp.~45--53.

\bibitem{Grune2018}
{\sc L.~Gr{\"u}ne, S.~Pirkelmann, and M.~Stieler}, {\em Strict dissipativity
  implies turnpike behavior for time-varying discrete time optimal control
  problems}, in Control Systems and Mathematical Methods in Economics: Essays
  in Honor of Vladimir M. Veliov, Springer, 2018, pp.~195--218.

\bibitem{grune2019sensitivity}
{\sc L.~Gr{\"u}ne, M.~Schaller, and A.~Schiela}, {\em Sensitivity analysis of
  optimal control for a class of parabolic pdes motivated by model predictive
  control}, SIAM Journal on Control and Optimization, 57 (2019),
  pp.~2753--2774.

\bibitem{gruneexponential}
{\sc L.~Grüne, M.~Schaller, and A.~Schiela}, {\em Exponential sensitivity and
  turnpike analysis for linear quadratic optimal control of general evolution
  equations}, Journal of Differential Equations,  (2019).

\bibitem{haurie1976optimal}
{\sc A.~Haurie}, {\em Optimal control on an infinite time horizon: the turnpike
  approach}, Journal of Mathematical Economics, 3 (1976), pp.~81--102.

\bibitem{hernandez2017greedy}
{\sc V.~Hern{\'a}ndez-Santamar{\'i}a, M.~Lazar, and E.~Zuazua}, {\em Greedy
  optimal control for elliptic problems and its application to turnpike
  problems}, Numerische Mathematik, 141 (2019), pp.~455--493.

\bibitem{ibanez2017optimal}
{\sc A.~Iba{\~n}ez}, {\em Optimal control of the lotka--volterra system:
  turnpike property and numerical simulations}, Journal of biological dynamics,
  11 (2017), pp.~25--41.

\bibitem{kouhkouh2018dynamic}
{\sc H.~Kouhkouh, E.~Zuazua, P.~Carpentier, and F.~Santambrogio}, {\em Dynamic
  programming interpretation of turnpike and hamilton-jacobi-bellman equation},
   (2018).
\newblock Available online: http://bit.ly/2R7soRx.

\bibitem{PEL}
{\sc O.~A. Lady\v{z}enskaja, V.~A. Solonnikov, and N.~N. Ural'ceva}, {\em
  Linear and quasilinear equations of parabolic type}, Translated from the
  Russian by S. Smith. Translations of Mathematical Monographs, Vol. 23,
  American Mathematical Society, Providence, R.I., 1968.

\bibitem{lieberman1996second}
{\sc G.~Lieberman}, {\em Second Order Parabolic Differential Equations}, World
  Scientific, 1996.

\bibitem{LS}
{\sc N.~Liviatan and P.~A. Samuelson}, {\em Notes on turnpikes: Stable and
  unstable}, Journal of Economic Theory, 1 (1969), pp.~454 -- 475.

\bibitem{mckenzie1963turnpike}
{\sc L.~W. McKenzie}, {\em Turnpike theorems for a generalized leontief model},
  Econometrica: Journal of the Econometric Society,  (1963), pp.~165--180.

\bibitem{mckenzie1976turnpike}
\leavevmode\vrule height 2pt depth -1.6pt width 23pt, {\em Turnpike theory},
  Econometrica: Journal of the Econometric Society,  (1976), pp.~841--865.

\bibitem{LIO}
{\sc S.~Mitter and J.~Lions}, {\em Optimal Control of Systems Governed by
  Partial Differential Equations}, Grundlehren der mathematischen
  Wissenschaften, Springer Berlin Heidelberg, 1971.

\bibitem{pighin2020nonuniqueness}
{\sc D.~Pighin}, {\em Nonuniqueness of minimizers for semilinear optimal
  control problems}, arXiv preprint arXiv:2002.04485,  (2020).

\bibitem{TLA}
{\sc D.~Pighin and N.~Sakamoto}, {\em The turnpike with lack of observability},
  arXiv preprint arXiv:2007.14081,  (2020).

\bibitem{GPT}
{\sc A.~Porretta}, {\em On the turnpike property for mean field games}, MINIMAX
  THEORY AND ITS APPLICATIONS, 3 (2018), pp.~285--312.

\bibitem{porretta2013long}
{\sc A.~Porretta and E.~Zuazua}, {\em Long time versus steady state optimal
  control}, SIAM J. Control Optim., 51 (2013), pp.~4242--4273.

\bibitem{PZ2}
\leavevmode\vrule height 2pt depth -1.6pt width 23pt, {\em Remarks on long time
  versus steady state optimal control}, in Mathematical Paradigms of Climate
  Science, Springer, 2016, pp.~67--89.

\bibitem{MPD}
{\sc M.~Protter and H.~Weinberger}, {\em Maximum Principles in Differential
  Equations}, Springer New York, 2012.

\bibitem{rapaport2004turnpike}
{\sc A.~Rapaport and P.~Cartigny}, {\em Turnpike theorems by a value function
  approach}, ESAIM: control, optimisation and calculus of variations, 10
  (2004), pp.~123--141.

\bibitem{rapaport2005competition}
\leavevmode\vrule height 2pt depth -1.6pt width 23pt, {\em Competition between
  most rapid approach paths: necessary and sufficient conditions}, Journal of
  optimization theory and applications, 124 (2005), pp.~1--27.

\bibitem{raymond1999hamiltonian}
{\sc J.~P. Raymond and H.~Zidani}, {\em Hamiltonian pontryagin's principles for
  control problems governed by semilinear parabolic equations}, Applied
  Mathematics and Optimization, 39 (1999), pp.~143--177.

\bibitem{rockafellar1973saddle}
{\sc R.~Rockafellar}, {\em Saddle points of hamiltonian systems in convex
  problems of lagrange}, Journal of Optimization Theory and Applications, 12
  (1973), pp.~367--390.

\bibitem{RMI}
{\sc H.~Royden and P.~Fitzpatrick}, {\em Real Analysis (Classic Version)}, Math
  Classics, Pearson Education, 2017.

\bibitem{Samuelson1972}
{\sc P.~A. Samuelson}, {\em The general saddlepoint property of optimal-control
  motions}, Journal of Economic Theory, 5 (1972), pp.~102 -- 120.

\bibitem{Simon1986}
{\sc J.~Simon}, {\em Compact sets in the space $\mbox{L}^p(0,\mbox{T};
  \mbox{B})$}, Annali di Matematica Pura ed Applicata,  (1986).

\bibitem{trelat2018integral}
{\sc E.~Tr{\'e}lat and C.~Zhang}, {\em Integral and measure-turnpike properties
  for infinite-dimensional optimal control systems}, Mathematics of Control,
  Signals, and Systems, 30 (2018), p.~3.

\bibitem{trelat2018steady}
{\sc E.~Tr{\'e}lat, C.~Zhang, and E.~Zuazua}, {\em Steady-state and periodic
  exponential turnpike property for optimal control problems in hilbert
  spaces}, SIAM Journal on Control and Optimization, 56 (2018), pp.~1222--1252.

\bibitem{trelat2015turnpike}
{\sc E.~Tr{\'e}lat and E.~Zuazua}, {\em The turnpike property in
  finite-dimensional nonlinear optimal control}, Journal of Differential
  Equations, 258 (2015), pp.~81--114.

\bibitem{troltzsch2010optimal}
{\sc F.~Tr{\"o}ltzsch}, {\em Optimal Control of Partial Differential Equations:
  Theory, Methods, and Applications}, Graduate studies in mathematics.

\bibitem{von1945model}
{\sc J.~Von~Neumann}, {\em A model of general economic equilibrium', review of
  economic studies, xiii, 1-9 (translation ofueber ein oekonomisches
  gleichungssystem und eine verallgemeinerung des brouwerschen fixpunksatzes',
  ergebnisse eines mathematischen kolloquiums, 1937, 8, 73-83)}, The Review of
  Economic Studies, 67 (1945), pp.~76--84.

\bibitem{wilde1972dichotomy}
{\sc R.~Wilde and P.~Kokotovic}, {\em A dichotomy in linear control theory},
  IEEE Transactions on Automatic control, 17 (1972), pp.~382--383.

\bibitem{wu2006elliptic}
{\sc Z.~Wu, J.~Yin, and C.~Wang}, {\em Elliptic \& Parabolic Equations}, World
  Scientific, 2006.

\bibitem{zamorano2018turnpike}
{\sc S.~Zamorano}, {\em {Turnpike Property for Two-Dimensional Navier-Stokes
  Equations}}, Journal of Mathematical Fluid Mechanics, 20 (2018),
  pp.~869--888.

\bibitem{zaslavski2006turnpike}
{\sc A.~J. Zaslavski}, {\em Turnpike properties in the calculus of variations
  and optimal control}, vol.~80, Springer Science \& Business Media, 2006.

\bibitem{zuazua2017large}
{\sc E.~Zuazua}, {\em Large time control and turnpike properties for wave
  equations}, Annual Reviews in Control,  (2017).

\end{thebibliography}
	\bibliographystyle{siam}
	
\end{document}